\documentclass[11pt]{amsart}

\usepackage[utf8]{inputenc}
\usepackage[T1]{fontenc}
\usepackage{lmodern}
\usepackage{amsmath, amssymb, mathtools, amscd}
\usepackage{stmaryrd}
\usepackage{bbm}
\usepackage{tabularx}
\usepackage{longtable}
\usepackage{enumitem}
\usepackage[dvipsnames]{xcolor}
\usepackage{tikz}
\usetikzlibrary{decorations.markings}
\usepackage[all]{xy}
\usepackage{adjustbox}
\usepackage{graphicx}
\usepackage{textcomp}
\usepackage[normalem]{ulem}
\usepackage{cancel}
\usepackage{scalerel}

\DeclareFontFamily{U}{mathx}{}
\DeclareFontShape{U}{mathx}{m}{n}{<-> mathx10}{}
\DeclareSymbolFont{mathx}{U}{mathx}{m}{n}
\DeclareMathAccent{\widehat}{0}{mathx}{"70}
\DeclareMathAccent{\widecheck}{0}{mathx}{"71}
\DeclareMathAccent{\widebar}{0}{mathx}{"73}

\theoremstyle{plain}
\newtheorem{theorem}{Theorem}[subsection]
\newtheorem{lemma}[theorem]{Lemma}
\newtheorem{proposition}[theorem]{Proposition}
\newtheorem{corollary}[theorem]{Corollary}

\theoremstyle{definition}
\newtheorem{definition}[theorem]{Definition}

\newtheorem{remark}[theorem]{Remark}

\newtheorem{construction}[theorem]{Construction}

\makeatletter
\@namedef{subjclassname@1991}{2020 Mathematics Subject Classification}
\makeatother

\usepackage{hyperref}

\makeatletter
\renewcommand{\subsection}{\@startsection{subsection}{2}%
  \z@{.5\linespacing\@plus.7\linespacing}{.5\linespacing}%
  {\normalfont\bfseries}}
\makeatother
\hypersetup{colorlinks=true,linkcolor=blue,citecolor=red,urlcolor=cyan,
pdfauthor={Zhangqi Pan},pdftitle={A Note on n-Representations of Simplicial Groups}}
\title[A Note on $n$-Representations]{A Note on $n$-Representations of Simplicial Groups}
\author{Zhangqi Pan}
\address{School of Mathematics and Statistics\\Huazhong University of Science and Technology}
\email{2434271783@qq.com}
\date{\today}
\begin{document}
\nocite{Kerodon,HTT,HA,SchwedeShipley,GoerssJardine,Arakawa2023,Weibel,Kelly1982,Emir2021,BaezLauda2004,BaezCrans2004,HerediaElgueta2016,Huan2026,Elgueta2007,Elgueta2011,AriasAbadCrainic2013,MutluPorter1998,ThomasExtended,Ngompe2024,HatcherSpectralSequences}
\begin{abstract}
For $n\ge1$, we study representations of a simplicial group $G$ in the
full $\infty$-subcategory of simplicial vector spaces whose normalized
homology is concentrated in degrees $0,\ldots,n-1$, retaining its full
mapping spaces. The resulting functor category depends only on the
$n$-truncated homotopy type of $\mathbf BG$ and admits projectively
bifibrant strict diagram and simplicial group-algebra module models.
If restriction along a specified map $f:G\to H$ is an equivalence,
then $\pi_0(f)$ is a group isomorphism; the proof uses ordinary extension
and restriction of scalars. Yet for every $n\ge2$ and prime $q$
invertible in $k$, the nontrivial source $K(\mathbb Z/q,n-1)$ has the
same $n$-representation category with vertex evaluation as the trivial
group. The group-algebra augmentation is a weak equivalence, so the
constant functor is also an equivalence for untruncated representations.
Thus $n$-representations depend only on the $n$-truncation of
$\mathbf BG$, but need not determine that truncation, even together
with vertex evaluation.
\end{abstract}
\keywords{simplicial group, higher representation, Dold--Kan correspondence, source truncation, coefficient obstruction}
\subjclass[2020]{18N50, 55U10, 20C99}
\maketitle
\setcounter{tocdepth}{1}
\tableofcontents
\section{Introduction}\label{sec:introduction}

Representations of a higher group depend on the choice of a linear
target as well as on the homotopy type of the source. We study this
dependence for simplicial groups acting on simplicial vector spaces of
finite homological height. Two questions guide the discussion: which
maps of sources induce equivalences of representation categories, and
what such an equivalence implies about the source homotopy groups.

Fix a field $k$. Let $\mathcal V^\infty$ be the $\infty$-category presented by simplicial
$k$-vector spaces. For $n\ge1$, let
\[
\mathbf{Vect}^{(n)}_k\subseteq\mathcal V^\infty
\]
be the full subcategory on objects whose Dold--Kan normalized homology
is concentrated in degrees $0,\ldots,n-1$.
We retain its full mapping spaces; Proposition~\ref{prop:n-linear-category-height}
shows that they are already $(n-1)$-truncated. Write $\mathbf BG$ for
the one-vertex classifying space of a simplicial group $G$. We define
\[
\mathbf{Rep}^{(n)}_k(G):=
\operatorname{Fun}(\mathbf BG,\mathbf{Vect}^{(n)}_k).
\]
This definition applies to every simplicial group $G$.
Evaluation at the unique vertex of $\mathbf BG$ is the forgetful functor
to $\mathbf{Vect}^{(n)}_k$: it returns the underlying coefficient object.
For $n=1$, this representation $\infty$-category is equivalent to the
nerve of the ordinary representation category of $\pi_0G$ on all
$k$-vector spaces, compatibly with the forgetful functors.
For $n=2$, coefficient objects admit two-term chain-complex models
$V_1\to V_0$ up to quasi-isomorphism.

For this target, source truncation and source detection have different
answers. Representations depend only on the $n$-truncated homotopy type
of $\mathbf BG$. Conversely, an equivalence induced by a specified
group map forces an isomorphism on $\pi_0$, but need not force one on
$\pi_{n-1}$ when $n\ge2$. We call the latter phenomenon a
\emph{coefficient obstruction}: a coefficient-dependent obstruction
to detecting $\pi_{n-1}$. In the examples considered here, the
obstruction already occurs for untruncated representations. The finite
height of the target and the choice of coefficient field thus impose
distinct restrictions on source detection.

\subsection{Main results}\label{sec:main-results}
\begin{itemize}
\item \textbf{Source invariance and algebraic models.}
Theorems~\ref{thm:source-truncation-invariance} and~\ref{thm:A1}
show that $\mathbf{Rep}^{(n)}_k(G)$ depends only on the $n$-truncated
homotopy type of $\mathbf BG$. In particular, if a specified map
$f:G\to H$ induces isomorphisms on $\pi_0,\ldots,\pi_{n-1}$, then
\[
f^*:\mathbf{Rep}^{(n)}_k(H)\xrightarrow{\simeq}\mathbf{Rep}^{(n)}_k(G).
\]
Section~\ref{sec:truncated-invariance} also constructs an explicit
simplicial-group model for the $(n-1)$-truncation of the source.

To relate this definition to explicit algebraic data,
Proposition~\ref{prop:n-representation-function-model} and
Corollary~\ref{cor:n-representation-mapping-model} present this
$\infty$-category by the homotopy coherent nerve of the full simplicial
subcategory of projectively bifibrant strict diagrams whose value at the
unique vertex has the prescribed homology range.
The underlying simplicial sets of their enriched Hom objects compute
the mapping spaces in $\mathbf{Rep}^{(n)}_k(G)$.
Theorem~\ref{thm:module-bridge} identifies the strict enriched diagram
category with simplicial $k[G]$-modules and compares their derived models;
Corollary~\ref{cor:n-representation-module-model} states its finite-height
form explicitly.
\item \textbf{Detection and coefficient obstructions.}
Theorem~\ref{thm:A4} proves that, if restriction along the given
group map $f$ is an equivalence, then $\pi_0(f)$ is a group isomorphism.
The proof reduces to all ordinary modules over group algebras and the
extension--restriction adjunction.
For $n=1$, the two directions say that this restriction functor is an
equivalence if and only if $\pi_0(f)$ is an isomorphism.

For positive homotopy degrees, Proposition~\ref{prop:coefficient-acyclic-invisibility}
gives a sufficient condition for a source to have the same
representation category as the trivial group: if
$\widetilde H_i(|G|;k)=0$ for every $i\ge0$, then the specified constant
functor is an equivalence
$\mathcal V^\infty\simeq\mathcal R_k(G)$, inverse to evaluation.
It restricts to the finite-height subcategories at every height.
For every $n\ge2$ and every prime $q$ invertible in $k$,
Corollary~\ref{thm:prime-to-character-invisibility} applies this criterion
to $K(\mathbb Z/q,n-1)$ and gives an equivalence, induced by its map to $*$,
\[
\mathbf{Vect}^{(n)}_k
\xrightarrow{\ \simeq\ }
\mathbf{Rep}^{(n)}_k(K(\mathbb Z/q,n-1))
\]
compatible with evaluation, although the source has nonzero
$\pi_{n-1}\cong\mathbb Z/q$. Thus dependence on the truncated source
does not imply that the representation category, even together with
evaluation, determines its $(n-1)$-st homotopy group.
\end{itemize}

Dold--Kan provides the mapping-space calculation that determines the
height of the coefficient target. Source invariance then follows from
the universal property of homotopy $n$-categories. The algebraic models
use the strictification theorem
\cite[Proposition~4.2.4.4]{HTT}, with the comparison restricted according
to the homology of the evaluated object. For coefficient-acyclic sources,
the augmentation $k[G]\to k$ is a weak equivalence; a natural zigzag of
module weak equivalences identifies constant representations and
evaluation as inverse functors. The module equivalence reflects
\cite[Proposition~4.2.4.9]{HA}: an algebra whose unit is an equivalence
has a module category equivalent to the underlying coefficient category.
We use these comparisons to express source invariance and detection in
both functor and module models, retaining the specified restriction and
evaluation functors.

\subsection{Relation to higher representation theory}
\label{sec:higher-representation-literature}

The closest low-dimensional comparison is with representations in
Baez--Crans $2$-vector spaces. At height two, Dold--Kan models our
coefficient objects by two-term chain complexes $V_1\to V_0$, equivalently
by internal categories in vector spaces \cite{BaezCrans2004}.
Heredia--Elgueta study representations of $2$-groups in these coefficients
\cite{HerediaElgueta2016}. For a $2$-group with finite $\pi_0$ and
$\pi_1$ and a split $k$-linear abelian coefficient category, their
Theorem~4.3 shows, when the
characteristic is zero or coprime to the orders of $\pi_0$ and $\pi_1$, that
the homotopy category of their representation $2$-category is equivalent to
two copies of the representation category of $\pi_0$; it also describes
the $2$-cells between the resulting reduced representations. Their work
thus exhibits a related failure of positive-degree detection. Here we
work with the functor $\infty$-category defined above, and obtain the
evaluation-compatible equivalence for $K(\mathbb Z/q,n-1)$ at every
height $n\ge2$ from an untruncated coefficient-homology criterion.
Section~\ref{sec:kerodon-direct} discusses the two coefficient models.

Other linear targets lead to different representation data.
With the Kapranov--Voevodsky coefficient target
over $\mathbb C$, Elgueta's classification of nonzero
$2$-representations uses $\pi_0$, its module $\pi_1$, and the Postnikov
class in $H^3(\pi_0,\pi_1)$, together with representation data subject
to a cocycle condition \cite{Elgueta2007}. For essentially finite $2$-groups,
his later regular $2$-representation represents the forgetful
$2$-functor under algebraic-closure and characteristic-zero hypotheses
\cite[Theorem~30]{Elgueta2011}. These results use a different linear
target from the two-term complex model above.

Geometric variants also use functors from a delooping. Over a
characteristic-zero field, Huan defines $2$-representations of coherent
Lie $2$-groups by pseudofunctors into the $2$-category of $k$-prelinear
categories and proves invariance under biequivalence
\cite[Definition~2, Remark~2, and Theorem~2]{Huan2026}.
Representations up to homotopy of Lie groupoids use bounded graded
vector bundles over the object manifold, equipped with a square-zero
operator on groupoid cochains \cite{AriasAbadCrainic2013}.
These theories concern categorical or bundle-valued coefficients,
whereas the present paper fixes simplicial $k$-vector-space coefficients.

\subsection{Scope and outline}\label{sec:outline}
We use the finite-height coefficient target as an $\infty$-category.
For $n\ge2$, the inherited levelwise tensor does not preserve its
homology range; Section~\ref{sec:truncated-rep-spaces} gives the explicit
calculation.

Section~\ref{sec:preliminaries} supplies the mapping-space and truncation
tools. Section~\ref{sec:classifying-spaces} defines the representations and
proves source invariance. Section~\ref{sec:rep-module-categories} constructs
strict diagram and module models. Section~\ref{sec:detection} proves
degree-zero detection and the coefficient obstruction.

\section{Homotopical and Enriched Foundations}\label{sec:preliminaries}

This section collects the notation and standard facts used throughout
the paper.
Simplicial groups and their classifying spaces are small in a fixed
universe, while coefficient and
functor categories are formed in a fixed larger universe.

\subsection{Simplicial sets and simplicial groups}\label{sec:simplicial-sets-groups}

Let $\mathsf{sSet}$ denote simplicial sets, equipped throughout with the
Kan--Quillen model structure. Its cofibrations are degreewise
monomorphisms, its fibrations have the right lifting property against
all horn inclusions, and its weak equivalences induce weak homotopy
equivalences on geometric realizations.

\begin{definition}\label{def:simplicial-group}
A simplicial group is a simplicial object
\[
G:\Delta^{\mathrm{op}}\longrightarrow\mathsf{Grp}.
\]
Equivalently, it consists of groups $G_m$ with group-homomorphic face
and degeneracy maps satisfying the simplicial identities.
\end{definition}

Write $\mathsf{sGrp}$ for the category of simplicial groups; weak
equivalences are detected on their underlying simplicial sets.

\begin{proposition}\label{prop:simplicial-group-kan}
For every simplicial group $G$, the underlying simplicial set of $G$ is
a Kan complex.
\end{proposition}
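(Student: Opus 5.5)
We verify the Kan condition directly: every horn $\Lambda^m_k\to G$ extends to an $m$-simplex. This is Moore's classical argument, and we follow it. A horn is a family $y_0,\dots,\widehat{y_k},\dots,y_m\in G_{m-1}$ satisfying the compatibility $d_iy_j=d_{j-1}y_i$ for $i<j$ with $i,j\neq k$. We seek $x\in G_m$ with $d_ix=y_i$ for all $i\neq k$. The group structure enters by letting us correct an approximate filler through multiplication by degenerate simplices. The face maps are homomorphisms, so $d_i(ab)=d_i(a)\,d_i(b)$.

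The construction is inductive and treats the indices below $k$ first. Set $w_0:=s_0y_0$ if $k\neq 0$. Suppose $w_{r-1}$ has been built with $d_iw_{r-1}=y_i$ for all $i<r$ with $i\neq k$. If $r=k$, set $w_r=w_{r-1}$. Otherwise form the correction $u:=s_r\bigl((d_rw_{r-1})^{-1}y_r\bigr)$ and set $w_r:=w_{r-1}u$.

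We must check that $d_iu=e$ for $i<r$. The simplicial identity gives $d_is_r=s_{r-1}d_i$ for $i<r$. It therefore suffices that $d_i(d_rw_{r-1})=d_iy_r$. Now $d_id_r=d_{r-1}d_i$, so the left side is $d_{r-1}y_i$, which equals $d_iy_r$ by the horn compatibility. Since $d_rs_r=\mathrm{id}$, we also get $d_rw_r=y_r$.

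After reaching $r=k-1$, we handle the indices above $k$ by a downward induction from $m$. Set $v:=w\cdot s_{r-1}\bigl((d_rw)^{-1}y_r\bigr)$, using $d_rs_{r-1}=\mathrm{id}$. Degenerate corrections along $s_{r-1}$ do not disturb faces $i<r-1$, and they do not disturb faces $i>r$. Here the identities $d_is_{r-1}=s_{r-1}d_{i-1}$ for $i>r$ combine with the compatibility $d_iy_j=d_{j-1}y_i$. The one face that needs separate care is $i=k$, which is unconstrained and so requires nothing.

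The main obstacle is the bookkeeping: we must show that each correction factor has identity faces at every previously fixed index, including the case $r-1=k$, where index $r-1$ is the missing face. A cleaner way to organize this is to run the standard proof of May, \emph{Simplicial Objects}, Theorem 17.1. Since that proof is standard, we may instead simply cite it, together with \cite[Lemma I.3.4]{GoerssJardine}. In the paper we would cite the standard reference and present the inductive correction only as a sketch.
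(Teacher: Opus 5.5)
Your proposal is correct and matches the paper, which gives no argument of its own and simply cites Moore's theorem \cite[Lemma I.3.4]{GoerssJardine}; your upward correction along $s_r$ for $r<k$, followed by the downward correction along $s_{r-1}$ for $r>k$, is the standard argument behind that citation. One small precision: the downward correction along $s_{r-1}$ keeps the already fixed faces $i<k$ and $i>r$, not all faces $i<r-1$. Faces with $k<i<r-1$ may change, which is harmless because they are fixed at later steps.
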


This is Moore's theorem
(\cite[Lemma I.3.4]{GoerssJardine}; see also
\cite[Proposition~1.2.5.9, Tag~\href{https://kerodon.net/tag/00MG}{00MG}]{Kerodon}).

\begin{definition}[homotopy $n$-groups]\label{def:homotopy-n-group}
For $n\ge1$, a simplicial group $G$ is called a \emph{homotopy
$n$-group} if
\[
\pi_i(G,e)=0\qquad(i\ge n),
\]
where $e$ is the unit vertex.
\end{definition}

\begin{remark}[low-dimensional strict models]
\label{rem:homotopy-n-group-strict-models}
The homotopy-group condition in the preceding definition allows us to
replace $G$ by a weakly equivalent algebraic model. In low dimensions,
ordinary groups and crossed modules provide such models.

For an ordinary group $\Gamma$, the \emph{constant simplicial group}
$\operatorname{const}\Gamma$ has
\[
(\operatorname{const}\Gamma)_m=\Gamma\quad(m\ge0),
\qquad d_i=\operatorname{id}_\Gamma,
\qquad s_i=\operatorname{id}_\Gamma
\]
for every admissible face and degeneracy map. Its underlying simplicial
set is a disjoint union of points indexed by $\Gamma$: its components
form the group $\Gamma$, and its positive-degree homotopy groups vanish.
For any simplicial group $G$, the natural quotient
\[
q:G\longrightarrow\operatorname{const}\pi_0(G)
\]
sends a simplex to the component of any of its vertices; all its vertices
belong to the same component. These maps are group homomorphisms and
commute with faces and degeneracies, and $\pi_0(q)$ is the identity
under the canonical identification of the target's components with
$\pi_0(G)$. If $G$ is a homotopy $1$-group, both source and target have
zero positive-degree homotopy groups, so $q$ induces isomorphisms on all
homotopy groups and is a weak equivalence. Thus
$\operatorname{const}\pi_0(G)$ models the homotopy type of $G$.

For $n=2$, one can choose a natural strict model.  With the Moore convention
$N_mG=\bigcap_{0\leq i<m}\ker d_i$ and boundary $d_m$, there is a natural map
$G\to P_1G$ for which $N_m(P_1G)=1$ when $m\geq2$ and which induces
isomorphisms on $\pi_0$ and $\pi_1$.  Its associated crossed module has
$P=G_0$, $E=N_1G/d_2(N_2G)$, and boundary induced by $d_1$.  The target has
no higher homotopy. If $G$ is a homotopy $2$-group, its higher homotopy
groups also vanish; together with the displayed isomorphisms on $\pi_0$
and $\pi_1$, this makes $G\to P_1G$ a weak equivalence, with the
Moore-length-one condition imposed on the model $P_1G$.
This follows from the truncation--coskeleton
construction of \cite[Sections~2.4 and 2.7--2.8]{ThomasExtended}, after
reversing its Moore face convention.
Recall that a crossed module is a homomorphism $\partial:E\to P$ together with
an action of $P$ on $E$ satisfying
\[
\partial(p\cdot e)=p\partial(e)p^{-1},
\qquad
\partial(e)\cdot f=efe^{-1}.
\]
If $G$ has Moore length one, namely $N_mG=1$ for $m\geq2$, then
\[
N_1G=\ker(d_0:G_1\to G_0),
\qquad
\partial=d_1:N_1G\longrightarrow G_0,
\qquad
g\cdot e=s_0(g)e s_0(g)^{-1}
\]
is a crossed module.  In the reverse construction the groups in degrees zero
and one are $P$ and $E\rtimes P$, with
$d_0(e,p)=p$, $d_1(e,p)=\partial(e)p$, and $s_0(p)=(1,p)$.
These constructions give an equivalence of categories between crossed modules
and Moore-length-one simplicial groups
\cite[Theorem~2.2]{MutluPorter1998}; the displayed Moore convention and
crossed-module formulas agree with
\cite[Section~1, pp.~190--191]{Emir2021}.  A crossed module also determines
a strict $2$-group, viewed as an internal category in groups: its object group
is $P$, its arrow group is $E\rtimes P$, and its source, target, identity,
and composition are
\[
s(e,p)=p,\quad t(e,p)=\partial(e)p,\quad i(p)=(1,p),\quad
(f,\partial(e)p)\circ(e,p)=(fe,p).
\]
Conversely, for an internal category $C$ in groups one takes
$P=C_0$, $E=\ker(s:C_1\to C_0)$, $\partial=t|_E$, and the action given by
conjugation by identity arrows.  These constructions are inverse up to
natural isomorphism \cite[Section~7, pp.~462--463]{BaezLauda2004}.
The order $fe$ in the displayed composition is forced by its target;
the factors appear reversed in the cited printed formula.
\end{remark}

\subsection{Simplicial vector spaces}\label{sec:simplicial-vector-spaces}

Throughout this paper, $k$ denotes a fixed field.

\begin{definition}\label{def:simplicial-vector-space-object}
\label{def:category-svect}
A simplicial $k$-vector space is a functor
$V:\Delta^{\mathrm{op}}\to\mathsf{Vect}_k$, where $\Delta$ is the category
of finite nonempty ordinals and order-preserving maps, and
$\mathsf{Vect}_k$ is the category of $k$-vector spaces. Equivalently,
it consists of $k$-vector spaces $V_n$ for $n\geq0$, linear face maps
$d_i:V_n\to V_{n-1}$ for $n\geq1$ and $0\leq i\leq n$, and linear
degeneracy maps $s_i:V_n\to V_{n+1}$ for $n\geq0$ and $0\leq i\leq n$,
satisfying the simplicial identities. In particular, no face map is
specified in degree zero. The equivalence uses the standard
generators-and-relations presentation of $\Delta$
\cite[Chapter~I, Section~1]{GoerssJardine}.
The category $\mathcal{V}=\operatorname{Fun}
(\Delta^{\mathrm{op}},\mathsf{Vect}_k)$ has these simplicial vector spaces
as objects.
A morphism $f:V\to W$ is a natural transformation, equivalently a
family of $k$-linear maps $f_n:V_n\to W_n$ such that
$f_{n-1}d_i^V=d_i^Wf_n$ for $n\geq1$ and
$f_{n+1}s_i^V=s_i^Wf_n$ for $n\geq0$, for every admissible $i$.
Composition and identities are defined degreewise.
\end{definition}

\begin{definition}\label{def:simplicial-vector-space}
The category $\mathcal{V}$ of simplicial $k$-vector spaces is
symmetric monoidal with the levelwise tensor product
\[
(V\otimes_{\mathcal V}W)_n:=V_n\otimes_k W_n,
\]
and with unit the constant simplicial vector space $\underline{k}$.
The associativity, symmetry, and unit constraints are the levelwise
constraints inherited from $\mathsf{Vect}_k$.
For a simplicial set $K$, write $k[K]$ for the free simplicial
$k$-vector space on $K$: in degree $n$ it has basis $K_n$, and its
simplicial operators extend those of $K$ linearly.

It is closed: the internal Hom is the simplicial $k$-vector space
\[
\underline{\operatorname{Hom}}_{\mathcal{V}}(V,W)_n
:=
\operatorname{Hom}_{\mathcal{V}}\bigl(V\otimes_{\mathcal V}k[\Delta^n],W\bigr),
\]
whose simplicial operators are induced by precomposition along the
cosimplicial maps among the standard simplices $\Delta^n$. The tensor-Hom
adjunction
\[
\operatorname{Hom}_{\mathcal{V}}(X\otimes_{\mathcal V}Y,Z)
\;\cong\;
\operatorname{Hom}_{\mathcal{V}}
\bigl(X,\underline{\operatorname{Hom}}_{\mathcal{V}}(Y,Z)\bigr)
\]
is verified directly as follows.  The transpose of $f:X\otimes Y\to Z$
sends $x\in X_r$ to the map $Y\otimes k[\Delta^r]\to Z$ defined by
\[
y\otimes\theta\longmapsto f_s(\theta^*x\otimes y),
\qquad y\in Y_s,\quad\theta:[s]\to[r],
\]
extended linearly in $\theta$.  Simplicial compatibility follows from
that of $f$.  Conversely, evaluate $g_r(x)$ on
$y\otimes\operatorname{id}_{[r]}$ for
$g:X\to\underline{\operatorname{Hom}}_{\mathcal V}(Y,Z)$.
These constructions are natural and mutually inverse.
The vector-space structure on each displayed Hom set is pointwise; the
induced face and degeneracy operators are linear.
\end{definition}

The category $\mathcal{V}$ has all small limits and colimits, since
it is a functor category valued in the complete and cocomplete
category $\mathsf{Vect}_k$. For every small diagram $D:J\to\mathcal V$,
they are computed levelwise:
\[
(\lim_J D)_n\cong\lim_J(D_j)_n,\qquad
(\operatorname{colim}_J D)_n\cong\operatorname{colim}_J(D_j)_n.
\]

Write $\operatorname{Und}:\mathcal V\to\mathsf{sSet}$ for the functor
forgetting the vector-space structure degreewise. Thus
$\operatorname{Und}(X)_n$ is the underlying set of $X_n$, with the same
face and degeneracy maps regarded as set maps.

\begin{definition}\label{def:underlying-infinity}
Let $\mathcal{V}$ be the category of simplicial $k$-vector spaces of
Definition~\ref{def:category-svect}, with the closed symmetric monoidal
structure of Definition~\ref{def:simplicial-vector-space}.
The internal Hom objects equip $\mathcal V$ with its canonical
$\mathcal V$-enriched category structure, as follows. Within this
definition suppress the subscript on $\otimes_{\mathcal V}$.

\par\smallskip\noindent\emph{Evaluation, composition, and units.}
The evaluation map is the morphism corresponding to
$\operatorname{id}_{\underline{\operatorname{Hom}}_{\mathcal V}(V,W)}$
under the tensor--Hom adjunction:
\[
\operatorname{ev}_{V,W}:
\underline{\operatorname{Hom}}_{\mathcal V}(V,W)\otimes V\longrightarrow W,
\qquad
(\operatorname{ev}_{V,W})_r(a\otimes v)
=a_r(v\otimes\operatorname{id}_{[r]}).
\]
Here $a\in\underline{\operatorname{Hom}}_{\mathcal V}(V,W)_r$
is itself a simplicial linear map
$V\otimes k[\Delta^r]\to W$, and $v\in V_r$.

For three objects $V,W,Z$, enriched composition is the adjoint
\[
m_{V,W,Z}:
\underline{\operatorname{Hom}}_{\mathcal V}(W,Z)\otimes
\underline{\operatorname{Hom}}_{\mathcal V}(V,W)
\longrightarrow\underline{\operatorname{Hom}}_{\mathcal V}(V,Z)
\]
of the composite
\[
\begin{aligned}
\bigl(\underline{\operatorname{Hom}}_{\mathcal V}(W,Z)\otimes
\underline{\operatorname{Hom}}_{\mathcal V}(V,W)\bigr)\otimes V
&\xrightarrow{\ \cong\ }
\underline{\operatorname{Hom}}_{\mathcal V}(W,Z)\otimes
\bigl(\underline{\operatorname{Hom}}_{\mathcal V}(V,W)\otimes V\bigr)\\
&\xrightarrow{\ \operatorname{id}\otimes\operatorname{ev}_{V,W}\ }
\underline{\operatorname{Hom}}_{\mathcal V}(W,Z)\otimes W
\xrightarrow{\ \operatorname{ev}_{W,Z}\ }Z.
\end{aligned}
\]
Explicitly, for
$a\in\underline{\operatorname{Hom}}_{\mathcal V}(V,W)_r$,
$b\in\underline{\operatorname{Hom}}_{\mathcal V}(W,Z)_r$,
$v\in V_s$ and a basis simplex $\theta:[s]\to[r]$,
\[
\bigl(m_{V,W,Z}\bigr)_r(b\otimes a)_s(v\otimes\theta)
=b_s\bigl(a_s(v\otimes\theta)\otimes\theta\bigr),
\]
extended linearly in the input. Both maps use the same parameter
$\theta$, rather than two independent simplices.

The enriched identity is the adjoint of
$\underline{k}\otimes V\cong V\xrightarrow{\operatorname{id}_V}V$:
\[
u_V:\underline{k}\longrightarrow
\underline{\operatorname{Hom}}_{\mathcal V}(V,V),
\qquad
\bigl((u_V)_r(\lambda)\bigr)_s(v\otimes\theta)=\lambda v
\quad(\lambda\in k).
\]
\par\smallskip\noindent\emph{Enriched-category axioms.}
These are morphisms in $\mathcal V$, since evaluation and the adjunction
are simplicial and natural. With the canonical associativity and unit
isomorphisms understood, they satisfy
\[
\begin{aligned}
m_{V,Z,T}\circ\bigl(\operatorname{id}\otimes m_{V,W,Z}\bigr)
&=m_{V,W,T}\circ\bigl(m_{W,Z,T}\otimes\operatorname{id}\bigr),\\
m_{V,W,W}\circ(u_W\otimes\operatorname{id})
&=\operatorname{id}_{\underline{\operatorname{Hom}}_{\mathcal V}(V,W)},\\
m_{V,V,W}\circ(\operatorname{id}\otimes u_V)
&=\operatorname{id}_{\underline{\operatorname{Hom}}_{\mathcal V}(V,W)}.
\end{aligned}
\]
Indeed, the two associative composites have the same value
$c_s\bigl(b_s(a_s(v\otimes\theta)\otimes\theta)\otimes\theta\bigr)$
for $c\in\underline{\operatorname{Hom}}_{\mathcal V}(Z,T)_r$;
the unit equations follow by taking
$\lambda=1$. Thus the enriched structure maps satisfy the displayed
equations in $\mathcal V$. The underlying ordinary category
is the original one, since the adjunction gives
\[
\operatorname{Hom}_{\mathcal V}
\bigl(\underline{k},\underline{\operatorname{Hom}}_{\mathcal V}(V,W)\bigr)
\cong\operatorname{Hom}_{\mathcal V}(V,W).
\]

\par\smallskip\noindent\emph{Mapping spaces and the underlying $\infty$-category.}
For $V,W\in\mathcal{V}$, let
\[
\operatorname{Map}(V,W)
:=
\operatorname{Und}\bigl(\underline{\operatorname{Hom}}_{\mathcal{V}}(V,W)\bigr)
\]
be the underlying simplicial set of the internal Hom. The maps
$(b,a)\mapsto(m_{V,W,Z})_r(b\otimes a)$ and the elements
$(u_V)_r(1)$ give its simplicial composition and identities.
The underlying
$\infty$-category of $\mathcal{V}$ is
\[
\mathcal{V}^{\infty}
:=
N^{\mathrm{hc}}\bigl(\mathcal{V},\operatorname{Map}\bigr).
\]
Propositions~\ref{prop:model-structure-svect},
\ref{prop:strong-cofibrant}, and \ref{prop:simplicial-vector-kan} show that
every object is bifibrant and all mapping spaces are Kan. Hence the
homotopy coherent nerve is an $\infty$-category
\cite[Theorem~2.4.5.1, Tag~\href{https://kerodon.net/tag/00LJ}{00LJ}]{Kerodon}. If $W$ is the class of maps whose
normalized complexes are quasi-isomorphisms, then
\[
N(\mathcal V_{\mathrm{ord}})[W^{-1}]
\simeq \mathcal V^\infty
\]
by \cite[Theorem~1.3.4.20]{HA}, since its cofibrant and bifibrant
subcategories both comprise all objects of $\mathcal V$.
Here and throughout, the symbol $\mathcal{V}$ denotes both the ordinary
category of simplicial $k$-vector spaces and its self-enrichment via
$\underline{\operatorname{Hom}}$; the intended meaning is clear from
context.
\end{definition}

\smallskip
\noindent\textbf{Notation.} We use homological grading, so differentials
lower degree by one. Write $\mathsf{Ch}_{\ge0}(k)$ for the category of
chain complexes of $k$-vector spaces concentrated in nonnegative degrees
(with $d_0:C_0\to C_{-1}=0$), and $\mathsf{Ch}(k)$ for the category of
unbounded chain complexes. Morphisms in both categories are degreewise
linear maps commuting with the differentials. For
$C,D\in\mathsf{Ch}(k)$, $\operatorname{Hom}_{\mathsf{Ch}(k)}(C,D)$
denotes the set of chain maps $C\to D$.

\begin{proposition}\label{prop:model-structure-svect}
Let
\[
N:\mathcal{V}\simeq\mathsf{Ch}_{\ge0}(k)
\]
be the normalized Moore complex functor. Then $\mathcal{V}$ admits
a model structure, transferred from the projective model structure on
$\mathsf{Ch}_{\ge0}(k)$ along $N$, such that a map $f:V\to W$ in
$\mathcal{V}$ is
\begin{enumerate}
\item a weak equivalence if and only if $N(f)$ is a quasi-isomorphism in
$\mathsf{Ch}_{\ge0}(k)$;
\item a fibration if and only if
$N(f)_n:N(V)_n\to N(W)_n$ is surjective for every $n\ge1$;
\item a cofibration if and only if
$N(f)_n:N(V)_n\to N(W)_n$ is injective for every $n\ge0$.
\end{enumerate}
Moreover, every object of $\mathcal{V}$ is fibrant, and this model
structure is combinatorial and simplicial.
\end{proposition}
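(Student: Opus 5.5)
The plan is to transfer the projective model structure on $\mathsf{Ch}_{\ge0}(k)$ across the Dold--Kan equivalence $N:\mathcal V\simeq\mathsf{Ch}_{\ge0}(k)$, whose inverse $\Gamma$ is also a left and right adjoint. Since $N$ and $\Gamma$ form an equivalence of categories, transport is essentially tautological. We declare a map $f$ in $\mathcal V$ to be a weak equivalence, fibration or cofibration exactly when $N(f)$ is one in the projective structure. All the model category axioms then hold because they hold in $\mathsf{Ch}_{\ge0}(k)$, and an equivalence preserves limits, colimits, retracts, lifting problems and factorizations. It remains to recall the standard description of the projective structure on nonnegatively graded complexes over a field, for instance from \cite{GoerssJardine} or \cite{Weibel}: weak equivalences are quasi-isomorphisms, fibrations are maps surjective in degrees $n\ge1$, and cofibrations are degreewise monomorphisms with projective cokernel. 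Over a field every cokernel is projective, so cofibrations are exactly the degreewise injections. This gives (1)--(3).

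Every object is fibrant, because $N(V)\to0$ is surjective in every degree $n\ge1$. The structure is combinatorial because $\mathcal V$ is a presheaf category valued in $\mathsf{Vect}_k$ and hence locally presentable. Cofibrant generation transports along the equivalence: the generating sets are $\Gamma$ applied to the boundary inclusions $S^{n-1}\to D^n$ and to $0\to D^n$.

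The main point requiring care is simpliciality. The simplicial enrichment comes from the tensoring $V\otimes K:=V\otimes_{\mathcal V}k[K]$ for $K\in\mathsf{sSet}$, with mapping spaces $\operatorname{Map}(V,W)$ as in Definition~\ref{def:underlying-infinity}. Cotensors $W^K$ are given by $\underline{\operatorname{Hom}}_{\mathcal V}(k[K],W)$, and the adjunctions follow from the tensor--Hom adjunction of Definition~\ref{def:simplicial-vector-space}. I would verify axiom SM7 in its pushout-product form. Let $i:A\to B$ be a cofibration in $\mathcal V$ and $j:K\to L$ a monomorphism of simplicial sets. We must show that
\[
A\otimes L\cup_{A\otimes K}B\otimes K\to B\otimes L
\]
is a cofibration, and acyclic if either $i$ or $j$ is. Since cofibrations are detected by injectivity of $N$, and $N$ preserves injections and reflects them (being an equivalence), the cofibration claim reduces to levelwise injectivity. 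That is linear algebra: a pushout-product of injections of vector spaces is injective.

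For acyclicity I would use the Eilenberg--Zilber theorem, which gives a natural quasi-isomorphism $N(V\otimes_{\mathcal V}W)\simeq N(V)\otimes N(W)$. This identifies the homology of the pushout-product with the cone of a tensor product of complexes over a field. By the K\"unneth theorem, that cone is acyclic when either factor is. Alternatively, I can cite the standard fact, for instance \cite[Section~II.2]{GoerssJardine}, that simplicial modules form a simplicial model category with exactly these classes.
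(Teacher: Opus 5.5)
Your proposal is correct. It agrees with the paper on the transport along $N$, on the three classes over a field, and on fibrancy. For combinatoriality the two arguments differ trivially: the paper gets local presentability from $\mathsf{Ch}_{\ge0}(k)$ being locally finitely presentable, while you use that $\mathcal V$ is a functor category from a small category into $\mathsf{Vect}_k$.

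The genuine difference is simpliciality. The paper never checks SM7 directly. It takes Goerss--Jardine's standard simplicial model structure on simplicial modules, whose copower is $V\otimes_{\mathcal V}k[K]$. It then notes that this structure's weak equivalences (detected by normalized homology) and fibrations (surjectivity of $N(f)_n$ for $n\ge1$) coincide with the transported ones. Hence the two model structures agree, since cofibrations are then determined by lifting.

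Your main route verifies the pushout-product axiom by hand. Cofibrations are exactly the levelwise monomorphisms, and a pushout-product of injections is levelwise injective. Its cokernel is $\operatorname{coker}(i)\otimes_{\mathcal V}\bigl(k[L]/k[K]\bigr)$. By the Eilenberg--Zilber shuffle quasi-isomorphism, the normalization of this cokernel is quasi-isomorphic to a tensor product of two complexes, which is acyclic by K\"unneth as soon as either factor is. For acyclic $j$ you need that $k[-]$ turns weak equivalences into $k$-homology isomorphisms. What you want here is the tensor product of the two cokernels (or cones), not ``the cone of a tensor product'' as you wrote.

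Your route is self-contained and needs only the shuffle equivalence, which is the same input the paper later uses in Proposition~\ref{prop:enriched-dold-kan-comparison} and Remark~\ref{rem:dold-kan-monoidal}. It also shows exactly where the field hypothesis enters. The paper's route is shorter, but it rests on external results plus the observation that a model structure is determined by its weak equivalences and fibrations. Your fallback citation would need to make that same observation explicit.

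One detail: the generating cofibrations must contain $0\to S^0$. Your $S^{n-1}\to D^n$ covers it only under the convention $S^{-1}=0$, $D^0=S^0$. Without it the set fails to generate, because $0\to S^0$ has the right lifting property against every $S^{n-1}\to D^n$ with $n\ge1$ but is not a quasi-isomorphism.
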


These two properties are used in Section~\ref{sec:function-model} to
construct the projective model structure on strict representations and
to compare it with the homotopy-coherent functor category.

\begin{proof}

For $V\in\mathcal V$, let $C_n(V)=V_n$ with differential
$\partial_n=\sum_{i=0}^n(-1)^i d_i$ when $n\geq1$, and let
$\partial_0:V_0\to0$ be zero. Quotient $C(V)$ by its degenerate
subcomplex, with $D_0(V)=0$ and
$D_n(V)=\sum_{i=0}^{n-1}s_i(V_{n-1})$ for $n\geq1$, to obtain $N(V)$.
This normalized Moore complex gives an equivalence
$N:\mathcal V\simeq\mathsf{Ch}_{\ge0}(k)$ by
\cite[Theorem~2.5.6.1, Tag~\href{https://kerodon.net/tag/00QR}{00QR}]{Kerodon} and
\cite[Remark~2.5.6.2, Tag~\href{https://kerodon.net/tag/00QS}{00QS}]{Kerodon}.

The nonnegative projective model structure on
$\mathsf{Ch}_{\ge0}(k)$ is described in
\cite[Section~5, ``Chain complexes'', p.~503]{SchwedeShipley}.
Its weak equivalences
are quasi-isomorphisms, its fibrations are surjections in positive
degrees, and its cofibrations are degreewise monomorphisms with
degreewise projective cokernel. Since $k$ is a field, every
vector space is projective, so all degreewise monomorphisms are
cofibrations. Every object is fibrant, since its map to zero is
surjective in positive degrees, and every object is cofibrant,
since its map from zero is a degreewise monomorphism.
For later reference, the standard generating sets can be written
explicitly as follows. For $m\ge0$ let $S^m$ denote the complex with $k$
in degree $m$ and zero elsewhere, and for $m\ge1$ let $D^m$ denote the
complex with $k$ in degrees $m$ and $m-1$ and differential
$d_m=\mathrm{id}$; the symbol $0$ denotes the zero complex.
More generally, for a $k$-vector space $M$ and $m\ge0$,
write $S^m M$ for the complex with $M$ in degree $m$ and zero elsewhere,
and for $m\ge1$ write $D^m M$ for the complex with $M$ in degrees $m$
and $m-1$ and identity differential; thus $S^m=S^m k$ and
$D^m=D^m k$.
For $n\geq1$, the map $S^{n-1}\to D^n$ includes the generator in
degree $n-1$. Then
\[
I_{\ge0}:=\{0\to S^0\}\cup\{S^{n-1}\to D^n:n\ge1\},\qquad
J_{\ge0}:=\{0\to D^n:n\ge1\}
\]
are generating cofibrations and generating trivial cofibrations,
respectively: fibrations are exactly the $J_{\ge0}$-injective maps and
trivial fibrations are exactly the $I_{\ge0}$-injective maps
(see the discussion following Corollary~III.2.12 on p.~157 of
\cite{GoerssJardine}, with the free abelian groups replaced by free
$k$-modules). Indeed, lifting against $0\to D^r$ specifies a
preimage of an arbitrary degree-$r$ element; lifting against
$S^{r-1}\to D^r$ specifies a preimage with prescribed boundary.
Together with $0\to S^0$, these give exactly the stated criteria.

Let $W_{\mathsf{Ch}}$, $\mathcal{F}_{\mathsf{Ch}}$, and
$\mathcal{C}_{\mathsf{Ch}}$ denote the classes of weak equivalences,
fibrations, and cofibrations of this model structure on
$\mathsf{Ch}_{\ge0}(k)$. Define a map $f$ in $\mathcal{V}$ to be a
weak equivalence, fibration, or cofibration according as $N(f)$ lies in
$W_{\mathsf{Ch}}$, $\mathcal{F}_{\mathsf{Ch}}$, or
$\mathcal{C}_{\mathsf{Ch}}$. Because $N$ is an equivalence of categories,
these classes satisfy the model category axioms: lifting, factorization,
retracts, and two-out-of-three all transfer along $N$ and a chosen
quasi-inverse. This proves the three characterizations in the
proposition.

The sets $I_{\ge0}$ and $J_{\ge0}$ transfer along a quasi-inverse of
$N$ and generate the transported structure. Moreover,
$\mathsf{Ch}_{\ge0}(k)$ is locally finitely presentable: each complex
is the filtered union of its finite-dimensional subcomplexes supported
in finitely many degrees. Equivalence of categories transports local
presentability, so the model structure on $\mathcal V$ is
combinatorial.
For a simplicial set $K$, the standard simplicial copower is
$V\otimes K:=V\otimes_{\mathcal V}k[K]$; in degree $m$ it is
$\bigoplus_{\sigma\in K_m}V_m$. Its mapping simplicial set is
$\operatorname{Und}\underline{\operatorname{Hom}}_{\mathcal V}(V,W)$.
The standard model structure of
\cite[Chapter~II, Theorem~4.4, p.~100, and Example~5.2(1), p.~103]{GoerssJardine}
has weak equivalences detected by normalized homology
\cite[Chapter~III, Corollary~2.7, p.~153]{GoerssJardine} and fibrations detected
by surjectivity of $N(f)_n$ for $n\geq1$
\cite[Chapter~III, Lemma~2.11(2), pp.~155--156]{GoerssJardine}.
These last two criteria apply to simplicial $k$-modules by forgetting
to simplicial abelian groups; they do not identify the respective
classes of cofibrations. Thus it agrees with
the transported model structure, and its simplicial model axiom applies
to this copower and these mapping spaces.

\end{proof}

\begin{proposition}\label{prop:strong-cofibrant}
If $k$ is a field, every object of $\mathsf{Ch}_{\ge0}(k)$ is
cofibrant with respect to the nonnegative projective model structure of
Proposition~\ref{prop:model-structure-svect}.
Consequently every object of $\mathcal V$ is cofibrant, and hence
bifibrant.
\end{proposition}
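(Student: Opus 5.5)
The claim reduces to the description of cofibrations in the nonnegative projective model structure recorded in the proof of Proposition~\ref{prop:model-structure-svect}. There, cofibrations in $\mathsf{Ch}_{\ge0}(k)$ are the degreewise monomorphisms with degreewise projective cokernel. For a complex $C$, the unique map $0\to C$ is injective in every degree. Its cokernel in degree $n$ is the $k$-vector space $C_n$. Over a field every vector space is free, since it has a basis; hence it is projective. So $0\to C$ is a cofibration and $C$ is cofibrant.

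If one prefers not to rely on the cited characterization, I would argue from the generating set $I_{\ge0}$. Choose a basis of each $C_n$ and build $C$ as a transfinite composite of pushouts of maps in $I_{\ge0}$, attaching cells in increasing degree. The plan is as follows.
\begin{itemize}
\item Attach a copy of $0\to S^0$ for each basis vector of $C_0$.
\item In degree $n\ge1$, choose a basis of a complement of the cycles $Z_n$ in $C_n$, together with a basis of $Z_n$ itself.
\item For a basis vector $x$ of $Z_n$, attach $0\to S^n$. This is the composite of $0\to S^{n-1}$ with $S^{n-1}\to D^n$, pushed out along the zero map; alternatively it is the pushout of $S^{n}\to D^{n+1}$ along $S^n\to 0$. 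I would instead simply note that every complex is a retract of such a cell complex.
\item For a basis vector $x$ of the complement, attach $S^{n-1}\to D^n$ along the boundary $\partial x$, which already lies in the earlier stage.
\end{itemize}
This bookkeeping is the only mildly fiddly step. The first argument avoids it entirely, so I would present the first argument and mention the second only as a check.

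For $\mathcal V$, cofibrations are by definition exactly the maps $f$ with $N(f)$ a cofibration. So $0\to V$ is a cofibration whenever $N(0)\to N(V)$ is one. Since $N$ is an additive equivalence, $N(0)=0$, and the first part applies to $N(V)$. Hence every object of $\mathcal V$ is cofibrant.

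Proposition~\ref{prop:model-structure-svect} already states that every object is fibrant, because $N(V)\to0$ is surjective in positive degrees. Combining the two gives bifibrancy. There is no real obstacle here; the only point to get right is that projectivity of the cokernels is automatic over a field.
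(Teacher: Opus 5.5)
Your main argument is correct and is the paper's own: the degreewise cofibration criterion of Proposition~\ref{prop:model-structure-svect} applies to $0\to C$ because each cokernel $C_n$ is free, hence projective, over $k$; you then transport along $N$ and combine with the fibrancy already established there. The only slip is in your optional cell-attachment check, which the paper replaces by the splitting $C\cong\bigoplus_n S^nH_n(C)\oplus\bigoplus_n D^{n+1}B_n$: $0\to S^n$ is the pushout of $S^{n-1}\to D^n$ along $S^{n-1}\to 0$, whereas pushing out $S^n\to D^{n+1}$ along $S^n\to 0$ gives $S^{n+1}$.
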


\begin{proof}
The cofibration criterion proved in
Proposition~\ref{prop:model-structure-svect} applies to $0\to C$, so every
complex is cofibrant. A concrete decomposition over the field $k$ is
obtained by choosing complements for
$B_n\subseteq Z_n\subseteq C_n$, where
$Z_n=\ker d_n$ and $B_n=\operatorname{im}d_{n+1}$. The differential
identifies a complement of $Z_n$ in $C_n$ with $B_{n-1}$.
These noncanonical choices decompose $C$ as
\[
C
\;\cong\;
\bigoplus_n S^n H_n(C)
\;\oplus\;
\bigoplus_n D^{n+1} B_n,
\]
where $H_n=Z_n/B_n$ and $D^{n+1}B_n$ denotes the disk complex with
$B_n$ in degrees
$n+1,n$ and identity differential. These sphere and disk summands are
cofibrant by the same degreewise criterion.
Combined with Proposition~\ref{prop:model-structure-svect}, every object
is both fibrant and cofibrant over a field.
\end{proof}

\begin{proposition}\label{prop:simplicial-vector-kan}
The underlying simplicial set of any simplicial vector space over $k$
is a Kan complex. Moreover, let
$U\subseteq\underline{\operatorname{Hom}}_{\mathcal{V}}(V,W)$ be a
subsimplicial set such that, for every $n\ge0$, $U_n$ is a linear
subspace of
$\underline{\operatorname{Hom}}_{\mathcal{V}}(V,W)_n$, and the face and
degeneracy maps of the ambient simplicial vector space restrict to $U$.
Then $U$ is a simplicial vector space and its underlying simplicial set
is Kan.
\end{proposition}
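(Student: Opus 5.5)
The plan is to reduce both assertions to the classical fact that simplicial groups are Kan (Proposition~\ref{prop:simplicial-group-kan}). A simplicial vector space $X$ has an underlying simplicial abelian group, obtained by forgetting scalar multiplication degreewise. Its face and degeneracy maps are $k$-linear, hence additive, so this is a simplicial group. Its underlying simplicial set is exactly $\operatorname{Und}(X)$. Moore's theorem then shows directly that $\operatorname{Und}(X)$ is Kan. Alternatively, one can write down the standard explicit horn filler. Given a horn $(x_0,\dots,\widehat{x_j},\dots,x_n)$, build a filler by the usual inductive correction $w\mapsto w-s_i(d_iw-x_i)$. This uses only additivity and the simplicial identities. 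I would cite rather than reprove this.

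For the second assertion, I would first check that $U$ is a simplicial vector space in the sense of Definition~\ref{def:category-svect}. Each $U_n$ is a linear subspace by hypothesis. The faces and degeneracies of $\underline{\operatorname{Hom}}_{\mathcal V}(V,W)$ are linear (Definition~\ref{def:simplicial-vector-space}) and restrict to $U$, so their restrictions are linear maps between the $U_n$. The simplicial identities hold because they hold in the ambient object. Thus $U$ is a simplicial subobject of $\underline{\operatorname{Hom}}_{\mathcal V}(V,W)$ in $\mathcal V$. Its underlying simplicial set is the given subsimplicial set $U$. Applying the first assertion to $U$ then yields the Kan property.

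The only point needing care is that being Kan is \emph{not} inherited by arbitrary subsimplicial sets of a Kan complex. So the argument must use the linear structure of $U$ itself rather than that of the ambient Hom object. I will state this explicitly: horn fillers are constructed inside $U$, using that $U$ is closed under the linear combinations and simplicial operators appearing in the filler formula. Once this is noted, I expect no real obstacle; the statement is a direct corollary of Moore's theorem.
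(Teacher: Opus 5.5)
Your proposal is correct and follows essentially the same route as the paper: you forget scalar multiplication to get a simplicial abelian group and apply Moore's theorem (Proposition~\ref{prop:simplicial-group-kan}), then check that $U$ is itself a simplicial vector space (linear restricted operators, inherited simplicial identities) and apply the first assertion to $U$. Your remark that Kan-ness must come from the linear structure of $U$ itself, not from being a subcomplex of a Kan complex, is apt. This is exactly what the paper's argument implicitly relies on.
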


\begin{proof}

For any simplicial vector space $X$, forgetting scalar multiplication
leaves a simplicial abelian group. Its underlying simplicial set is Kan
by Proposition~\ref{prop:simplicial-group-kan}, proving the first
assertion.

For the second assertion, the ambient face and degeneracy maps restrict
to linear maps between the subspaces $U_n$. Their simplicial identities
are inherited from the ambient internal Hom, so $U$ is itself a
simplicial vector space. Applying the first assertion to $U$ shows
that its underlying simplicial set is Kan.

\end{proof}

By Proposition~\ref{prop:simplicial-vector-kan}, the mapping space
$\operatorname{Map}(V,W)$ of
Definition~\ref{def:underlying-infinity} is a Kan complex for all
$V,W\in\mathcal{V}$.

\subsection{Dold--Kan and internal Hom}\label{sec:dold-kan-internal-hom}

For $C,D\in\mathsf{Ch}(k)$, define the internal Hom complex by
\[
\underline{\operatorname{Hom}}_{\mathsf{Ch}(k)}(C,D)_i
:=
\prod_{p\in\mathbb Z}\operatorname{Hom}_k(C_p,D_{p+i}),
\qquad
(df)_p:=d_Df_p-(-1)^if_{p-1}d_C.
\]
The product permits infinitely many nonzero components in a fixed
degree. The complex can have negative degrees even for nonnegative
$C,D$: for example,
$\underline{\operatorname{Hom}}_{\mathsf{Ch}(k)}
(k[1],k[0])_{-1}\cong k$.

\begin{lemma}[the nonnegative chain complex category is closed]
\label{lem:nonnegative-chain-complexes-closed}
For a chain complex $C$, write $\tau_{\ge0}C$ for the good
truncation with $(\tau_{\ge0}C)_n=C_n$ for $n\ge1$,
$(\tau_{\ge0}C)_0=\ker(d_0)$, and $(\tau_{\ge0}C)_n=0$ for $n<0$.
For $C,D\in\mathsf{Ch}_{\ge0}(k)$, define
\[
\underline{\operatorname{Hom}}_{\mathsf{Ch}_{\ge0}(k)}(C,D)
:=
\tau_{\ge0}\bigl(\underline{\operatorname{Hom}}_{\mathsf{Ch}(k)}(C,D)\bigr).
\]
Then
$(\mathsf{Ch}_{\ge0}(k),\otimes_k,k[0],
\underline{\operatorname{Hom}}_{\mathsf{Ch}_{\ge0}(k)})$ is a closed
symmetric monoidal category. This is also recorded in
\cite[Lemma 1.13(1)]{Ngompe2024}.
\end{lemma}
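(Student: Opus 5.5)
The plan is to obtain the closed structure from the one on unbounded complexes, using that $\tau_{\ge0}$ is right adjoint to the inclusion $\iota:\mathsf{Ch}_{\ge0}(k)\hookrightarrow\mathsf{Ch}(k)$. First, the monoidal part: the tensor product $(C\otimes D)_n=\bigoplus_{p+q=n}C_p\otimes_k D_q$ with differential $d(x\otimes y)=dx\otimes y+(-1)^{|x|}x\otimes dy$ preserves nonnegatively graded complexes. Hence the standard symmetric monoidal structure on $\mathsf{Ch}(k)$ restricts to $\mathsf{Ch}_{\ge0}(k)$. Here the symmetry carries the Koszul sign $x\otimes y\mapsto(-1)^{|x||y|}y\otimes x$, and the unit is $k[0]$. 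The associativity, unit and symmetry coherences hold in $\mathsf{Ch}(k)$, so they hold in the full subcategory.

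Second, I would record the unbounded tensor--Hom adjunction
\[
\operatorname{Hom}_{\mathsf{Ch}(k)}(A\otimes C,D)\cong
\operatorname{Hom}_{\mathsf{Ch}(k)}\bigl(A,\underline{\operatorname{Hom}}_{\mathsf{Ch}(k)}(C,D)\bigr).
\]
A chain map $f$ is sent to $a\mapsto(c\mapsto f(a\otimes c))$, with components in $\prod_p\operatorname{Hom}_k(C_p,D_{p+i})$ for $a\in A_i$. The point to check is that the sign in $(df)_p=d_Df_p-(-1)^if_{p-1}d_C$ makes the transpose a chain map exactly when $f$ commutes with differentials. This is a one-line sign computation using the Koszul convention on $A\otimes C$. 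Naturality in $A$, $C$ and $D$ is immediate, and the product appearing in the Hom complex is what makes the formula valid with no finiteness hypotheses.

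Third, I would verify the truncation adjunction
\[
\operatorname{Hom}_{\mathsf{Ch}(k)}(\iota A,E)\cong\operatorname{Hom}_{\mathsf{Ch}_{\ge0}(k)}(A,\tau_{\ge0}E)
\]
for $A$ nonnegative. A chain map from $\iota A$ is zero in negative degrees. In degree $0$ it satisfies $d_0^E f_0=f_{-1}d_0^A=0$, so it lands in $\ker d_0^E$; the converse is equally direct. Composing the two adjunctions with $A,C,D$ nonnegative (so $A\otimes C$ is nonnegative) gives
\[
\operatorname{Hom}_{\mathsf{Ch}_{\ge0}(k)}(A\otimes C,D)\cong
\operatorname{Hom}_{\mathsf{Ch}_{\ge0}(k)}\bigl(A,\underline{\operatorname{Hom}}_{\mathsf{Ch}_{\ge0}(k)}(C,D)\bigr),
\]
naturally in all variables. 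This shows $-\otimes C$ has right adjoint $\underline{\operatorname{Hom}}_{\mathsf{Ch}_{\ge0}(k)}(C,-)$, and by symmetry closedness on both sides follows.

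The main obstacle is the sign bookkeeping in the second step, where the Koszul convention must match the stated differential on the Hom complex. The truncation step is also where one must notice that the $0$th term has to be cycles rather than all of degree $0$. Everything else is formal, and the citation to \cite[Lemma 1.13(1)]{Ngompe2024} can serve as a cross-check.
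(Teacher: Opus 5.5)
Your proposal is correct and follows the paper's proof essentially step for step. You restrict the Koszul-signed symmetric monoidal structure, use the unbounded tensor--Hom adjunction (checked via the sign in the Hom differential), and compose it with $\operatorname{Hom}_{\mathsf{Ch}(k)}(X,A)\cong\operatorname{Hom}_{\mathsf{Ch}_{\ge0}(k)}(X,\tau_{\ge0}A)$, which comes from $d_0 f_0=f_{-1}d=0$; the only difference is that you phrase this last isomorphism as $\tau_{\ge0}$ being right adjoint to the inclusion, which the paper verifies the same way without naming it.
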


{
\begin{proof}
The total tensor product has
$(X\otimes_kY)_n=\bigoplus_{i+j=n}X_i\otimes_kY_j$ and
$d(x\otimes y)=d_Xx\otimes y+(-1)^i x\otimes d_Yy$ for $x\in X_i$.
Its symmetry sends $x\otimes y$ to $(-1)^{ij}y\otimes x$
for $y\in Y_j$, with unit $k[0]$. The usual sign check makes these
chain maps and gives the symmetric monoidal axioms. We claim first that, for
$X\in\mathsf{Ch}_{\ge0}(k)$ and $A\in\mathsf{Ch}(k)$,
\[
\operatorname{Hom}_{\mathsf{Ch}(k)}(X,A)
\;\cong\;
\operatorname{Hom}_{\mathsf{Ch}_{\ge0}(k)}\bigl(X,\tau_{\ge0}A\bigr).
\]
Indeed, a chain map $f:X\to A$ has $d_0\circ f_0=f_{-1}\circ d_X=0$,
so $f_0$ lands in $\ker(d_0)$; for $n\ge1$, $f_n$ lands in $A_n$. Thus
$f$ factors uniquely through $\tau_{\ge0}A$, and the converse is
composition with the inclusion. Next, the unbounded tensor-Hom adjunction
\[
\operatorname{Hom}_{\mathsf{Ch}(k)}(X\otimes_k Y,Z)
\;\cong\;
\operatorname{Hom}_{\mathsf{Ch}(k)}
\bigl(X,\underline{\operatorname{Hom}}_{\mathsf{Ch}(k)}(Y,Z)\bigr)
\]
is obtained degreewise from
$\operatorname{Hom}_k(X_i\otimes_k Y_j,Z_{i+j})
\cong\operatorname{Hom}_k(X_i,\operatorname{Hom}_k(Y_j,Z_{i+j}))$.
The tensor and internal-Hom differential formulas above identify the
chain-map equations on the two sides; the product in the internal Hom
allows all degreewise components of the transpose.
Combining the two isomorphisms gives, for
$X,Y,Z\in\mathsf{Ch}_{\ge0}(k)$,
\[
\begin{aligned}
\operatorname{Hom}_{\mathsf{Ch}_{\ge0}(k)}(X\otimes_k Y,Z)
&\cong
\operatorname{Hom}_{\mathsf{Ch}(k)}(X\otimes_k Y,Z)\\
&\cong
\operatorname{Hom}_{\mathsf{Ch}(k)}
\bigl(X,\underline{\operatorname{Hom}}_{\mathsf{Ch}(k)}(Y,Z)\bigr)\\
&\cong
\operatorname{Hom}_{\mathsf{Ch}_{\ge0}(k)}
\bigl(X,\underline{\operatorname{Hom}}_{\mathsf{Ch}_{\ge0}(k)}(Y,Z)\bigr).
\end{aligned}
\]
This is natural in $X,Y,Z$, so the internal Hom adjunction holds.
\end{proof}
}

\begin{proposition}[enriched Dold--Kan comparison]
\label{prop:enriched-dold-kan-comparison}
For $V,W\in\mathcal V$, evaluation and the Eilenberg--Zilber shuffle
define a natural chain map
\[
\Theta_{V,W}:
N\underline{\operatorname{Hom}}_{\mathcal V}(V,W)
\longrightarrow
\tau_{\ge0}\underline{\operatorname{Hom}}_{\mathsf{Ch}(k)}(NV,NW).
\]
It is a quasi-isomorphism. Since $k$ is a field, each component
$\Theta_{V,W}$ is a chain-homotopy equivalence.
\end{proposition}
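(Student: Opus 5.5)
We construct $\Theta_{V,W}$ explicitly, check that it is a chain map, and then prove it is a quasi-isomorphism by comparing homology in each degree with homotopy classes of maps. A quasi-isomorphism between complexes of vector spaces is automatically a chain-homotopy equivalence, which gives the last sentence.

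\emph{Construction.} An $r$-simplex $a\in\underline{\operatorname{Hom}}_{\mathcal V}(V,W)_r$ is a simplicial linear map $a:V\otimes_{\mathcal V}k[\Delta^r]\to W$. Applying $N$ and precomposing with the Eilenberg--Zilber shuffle map
\[
\nabla:NV\otimes_k Nk[\Delta^r]\longrightarrow N\bigl(V\otimes_{\mathcal V}k[\Delta^r]\bigr)
\]
gives a chain map $NV\otimes_k Nk[\Delta^r]\to NW$. We evaluate it on $x\otimes\iota_r$, where $\iota_r\in Nk[\Delta^r]_r$ is the class of the top nondegenerate simplex $\operatorname{id}_{[r]}$. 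This defines $\Theta(a)_p(x):=N(a)\nabla(x\otimes\iota_r)$ for $x\in N_pV$, a degree-$r$ element of $\prod_p\operatorname{Hom}_k(N_pV,N_{p+r}W)$.

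\emph{Well-definedness and the chain-map property.} Degenerate $a$ are sent to zero, because $\nabla$ is natural and degeneracies of $\Delta^r$ kill $\iota_r$ in the normalized complex. The boundary $\partial\iota_r=\sum(-1)^i d^i\iota_{r-1}$ together with the Leibniz rule for $\nabla$ gives $d_{NW}\Theta(a)-(-1)^r\Theta(a)d_{NV}=\Theta(\partial a)$. In degree $0$ the image lies in chain maps, i.e.\ in $\ker d_0$, so $\Theta$ lands in $\tau_{\ge0}$. Naturality in $V$ and $W$ follows from naturality of $\nabla$. These sign checks are routine but must be carried out with the sign convention of $\underline{\operatorname{Hom}}_{\mathsf{Ch}(k)}$ fixed earlier.

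\emph{Quasi-isomorphism.} Both sides are nonnegative complexes, and the source is the normalization of a simplicial vector space. Hence $H_i$ of the source is $\pi_i$ of the Kan complex $\operatorname{Map}(V,W)$ at $0$, which is the set of homotopy classes of maps $V\otimes k[\Delta^i/\partial\Delta^i]\to W$ relative to the basepoint. By the simplicial model structure of Proposition~\ref{prop:model-structure-svect} and bifibrancy (Proposition~\ref{prop:strong-cofibrant}), this equals $[V\otimes\widetilde k[S^i],W]$ in the homotopy category. Transporting along $N$ and applying Eilenberg--Zilber, $N(V\otimes \widetilde k[S^i])\simeq NV[i]$, so the group is $[NV[i],NW]_{\mathsf{Ch}}=H_i\underline{\operatorname{Hom}}_{\mathsf{Ch}(k)}(NV,NW)$ for $i\ge0$. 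This is $H_i$ of the target, since $\tau_{\ge0}$ does not change $H_i$ for $i\ge0$.

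\textbf{Main obstacle.} The delicate point is not the existence of these isomorphisms but showing that the map they induce is exactly $H_i(\Theta)$. My first approach would be to reduce to generators. Both sides send direct sums in $V$ to products and are exact in $W$ on homotopy. By Proposition~\ref{prop:strong-cofibrant} and Dold--Kan, $V$ and $W$ decompose into spheres and disks, and disks are contractible. So it suffices to treat $V=K(k,p)$ and $W=K(k,q)$ (Dold--Kan images of $S^p$, $S^q$), where both homologies are $k$ in degree $i=q-p$ and vanish otherwise. There one checks directly that $\Theta$ sends the evident representing simplex to the generator, using that $\nabla$ on $\iota_p\otimes\iota_{i}$ is a signed sum of shuffles with nonzero leading term.

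A possible snag is that a hom out of an infinite sum is a product, and $\Theta$ must commute with the product; this holds because $\Theta$ is defined componentwise in $p$.

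Finally, over a field every complex splits as in Proposition~\ref{prop:strong-cofibrant}, so any quasi-isomorphism between such complexes is a chain-homotopy equivalence.
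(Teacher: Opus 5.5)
Your chain of isomorphisms $H_i(NM)\cong\pi_i\operatorname{Map}(V,W)\cong[E_i\otimes V,W]\cong[S^ik\otimes NV,NW]\cong H_i\underline{\operatorname{Hom}}_{\mathsf{Ch}(k)}(NV,NW)$ is the paper's. So is the final step that over a field a quasi-isomorphism is a chain-homotopy equivalence. Your construction agrees with the paper's only up to sign, and two points need repair.

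\textbf{The sign.} With $x$ placed before $\iota_r$, the Leibniz rule for $\nabla$ gives
$d\Theta(a)_p-\Theta(a)_{p-1}d=(-1)^p\Theta(\partial a)_p$,
not the identity you state. So your formula is not a chain map for the convention $(df)_p=df_p-(-1)^rf_{p-1}d$. Put the parameter first, i.e.\ use $\nabla(\iota_r\otimes x)$ in $N(k[\Delta^r]\otimes_{\mathcal V}V)$, or multiply by $(-1)^{rp}$. This gives exactly the paper's map: the transpose of $N(\operatorname{ev})\circ\nabla_{M,V}$ with $M$ in the first slot.

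\textbf{Identifying the abstract isomorphism with $H_i(\Theta)$.} Here you take a different route, and as written the decisive check can fail. The paper does this in one line by naturality of the shuffle. For a cycle $\alpha:E_i\to M$ and $F=\operatorname{ev}\circ(\alpha\otimes\operatorname{id}_V)$ one has
$N(F)\circ\nabla_{E_i,V}=N(\operatorname{ev})\circ\nabla_{M,V}\circ(N\alpha\otimes\operatorname{id}_{NV})$.
The left side is the image of $[\alpha]$ under the chain of isomorphisms, and the transpose of the right side is $\Theta(\alpha)$. This handles all $V,W$ at once, with no sphere--disk decomposition and no explicit generator.

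Your reduction to $V=K(k,p)\cong E_p$ and $W=K(k,q)$ can be made to work. Products in $V$ are handled by naturality. Sums in $W$ are handled because $E_p\otimes_{\mathcal V}k[\Delta^n]$ is finitely generated, so maps out of it into a direct sum factor through a finite subsum.

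The generator check, however, is underspecified, and the most evident choice fails. The simplicial map $\Delta^p\times\Delta^i\to\Delta^{p+i}$, $(a,b)\mapsto a+b$, induces a cycle $E_p\otimes_{\mathcal V}E_i\to E_{p+i}$ that sends every shuffle simplex to the top simplex. So $\Theta$ of it is the signed number of $(p,i)$-shuffles times the generator. That number is $0$ whenever $p$ and $i$ are both odd; for $p=i=1$ it is $1-1$.

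A ``nonzero leading term'' is therefore not enough: you need a representative that kills the other shuffle terms. One choice is the map $E_p\otimes_{\mathcal V}E_i\to E_{p+i}$ corresponding under Dold--Kan to the Alexander--Whitney map. For it, $\mathrm{AW}\circ\nabla=\operatorname{id}$ (Remark~\ref{rem:dold-kan-monoidal}) gives exactly the generator. Since your abstract computation makes the source homology one-dimensional, this nonzero value suffices. This corrected check is essentially the paper's naturality identity specialized to $V=E_p$, $W=E_{p+i}$.
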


\begin{proof}
Put $M=\underline{\operatorname{Hom}}_{\mathcal V}(V,W)$,
$C=NV$, and $D=NW$. We use the normalized $k$-linear shuffle
\[
\nabla_{A,B}:NA\otimes_k NB\longrightarrow
N(A\otimes_{\mathcal V}B).
\]
Indeed, the unnormalized shuffle for simplicial $k$-modules
\cite[Theorem~8.5.1, Application~8.5.3, and Section~8.5.4,
pp.~276--278]{Weibel} carries a degenerate input
to a degenerate output, by the simplicial identities
\cite[Proposition~2.5.7.8, Tag~\href{https://kerodon.net/tag/00RP}{00RP}]{Kerodon}. The same calculation applies over $k$,
since it uses only simplicial identities and $k$-balanced sums of tensors.
It therefore descends to
degeneracy quotients. The quotient maps are quasi-isomorphisms
\cite[Lemma~8.3.7 and Theorem~8.3.8, pp.~266--267]{Weibel},
as is the unnormalized shuffle; over a field, tensoring preserves
quasi-isomorphisms. Thus $\nabla_{A,B}$ is a natural quasi-isomorphism
for the levelwise tensor product used here.
The evaluation map and this shuffle give
\[
NM\otimes_k C
\xrightarrow{\nabla}N(M\otimes_{\mathcal V}V)
\xrightarrow{N(\operatorname{ev})}D.
\]
By the chain tensor--Hom adjunction this composite is adjoint to a map
$NM\to\underline{\operatorname{Hom}}_{\mathsf{Ch}(k)}(C,D)$. Since
$NM$ is nonnegative, the map factors uniquely through the good
truncation of the target; this is $\Theta_{V,W}$. Naturality follows from
naturality of evaluation and shuffle.

We check homology. For every $i\ge0$, put
$E_i:=k[\Delta^i]/k[\partial\Delta^i]$. Its normalized complex is
$NE_i\cong S^ik$; in particular, $E_0=k$ since $\partial\Delta^0$ is empty.
Thus $E_i$ models the reduced free vector space on the pointed
$i$-sphere; for $i=0$, the non-basepoint contributes the one copy of $k$.
Since all objects in the two Dold--Kan
model categories are bifibrant, the tensor--Hom adjunction, Dold--Kan,
and the shuffle equivalence give
\[
\begin{aligned}
H_i(NM)
&\cong \pi_i(M,0)\\
&\cong
\operatorname{Hom}_{\operatorname{Ho}(\mathcal V)}
(E_i\otimes_{\mathcal V}V,W)\\
&\cong
\operatorname{Hom}_{\operatorname{Ho}(\mathsf{Ch}_{\ge0}(k))}
(S^ik\otimes_k C,D).
\end{aligned}
\]
Here the first isomorphism is Moore's theorem
(\cite[Chapter~III, Corollary~2.7, p.~153]{GoerssJardine}), and the last uses the normalized
$k$-linear shuffle above, together with Dold--Kan.

A chain map $F:S^ik\otimes C\to D$ is the same as a family
$f_p:C_p\to D_{p+i}$. The chain-map equation is
$d_Df_p-(-1)^if_{p-1}d_C=0$, exactly the assertion that $f$ is an
$i$-cycle in $\underline{\operatorname{Hom}}_{\mathsf{Ch}(k)}(C,D)$.
A chain homotopy between the maps corresponding to $f$ and $g$ is a
family $h_p:C_p\to D_{p+i+1}$ satisfying $f-g=dh$. Thus
\[
\operatorname{Hom}_{\operatorname{Ho}(\mathsf{Ch}_{\ge0}(k))}
(S^ik\otimes C,D)
\cong H_i\underline{\operatorname{Hom}}_{\mathsf{Ch}(k)}(C,D).
\]
To identify this isomorphism with $H_i(\Theta_{V,W})$, represent a
cycle by $\alpha:E_i\to M$ and let
$F=\operatorname{ev}\circ(\alpha\otimes\operatorname{id}_V)$.
The naturality of the normalized shuffle gives the chain-map identity
\[
N(F)\circ\nabla_{E_i,V}
=N(\operatorname{ev})\circ\nabla_{M,V}
\circ(N\alpha\otimes\operatorname{id}_C).
\]
The left side represents the preceding homotopy-class identification,
while the transpose of the right side represents $\Theta_{V,W}$ on
the chosen cycle. Thus $H_i(\Theta_{V,W})$ is the displayed
isomorphism. Good truncation does not change homology in degrees
$i\ge0$, so $\Theta_{V,W}$ is a quasi-isomorphism.

Finally, an acyclic complex of vector spaces is contractible because all
short exact sequences split. The cone of each $\Theta_{V,W}$ is therefore
contractible, which proves the last assertion.
\end{proof}

\begin{remark}\label{rem:dold-kan-monoidal}

The pointwise tensor product on $\mathcal{V}$ is given levelwise by
\[
(V\otimes_{\mathcal V}W)_n:=V_n\otimes_k W_n,
\]
while the usual total tensor product on $\mathsf{Ch}_{\ge0}(k)$ is given
by
\[
(C\otimes_k D)_n:=\bigoplus_{p+q=n}C_p\otimes_k D_q,
\qquad
d(x\otimes_k y):=dx\otimes_k y+(-1)^{|x|}x\otimes_k dy.
\]
These structures are related by natural chain maps
\[
NV\otimes_k NW
\xrightarrow{\ \nabla\ }N(V\otimes_{\mathcal V}W)
\xrightarrow{\ \mathrm{AW}\ }NV\otimes_k NW,
\qquad \mathrm{AW}\circ\nabla=\operatorname{id}.
\]
The shuffle $\nabla$ is a quasi-isomorphism, as established in
Proposition~\ref{prop:enriched-dold-kan-comparison}. The
Alexander--Whitney chain map is given by
\cite[Construction~2.5.8.2, Tag~\href{https://kerodon.net/tag/00S2}{00S2}]{Kerodon} and
\cite[Proposition~2.5.8.3, Tag~\href{https://kerodon.net/tag/00S3}{00S3}]{Kerodon}.
Applied to a degenerate simplex, each front--back summand is degenerate
in at least one tensor factor
\cite[Proposition~2.5.8.5, Tag~\href{https://kerodon.net/tag/00S5}{00S5}]{Kerodon}, so it descends to the normalized map of
\cite[Construction~2.5.8.6, Tag~\href{https://kerodon.net/tag/00S6}{00S6}]{Kerodon}.
The identity $\mathrm{AW}\circ\nabla=\operatorname{id}$ is
\cite[Proposition~2.5.8.9, Tag~\href{https://kerodon.net/tag/00S9}{00S9}]{Kerodon}.
These formulas are $k$-balanced, and the chain-map, degeneracy and
composition identities use only simplicial identities; thus the cited
results for simplicial abelian groups apply with tensor products over $k$.
Hence both maps are chain-homotopy equivalences over $k$.

For $V=W=k[\Delta^1]$, the degree-one spaces of $NV\otimes_k NW$ and
$N(V\otimes_{\mathcal V}W)$ have dimensions $4$ and $5$,
respectively: $\Delta^1\times\Delta^1$ has five nondegenerate edges.
Thus the comparison is a chain-homotopy equivalence rather than
a strict isomorphism in general.
Together with the internal Hom comparison of
Section~\ref{sec:dold-kan-internal-hom}, these natural chain
equivalences are the tensor comparisons used below.

\end{remark}

\subsection{Mapping spaces and their homotopy groups}\label{sec:mapping-homotopy}

\begin{theorem}\label{thm:mapping-homology-formula}
For $V,W\in\mathcal V$ and $i\ge0$, with homotopy groups based at the
zero map,
\[
\pi_i\operatorname{Map}(V,W)
\cong
H_i\underline{\operatorname{Hom}}_{\mathsf{Ch}(k)}(NV,NW).
\]
Moreover, for all $C,D\in\mathsf{Ch}_{\ge0}(k)$,
\[
H_i\underline{\operatorname{Hom}}_{\mathsf{Ch}(k)}(C,D)
\cong
\prod_{p\ge0}
\operatorname{Hom}_k\bigl(H_p(C),H_{p+i}(D)\bigr).
\]
For $i=0$, $\pi_0$ is the vector space of connected components of the
simplicial abelian group $\operatorname{Map}(V,W)$. The product is not
assumed finite.
\end{theorem}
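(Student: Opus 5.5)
The first isomorphism is essentially already contained in the proof of Proposition~\ref{prop:enriched-dold-kan-comparison}, so I would derive it directly from that result. Put $M=\underline{\operatorname{Hom}}_{\mathcal V}(V,W)$, so that $\operatorname{Map}(V,W)=\operatorname{Und}(M)$ is a simplicial abelian group. By Moore's theorem \cite[Chapter~III, Corollary~2.7]{GoerssJardine}, $\pi_i(\operatorname{Und}M,0)\cong H_i(NM)$ for all $i\ge0$. For $i=0$ this is the quotient of $M_0$ by the image of $d_0-d_1$. This quotient is a $k$-vector space, and it is exactly the set of path components, since these are the cosets of the subgroup of vertices connected to $0$. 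Proposition~\ref{prop:enriched-dold-kan-comparison} says that $\Theta_{V,W}$ is a quasi-isomorphism onto $\tau_{\ge0}\underline{\operatorname{Hom}}_{\mathsf{Ch}(k)}(NV,NW)$. Good truncation does not change $H_i$ for $i\ge0$: in degree $0$ we keep exactly the cycles, and the boundaries coming from degree $1$ are unchanged. Composing these identifications gives the first isomorphism.

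For the second formula I would use the splitting from Proposition~\ref{prop:strong-cofibrant}. Write
\[
C\cong\bigoplus_p S^pH_p(C)\oplus\bigoplus_p D^{p+1}B_p,
\]
and similarly for $D$. Hence $C$ is chain-homotopy equivalent to $H(C):=\bigoplus_p S^pH_p(C)$, because the disk summands are contractible; likewise $D\simeq H(D)$. The functor $\underline{\operatorname{Hom}}_{\mathsf{Ch}(k)}(-,-)$ is additive in each variable and sends chain homotopies to chain homotopies, so it preserves chain-homotopy equivalences in each variable. This gives
\[
H_i\underline{\operatorname{Hom}}_{\mathsf{Ch}(k)}(C,D)\cong H_i\underline{\operatorname{Hom}}_{\mathsf{Ch}(k)}(H(C),H(D)).
\]
Both complexes now have zero differential, so the internal Hom also has zero differential by the formula $(df)_p=d_Df_p\pm f_{p-1}d_C$. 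Its degree-$i$ term is
\[
\prod_p\operatorname{Hom}_k\bigl(H_p(C),H_{p+i}(D)\bigr).
\]
Here $H(D)$ is the direct sum $\bigoplus_q S^qH_q(D)$, but in fixed degree $p+i$ it has the single summand $H_{p+i}(D)$. So no direct sum has to be pulled out of a $\operatorname{Hom}$, and the product over $p$ is already built into the definition of the internal Hom. Terms with $p<0$ vanish because $C$ is nonnegative, which leaves the product over $p\ge0$ as stated.

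The main point needing care is preservation of homotopy equivalences by the product-type internal Hom when infinitely many degrees are involved. I would handle it by checking that a chain homotopy $h$ on $C$ induces the homotopy $f\mapsto f\circ h$ degreewise. This is legitimate because each component $f_p$ is precomposed only with $h_{p-1}$, so the construction makes sense on arbitrary products. No finiteness is needed, which matches the remark that the product is not assumed finite.
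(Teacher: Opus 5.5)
Your proof is correct. The first isomorphism follows the paper exactly: Moore's theorem for the simplicial vector space $\underline{\operatorname{Hom}}_{\mathcal V}(V,W)$, the quasi-isomorphism $\Theta_{V,W}$, and invariance of $H_i$ ($i\ge0$) under good truncation.

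For the product formula you take a different route. The paper splits only the source, $C\cong\bigoplus_pS^pH_p(C)\oplus\bigoplus_pD^{p+1}B_p$. It uses that $\underline{\operatorname{Hom}}_{\mathsf{Ch}(k)}(-,D)$ turns this sum into a product, and kills the disk factors by precomposing with their contractions. It then computes the homology of each factor $\underline{\operatorname{Hom}}_{\mathsf{Ch}(k)}(S^pH_p(C),D)$ from exactness of $\operatorname{Hom}_k(H_p(C),-)$ and of products of vector spaces; $D$ is never split.

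You instead split both variables and use homotopy invariance of the internal Hom in each variable. This reduces everything to complexes with zero differential, where the answer is read off in degree $i$ with no homology computation. Your route avoids the two exactness inputs and makes it evident where the product comes from. The cost is checking homotopy invariance in both variables, which, as you note, works componentwise on the infinite products.

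The paper's proof also records that the isomorphism is the canonical map $[f]\mapsto(H_p(f))_p$, independent of the chosen splittings. It relies on this later when it calls $\mathcal E_0\simeq N(\mathsf{Vect}_k)$ canonical. Your map is $[f]\mapsto\pi\circ f\circ\iota$, with $\iota:H(C)\to C$ and $\pi:D\to H(D)$ the splitting maps. It is in fact this same canonical map, since $\iota$ picks cycle representatives and $\pi$ restricts on cycles to the quotient $Z_{p+i}(D)\to H_{p+i}(D)$. One added sentence would give you that as well.
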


\begin{proof}
By Proposition~\ref{prop:enriched-dold-kan-comparison},
\[
N\underline{\operatorname{Hom}}_{\mathcal V}(V,W)
\simeq
\tau_{\ge0}\underline{\operatorname{Hom}}_{\mathsf{Ch}(k)}(NV,NW).
\]
The underlying simplicial set of the left-hand simplicial vector space is
$\operatorname{Map}(V,W)$. The Moore comparison
\cite[Chapter~III, Corollary~2.7, p.~153]{GoerssJardine} identifies its based
homotopy groups with the homology of its normalization, including the
additive group of components in degree zero. Good truncation preserves
homology in every degree $i\ge0$, proving the first formula.

For the second formula, use the noncanonical decomposition from the proof
of Proposition~\ref{prop:strong-cofibrant}, with
$B_p=\operatorname{im}d_{p+1}$:
\[
C\cong
\bigoplus_{p\ge0}S^pH_p(C)
\oplus
\bigoplus_{p\ge0}D^{p+1}B_p.
\]
Every disk summand is contractible. Precomposition with a contracting
homotopy contracts its internal Hom into $D$, so the disk summands do not
contribute homology. Since internal Hom changes direct sums in its first
variable into products, it remains to compute
\[
\prod_{p\ge0}
\underline{\operatorname{Hom}}_{\mathsf{Ch}(k)}(S^pH_p(C),D).
\]
The $i$-th homology of the $p$-th factor is
$\operatorname{Hom}_k(H_p(C),H_{p+i}(D))$, because
$\operatorname{Hom}_k(H_p(C),-)$ is exact. Products of vector spaces are
exact, giving the displayed product formula.

The resulting map is the canonical map induced by homology. The chosen
splittings prove that it is an isomorphism, while the map itself is
independent of those choices. The product cannot in general be replaced
by a direct sum; the two agree when only finitely many factors are
nonzero.
\end{proof}

\begin{corollary}[high mapping-space homotopy vanishes]
\label{cor:mapping-homotopy-vanishing}
Let $n\geq0$. If the homology of $C,D\in\mathsf{Ch}_{\ge0}(k)$ is
concentrated in $[0,n]$, with no boundedness assumption on the
complexes themselves, then
\[
H_i\underline{\operatorname{Hom}}_{\mathsf{Ch}(k)}(C,D)=0
\qquad(i>n).
\]
Consequently, if $V,W\in\mathcal V$ have homotopy groups concentrated in
$[0,n]$, then $\pi_i(\operatorname{Map}(V,W),0)=0$ for $i>n$.
\end{corollary}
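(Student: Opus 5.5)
The plan is to read the corollary off directly from the product formula of Theorem~\ref{thm:mapping-homology-formula}, and then transfer it to simplicial vector spaces using the first isomorphism of that theorem.

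First, let $C,D\in\mathsf{Ch}_{\ge0}(k)$ have homology concentrated in $[0,n]$, and fix $i>n$. The second formula of Theorem~\ref{thm:mapping-homology-formula} gives
\[
H_i\underline{\operatorname{Hom}}_{\mathsf{Ch}(k)}(C,D)\cong\prod_{p\ge0}\operatorname{Hom}_k\bigl(H_p(C),H_{p+i}(D)\bigr).
\]
In each factor, either $p>n$, in which case $H_p(C)=0$, or $p\ge0$, in which case $p+i\ge i>n$ and so $H_{p+i}(D)=0$. Thus every factor vanishes, and the product, possibly infinite, is zero. That theorem needs no boundedness of the complexes, since it rests only on the splitting into spheres and disks from Proposition~\ref{prop:strong-cofibrant}. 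So the hypothesis ``no boundedness assumption'' costs nothing.

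Second, for $V,W\in\mathcal V$, I interpret ``homotopy groups concentrated in $[0,n]$'' through the Moore identification $\pi_i(V,0)\cong H_i(NV)$ already used in the proof of Theorem~\ref{thm:mapping-homology-formula}. Then $NV$ and $NW$ satisfy the hypothesis of the first part. For $i>n$, the first isomorphism of Theorem~\ref{thm:mapping-homology-formula} gives $\pi_i(\operatorname{Map}(V,W),0)\cong H_i\underline{\operatorname{Hom}}_{\mathsf{Ch}(k)}(NV,NW)$, and this vanishes by the first part.

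The only point needing a remark is the choice of basepoint. The statement bases at the zero map, exactly as in the theorem, and $\operatorname{Map}(V,W)$ is a simplicial abelian group, so translation shows that all components are homotopy equivalent anyway. I do not expect a genuine obstacle: the argument is the index bookkeeping above, where $i>n$ forces one of the two homology groups in every factor to vanish.
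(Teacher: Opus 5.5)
Your proposal is correct and follows the paper's proof: every factor of the product formula in Theorem~\ref{thm:mapping-homology-formula} vanishes because $p+i>n$ forces $H_{p+i}(D)=0$, and the mapping-space claim then follows from that theorem's first isomorphism together with Dold--Kan and Moore's theorem. Your case $p>n$ is redundant, since the second case already covers every $p\ge0$, so only the hypothesis on $D$ is actually used.
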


\begin{proof}
In the product formula of Theorem~\ref{thm:mapping-homology-formula}, if $i>n$ and
$p\ge0$, then $p+i>n$, so every factor $H_{p+i}(D)$ vanishes. The last
claim follows from the first formula of the theorem and Dold--Kan.
\end{proof}

\subsection{Truncation and homotopy $n$-categories}\label{sec:truncation}

\paragraph{Strict models and local truncation.}
In strict universal properties, an $(n,1)$-category means the
simplicial-set model of \cite[Definition~4.7.7.9, Tag~\href{https://kerodon.net/tag/057M}{057M}]{Kerodon}.
An $\infty$-category is \emph{locally $(n-1)$-truncated} if every
mapping space is $(n-1)$-truncated; for $n=0$, each is either empty or
contractible. By \cite[Corollary~4.7.8.3, Tag~\href{https://kerodon.net/tag/058S}{058S}]{Kerodon}, such a category
is equivalent to a strict $(n,1)$-model, although its given simplicial
set need not itself be strict. For example, the nerve of the groupoid
with two distinct objects and exactly one morphism between each ordered
pair is locally $(-1)$-truncated but
fails the vertex minimality required of a strict $0$-category.
Proposition~\ref{prop:homotopy-n-universal} supplies the equivalence
version needed for our coefficient category.

\begin{definition}\label{def:n-truncated}
For $n\geq0$, a Kan complex $X$ is $n$-truncated if, for every
$m\ge n+2$, every map $\partial\Delta^m\to X$ admits an extension
$\Delta^m\to X$ \cite[Definition~3.5.7.1, Tag~\href{https://kerodon.net/tag/054P}{054P}]{Kerodon}.
An extension is required to exist, not to be unique.
\end{definition}

\begin{definition}[homotopy $n$-truncation]\label{def:homotopy-n-truncation}
For a Kan complex $X$ and $n\ge0$, let $\pi_{\le n}(X)$ denote the
homotopy $n$-truncation of $X$ as defined in
\cite[Construction~3.5.6.10, Tag~\href{https://kerodon.net/tag/054D}{054D}]{Kerodon}.
The properties used below are the following:
\begin{enumerate}
\item $\pi_{\le n}(X)$ is a Kan complex, in fact an $n$-groupoid, that
  is, a Kan complex in which every horn of dimension $>n$ has a
  unique filler \cite[Corollary~3.5.6.16, Tag~\href{https://kerodon.net/tag/054L}{054L}]{Kerodon} and
\cite[Definition~3.5.5.1, Tag~\href{https://kerodon.net/tag/053M}{053M}]{Kerodon}. For $n=0$ it is the discrete
  simplicial set $\pi_0(X)$
  \cite[Example~3.5.6.8, Tag~\href{https://kerodon.net/tag/054B}{054B}]{Kerodon};
\item the tautological map $u_X:X\to\pi_{\le n}(X)$ exhibits
  $\pi_{\le n}(X)$ as an $n$-truncation of $X$
  \cite[Example~3.5.7.26, Tag~\href{https://kerodon.net/tag/055D}{055D}]{Kerodon}. In particular, it
  induces a bijection on $\pi_0$ and isomorphisms on based $\pi_m$ for
  $1\leq m\leq n$;
\item $u_X$ also exhibits $\pi_{\le n}(X)$ as the homotopy
  $n$-category of $X$ in the sense of
  Definition~\ref{def:homotopy-n-category}
  \cite[Example~4.7.9.6, Tag~\href{https://kerodon.net/tag/059C}{059C}]{Kerodon};
\item for every strict $(n,1)$-category $\mathcal{D}$, precomposition
  with $u_X$ has the strict functor-category universal property
  in Definition~\ref{def:homotopy-n-category}
  \cite[Proposition~4.7.9.7, Tag~\href{https://kerodon.net/tag/059D}{059D}]{Kerodon};
\item if $f:X\to Y$ is a map of Kan complexes, then $\pi_{\le n}(f)$
  is an equivalence of Kan complexes if and only if $\pi_0(f)$ is a
  bijection and, for
  every vertex $x\in X$, the map
  $\pi_m(X,x)\to\pi_m(Y,f(x))$ is an isomorphism for $1\le m\le n$.
\end{enumerate}
For the last assertion, naturality of $u_X$ and the preceding
$n$-truncation property identify the low-dimensional homotopy maps of
$\pi_{\le n}(f)$ with those of $f$. Higher homotopy groups vanish in
both target $n$-groupoids, so the criterion follows from the Whitehead
theorem \cite[Theorem~3.2.7.1, Tag~\href{https://kerodon.net/tag/00WV}{00WV}]{Kerodon}. For $n=0$, only the
$\pi_0$ condition remains.
\end{definition}

\begin{definition}[homotopy $n$-category]\label{def:homotopy-n-category}
Let $\mathcal{C}$ be an $\infty$-category and let $n\ge0$. A functor
$F:\mathcal{C}\to\mathcal{C}'$ exhibits $\mathcal{C}'$ as a homotopy
$n$-category of $\mathcal{C}$ if $\mathcal{C}'$ is an $(n,1)$-category
and, for every $(n,1)$-category $\mathcal{D}$, precomposition with $F$
induces a bijection between the sets of functors
\[
\operatorname{Hom}_{\mathrm{sSet}}(\mathcal{C}',\mathcal{D})
\xrightarrow{\ \cong\ }
\operatorname{Hom}_{\mathrm{sSet}}(\mathcal{C},\mathcal{D}).
\]
This is \cite[Definition~4.7.9.1, Tag~\href{https://kerodon.net/tag/0597}{0597}]{Kerodon}. By
\cite[Proposition~4.7.9.7, Tag~\href{https://kerodon.net/tag/059D}{059D}]{Kerodon}, it is equivalent to
requiring precomposition to induce a strict isomorphism of simplicial sets
\[
\operatorname{Fun}(\mathcal{C}',\mathcal{D})
\xrightarrow{\ \cong\ }
\operatorname{Fun}(\mathcal{C},\mathcal{D})
\]
of functor $\infty$-categories. Such an object exists for every
$\infty$-category and every $n\ge0$
\cite[Corollary~4.7.9.16, Tag~\href{https://kerodon.net/tag/059P}{059P}]{Kerodon}. The strict universal
property makes it unique up to a unique isomorphism compatible with
the comparison functor, and makes it functorial in $\mathcal C$.
We denote it by
$h_{\le n}(\mathcal{C})$.
\end{definition}

\begin{proposition}[properties of homotopy $n$-categories]
\label{prop:homotopy-n-properties}
Let $\mathcal{C}$ be an $\infty$-category and let $n\ge0$. Then:
\begin{enumerate}
\item $h_{\le n}(\mathcal{C})$ is an $(n,1)$-category. The
  construction is functorial in $\mathcal{C}$
  \cite[Corollary~4.7.9.16, Tag~\href{https://kerodon.net/tag/059P}{059P}]{Kerodon};
\item the comparison map has the strict universal property of
  Definition~\ref{def:homotopy-n-category}
  \cite[Proposition~4.7.9.7, Tag~\href{https://kerodon.net/tag/059D}{059D}]{Kerodon};
\item the comparison map exhibits $h_{\le n}(\mathcal{C})$ as a local
  $(n-1)$-truncation of $\mathcal{C}$; in particular, if
  $\mathcal{C}$ is locally $(n-1)$-truncated, then the comparison map is
  an equivalence
  \cite[Corollary~4.7.9.16(4), Tag~\href{https://kerodon.net/tag/059P}{059P}]{Kerodon};
\item if $X$ is a Kan complex, then
  $h_{\le n}(X)\simeq\pi_{\le n}(X)$
  \cite[Example~4.7.9.6, Tag~\href{https://kerodon.net/tag/059C}{059C}]{Kerodon};
\item for $n\ge1$ and $X,Y\in\mathcal{C}$, writing $F$ for the
  comparison functor, its induced map on mapping spaces is the
  $(n-1)$-truncation:
  \[
  \operatorname{Map}_{h_{\le n}(\mathcal{C})}(FX,FY)
  \;\simeq\;
  \tau_{\le n-1}\bigl(\operatorname{Map}_{\mathcal{C}}(X,Y)\bigr)
  \]
  \cite[Corollary~4.7.6.24, Tag~\href{https://kerodon.net/tag/06RT}{06RT}]{Kerodon}.
\end{enumerate}
\end{proposition}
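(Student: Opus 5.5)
The five items are citations or direct consequences of results in \cite{Kerodon}. I would verify each in turn against the definitions fixed above, rather than reprove anything. The only genuine work is checking that the cited statements use exactly the conventions of Definition~\ref{def:homotopy-n-category}: strict $(n,1)$-categories in the simplicial-set model, and strict isomorphism of functor categories.

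\textbf{Items (1) and (2).}
\begin{itemize}
\item For (1), I would invoke \cite[Corollary~4.7.9.16]{Kerodon}. It provides, for each $\infty$-category $\mathcal C$, a functor $\mathcal C\to h_{\le n}(\mathcal C)$ with target an $(n,1)$-category, exhibiting the target as a homotopy $n$-category.
\item Functoriality then comes from the strict universal property. Given $G:\mathcal C\to\mathcal C'$, the composite $\mathcal C\to\mathcal C'\to h_{\le n}(\mathcal C')$ lands in an $(n,1)$-category. It therefore factors uniquely through $h_{\le n}(\mathcal C)$, and uniqueness gives compatibility with composition and identities.
\item For (2), I would note that Definition~\ref{def:homotopy-n-category} requires only a bijection on Hom sets. \cite[Proposition~4.7.9.7]{Kerodon} upgrades this to an isomorphism $\operatorname{Fun}(h_{\le n}\mathcal C,\mathcal D)\cong\operatorname{Fun}(\mathcal C,\mathcal D)$.
\item As a sanity check on that upgrade: $\operatorname{Fun}(\Delta^m,\mathcal D)$ is again an $(n,1)$-category, and $h_{\le n}(\mathcal C)\times\Delta^m$ is a homotopy $n$-category of $\mathcal C\times\Delta^m$. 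Together these give bijectivity levelwise in $m$.
\end{itemize}

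\textbf{Item (3).} This is part (4) of Corollary~4.7.9.16: the comparison map is a local $(n-1)$-truncation, meaning essentially surjective and inducing $(n-1)$-truncations on mapping spaces. If $\mathcal C$ is already locally $(n-1)$-truncated, each induced map on mapping spaces is a truncation of an $(n-1)$-truncated space, hence a homotopy equivalence. So the comparison functor is fully faithful and essentially surjective, and therefore an equivalence.

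\textbf{Item (5).} This is the mapping-space content of the same local truncation statement. I would cite \cite[Corollary~4.7.6.24]{Kerodon} to identify the map $\operatorname{Map}_{\mathcal C}(X,Y)\to\operatorname{Map}_{h_{\le n}\mathcal C}(FX,FY)$ as exhibiting the target as $\tau_{\le n-1}$ of the source. One has to be careful that "mapping space" refers to any of the equivalent models; they agree up to homotopy equivalence.

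\textbf{Item (4).} For a Kan complex $X$, Example~4.7.9.6 shows that $u_X:X\to\pi_{\le n}(X)$ exhibits $\pi_{\le n}(X)$ as a homotopy $n$-category. It has the same universal property as $h_{\le n}(X)$, so the uniqueness statement in Definition~\ref{def:homotopy-n-category} gives an isomorphism, in particular an equivalence. The one point to check is that the $n$-groupoid $\pi_{\le n}(X)$ is an $(n,1)$-category in the strict sense. This is recorded in item (1) of Definition~\ref{def:homotopy-n-truncation}: unique fillers for horns above dimension $n$ is exactly the $(n,1)$ condition in the groupoid case.

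\textbf{Main obstacle.} I expect the only real obstacle to be the bookkeeping in (5) and (3). These require matching Kerodon's notion of local truncation with the homotopy-equivalence statement on mapping spaces, and confirming that the indexing shift is correct ($(n,1)$-category corresponds to $(n-1)$-truncated mapping spaces). I would settle this first by checking the case $n=1$, where $h_{\le1}$ is the ordinary homotopy category and its Hom sets are $\pi_0$ of the mapping spaces.
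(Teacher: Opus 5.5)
Your proposal is correct and follows the paper's proof, which likewise just invokes \cite{Kerodon}: Corollary~4.7.9.16, Proposition~4.7.9.7 and Example~4.7.9.6 for items (1)--(4), and the mapping-space criterion for local truncations (Corollary~4.7.6.24) for item (5). Your additional unpacking is consistent with those citations and spells out what the paper leaves implicit; this covers functoriality and uniqueness from the strict universal property, and the fully-faithful-plus-essentially-surjective argument in (3).
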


{
\begin{proof}
Assertions (1)--(4) follow from the cited universal properties and
local-truncation result. For (5), apply the mapping-space criterion
for a local $(n-1)$-truncation to the comparison functor
\cite[Corollary~4.7.6.24, Tag~\href{https://kerodon.net/tag/06RT}{06RT}]{Kerodon}.
\end{proof}
}

\begin{definition}[$n$-linear spaces]\label{def:n-linear-space}
\label{def:truncated-infinity-category}
Let $n\ge1$. An \emph{$n$-linear space} is an object of
$\mathcal V^{\infty}$ represented by a simplicial vector space $V$
satisfying
\[
H_i(NV)=0\qquad(i\ge n).
\]
The condition is invariant under weak equivalence, since weak
equivalences in $\mathcal V$ become quasi-isomorphisms under $N$.
Write $\mathbf{Vect}^{(n)}_k\subseteq\mathcal V^{\infty}$ for the full
$\infty$-subcategory of these objects. Its mapping spaces are those of
$\mathcal V^{\infty}$ between the indicated objects.
It is presented by the coherent nerve of the full simplicial
subcategory of $(\mathcal V,\operatorname{Map})$ on these representatives.

By Dold--Kan and Moore's theorem,
$\pi_i(\operatorname{Und}V,0)\cong H_i(NV)$.
Translation by any vertex is an automorphism of the underlying
simplicial set, so the same vanishing holds at every basepoint.
Thus the condition is equivalent to $\operatorname{Und}V$ being
$(n-1)$-truncated \cite[Proposition~3.5.7.9, Tag~\href{https://kerodon.net/tag/054V}{054V}]{Kerodon}.
\end{definition}

Every $n$-linear space has a chain-complex representative concentrated in
degrees $0,\ldots,n-1$. Indeed, the good upper truncation
$\tau_{\le n-1}NV$ agrees with $NV$ below degree $n-1$, has
$(NV)_{n-1}/\operatorname{im}d_n$ in degree $n-1$, and vanishes above.
The natural map $NV\to\tau_{\le n-1}NV$ is a quasi-isomorphism; apply
Dold--Kan to obtain the corresponding weakly equivalent simplicial
vector space. The original normalized complex may still have
contractible terms in higher degrees.

\begin{proposition}[categorical height of $n$-linear spaces]
\label{prop:n-linear-category-height}
For $n\ge1$, the mapping spaces of $\mathbf{Vect}^{(n)}_k$ are
$(n-1)$-truncated.  Consequently $\mathbf{Vect}^{(n)}_k$ is locally
$(n-1)$-truncated, and its comparison with a strict $(n,1)$-model is an equivalence
\[
\mathbf{Vect}^{(n)}_k
\xrightarrow{\ \simeq\ }
h_{\le n}\bigl(\mathbf{Vect}^{(n)}_k\bigr).
\]
For $n=1$ there is an equivalence
\[
\mathbf{Vect}^{(1)}_k\simeq N(\mathsf{Vect}_k).
\]
\end{proposition}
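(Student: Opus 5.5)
The plan has three steps: bound the homotopy of the mapping spaces, pass to homotopy $n$-categories, and then treat $n=1$ separately.

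\emph{Step 1: truncation of mapping spaces.} Let $V,W$ be $n$-linear spaces. By Definition~\ref{def:n-linear-space}, $H_i(NV)$ and $H_i(NW)$ are concentrated in $[0,n-1]$. Corollary~\ref{cor:mapping-homotopy-vanishing}, applied with $n-1$ in place of $n$, then gives $\pi_i(\operatorname{Map}(V,W),0)=0$ for $i\ge n$.

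Since $\operatorname{Map}(V,W)$ is the underlying simplicial set of a simplicial vector space, translation by any vertex is an automorphism. Hence the vanishing holds at every basepoint, exactly as argued at the end of Definition~\ref{def:n-linear-space}. By the cited criterion \cite[Proposition~3.5.7.9, Tag~\href{https://kerodon.net/tag/054V}{054V}]{Kerodon}, the Kan complex $\operatorname{Map}(V,W)$ (Kan by Proposition~\ref{prop:simplicial-vector-kan}) is therefore $(n-1)$-truncated.

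The mapping spaces of the coherent nerve of a Kan-enriched category are weakly equivalent to the enriching simplicial sets. Since $\mathbf{Vect}^{(n)}_k$ is a full subcategory, it is locally $(n-1)$-truncated.

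\emph{Step 2: comparison with the homotopy $n$-category.} Local $(n-1)$-truncatedness is exactly the hypothesis of Proposition~\ref{prop:homotopy-n-properties}(3). That result shows the comparison functor $\mathbf{Vect}^{(n)}_k\to h_{\le n}(\mathbf{Vect}^{(n)}_k)$ is an equivalence. No further computation is needed here.

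\emph{Step 3: the case $n=1$.} For $n=1$ the mapping spaces are $0$-truncated, hence homotopy discrete, so $\mathbf{Vect}^{(1)}_k$ is equivalent to the nerve of its homotopy category $h_{\le1}$. I would identify this homotopy category with $\mathsf{Vect}_k$ via the functor $V\mapsto H_0(NV)$.

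\begin{itemize}
\item \emph{Fully faithful.} Theorem~\ref{thm:mapping-homology-formula} with $i=0$ gives
\[
\pi_0\operatorname{Map}(V,W)\cong\prod_{p}\operatorname{Hom}_k\bigl(H_pNV,H_pNW\bigr)=\operatorname{Hom}_k\bigl(H_0NV,H_0NW\bigr),
\]
since only $p=0$ contributes.
\item \emph{Compatible with composition.} The isomorphism is the canonical map induced on homology, so it respects composition.
\item \emph{Essentially surjective.} Every vector space $M$ arises, via the constant simplicial vector space (Dold--Kan of $S^0M$).
\end{itemize}

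To make this a functor of $\infty$-categories rather than just a statement about homotopy categories, I would use the inclusion of constant simplicial vector spaces. This gives a simplicial functor $\mathsf{Vect}_k\to(\mathcal V,\operatorname{Map})$, hence a functor $N(\mathsf{Vect}_k)\to\mathbf{Vect}^{(1)}_k$, which the above shows is fully faithful and essentially surjective.

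I expect the main obstacle to be checking that this functor is fully faithful at the level of mapping spaces. The mapping spaces of the coherent nerve must be compared with the enriched $\operatorname{Map}$, but because they are homotopy discrete it reduces to the $\pi_0$ bijection established above.
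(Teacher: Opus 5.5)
Your proof is correct. Steps~1 and~2 follow the paper's argument. The paper reads the vanishing directly off the product formula of Theorem~\ref{thm:mapping-homology-formula} rather than its corollary, and it leaves implicit the comparison of enriched and coherent-nerve mapping spaces that you spell out.

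For $n=1$ your route is homotopical, where the paper's is strict. You compute $\pi_0\operatorname{Map}(V,W)\cong\operatorname{Hom}_k(H_0NV,H_0NW)$ and use homotopy discreteness to upgrade the inclusion $c$ of constant objects to a weak equivalence on mapping spaces. You then get essential surjectivity of $c$ from full faithfulness of $H_0$ on the homotopy category together with $H_0\circ c=\mathrm{id}$; that deduction is left implicit in your ``the above shows.''

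The paper instead observes that an $m$-simplex of $\operatorname{Map}(\underline M,\underline{M'})$ is a family of linear maps indexed by the simplices of $\Delta^m$ and constant along faces. Since $\Delta^m$ is connected, it is a single linear map. So this mapping space is literally the constant simplicial set on $\operatorname{Hom}_k(M,M')$, and the full subcategory on constant objects is on the nose $N(\mathsf{Vect}_k)$. Essential surjectivity then comes from the degree-zero representative $NV\to\tau_{\le0}NV$ recorded after Definition~\ref{def:n-linear-space}.

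The paper's route is shorter and removes the step you flag as the main obstacle, because the map on mapping spaces is an isomorphism, not merely a weak equivalence. Your route exhibits the equivalence as induced by $H_0$, which is the canonical form later used for $\mathcal E_0\simeq N(\mathsf{Vect}_k)$ in Theorem~\ref{thm:A4}.

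Note also that the first sentence of your Step~3 already finishes the proof. You have $\mathbf{Vect}^{(1)}_k\simeq h_{\le1}(\mathbf{Vect}^{(1)}_k)\cong N(h\mathbf{Vect}^{(1)}_k)$, and the nerve of the ordinary equivalence induced by $H_0$ is an equivalence of $\infty$-categories. So the detour through constant objects is not needed to obtain an $\infty$-functor.
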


\begin{proof}
For $V,W\in\mathbf{Vect}^{(n)}_k$, Theorem~
\ref{thm:mapping-homology-formula} gives
\[
\pi_i\operatorname{Map}(V,W)
\cong
\prod_{p\ge0}\operatorname{Hom}_k
\bigl(H_p(NV),H_{p+i}(NW)\bigr).
\]
If $i\ge n$, then $p+i\ge n$ for every $p\ge0$, so the right-hand side
vanishes. Translation by a vertex of the simplicial abelian mapping
space gives the same vanishing at every basepoint. Thus each mapping
space is $(n-1)$-truncated, and
Proposition~\ref{prop:homotopy-n-properties}(3) gives the comparison
equivalence. When $n=1$, every object is equivalent to a degree-zero
constant simplicial vector space. Between two such constant objects
the mapping space is strictly the constant set of linear maps, since
each $\Delta^m$ is connected. Their full subcategory is therefore
the nerve of $\mathsf{Vect}_k$ and is essentially surjective, proving
the last equivalence.
The original mapping spaces need not be literally discrete: for
$V=N^{-1}(k\xrightarrow{\mathrm{id}}k)$ in degrees $1,0$,
the normalized complex is acyclic, so the mapping-space formula makes
$\operatorname{Map}(V,V)$ contractible. Yet $V_0=k\ne0$, and the zero
map and identity act differently there; they are distinct vertices
of this contractible mapping space.
\end{proof}

\begin{construction}[the two-term model]\label{constr:two-term-linear-space}
For $n=2$, a chain-complex representative is a two-term complex
$\partial:V_1\to V_0$.  It determines an internal category in vector spaces
whose object space is $V_0$, whose arrow space is $V_0\oplus V_1$, and whose
structure maps are
\[
s(x,u)=x,
\qquad t(x,u)=x+\partial u,
\qquad e(x)=(x,0),
\]
\[
(x+\partial u,v)\circ(x,u)=(x,u+v).
\]
The identities and associativity follow from addition; the inverse
arrow to $(x,u)$ is $(x+\partial u,-u)$, so this is an internal
groupoid. Conversely, an internal category $A$ in vector spaces gives
the complex
$\ker(s:A_1\to A_0)\xrightarrow{\,t\,}A_0$.
Splitting an arrow as
$a=e(s(a))+(a-e(s(a)))$ identifies the two constructions up to natural
isomorphism \cite[Proposition~8]{BaezCrans2004}. Linear natural
transformations correspond to chain homotopies, giving the
$2$-categorical equivalence
\cite[Theorem~12]{BaezCrans2004}.
\end{construction}

\begin{proposition}[universal property of the homotopy
$n$-category]\label{prop:homotopy-n-universal}
Let $n\ge0$, let $X$ be a Kan complex, and let $\mathcal D$ be a locally
$(n-1)$-truncated $\infty$-category.  The natural map
\[
u_X^*:
\operatorname{Fun}\bigl(\pi_{\le n}(X),\mathcal{D}\bigr)
\xrightarrow{\ \simeq\ }
\operatorname{Fun}(X,\mathcal{D})
\]
is an equivalence of $\infty$-categories. If $\mathcal D$ is itself a
strict $(n,1)$-model, this map is a strict isomorphism of functor
simplicial sets.
\end{proposition}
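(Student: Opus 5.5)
First treat the strict case. Suppose $\mathcal D$ is a strict $(n,1)$-category in the sense of \cite[Definition~4.7.7.9, Tag~\href{https://kerodon.net/tag/057M}{057M}]{Kerodon}. By Definition~\ref{def:homotopy-n-truncation}(3)--(4), the tautological map $u_X:X\to\pi_{\le n}(X)$ exhibits $\pi_{\le n}(X)$ as the homotopy $n$-category of the $\infty$-category $X$. By Definition~\ref{def:homotopy-n-category} and \cite[Proposition~4.7.9.7, Tag~\href{https://kerodon.net/tag/059D}{059D}]{Kerodon}, precomposition with $u_X$ is therefore a strict isomorphism of simplicial sets $\operatorname{Fun}(\pi_{\le n}(X),\mathcal D)\cong\operatorname{Fun}(X,\mathcal D)$. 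This proves the last sentence and requires no further work.

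For a general locally $(n-1)$-truncated $\mathcal D$, I would reduce to the strict case by replacing $\mathcal D$ with its homotopy $n$-category. Let $F:\mathcal D\to h_{\le n}(\mathcal D)$ be the comparison functor. It is a strict $(n,1)$-category, and by Proposition~\ref{prop:homotopy-n-properties}(3) the functor $F$ is an equivalence of $\infty$-categories, because $\mathcal D$ is locally $(n-1)$-truncated. Naturality of precomposition and postcomposition gives a commutative square
\[
\begin{CD}
\operatorname{Fun}(\pi_{\le n}(X),\mathcal D) @>{u_X^*}>> \operatorname{Fun}(X,\mathcal D)\\
@V{F_*}VV @VV{F_*}V\\
\operatorname{Fun}(\pi_{\le n}(X),h_{\le n}\mathcal D) @>{u_X^*}>> \operatorname{Fun}(X,h_{\le n}\mathcal D).
\end{CD}
\]
The bottom map is an isomorphism by the strict case. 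Both vertical maps are equivalences, since postcomposition with an equivalence of $\infty$-categories induces an equivalence of functor $\infty$-categories for any simplicial-set source \cite[Corollary~4.5.2.24]{Kerodon}; the cited result is the standard fact that $\operatorname{Fun}(K,-)$ preserves categorical equivalences. Two-out-of-three then shows that the top map $u_X^*$ is an equivalence.

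The main obstacle is not the universal property itself but the mismatch in hypotheses. Definition~\ref{def:homotopy-n-category} is stated only for strict $(n,1)$-categories, whereas $\mathcal D$ here, and $\mathbf{Vect}^{(n)}_k$ in the intended application, is only locally $(n-1)$-truncated; the earlier example of the two-object contractible groupoid shows that this gap is real. The argument therefore rests on two inputs: that $h_{\le n}(\mathcal D)$ is strict, which is Proposition~\ref{prop:homotopy-n-properties}(1), and that the comparison $F$ is an honest categorical equivalence, which is item~(3). If citing (3) were unavailable, I would instead use (5) to show that $F$ is fully faithful, since each $\operatorname{Map}_{\mathcal D}$ is already $(n-1)$-truncated, and prove essential surjectivity from the fact that $F$ is bijective on vertices. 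The case $n=0$ goes through the same way, with locally $(-1)$-truncated meaning that mapping spaces are empty or contractible, and $\pi_{\le 0}(X)=\pi_0(X)$ discrete.
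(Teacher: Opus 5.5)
Your proposal is correct and follows essentially the same route as the paper. The strict case is read off from Kerodon's Example~4.7.9.6 and Proposition~4.7.9.7, and the general case uses the same square of postcompositions with the equivalence $\mathcal D\to h_{\le n}\mathcal D$ from Proposition~\ref{prop:homotopy-n-properties}(3), followed by two-out-of-three; the only difference is that the paper justifies the vertical equivalences directly, by noting that postcomposition carries a quasi-inverse and its equivalence homotopies to the functor categories, rather than by citation.
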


\begin{proof}
For a strict target this is
\cite[Example~4.7.9.6, Tag~\href{https://kerodon.net/tag/059C}{059C}]{Kerodon} and
\cite[Proposition~4.7.9.7, Tag~\href{https://kerodon.net/tag/059D}{059D}]{Kerodon}.
In general, Proposition~\ref{prop:homotopy-n-properties}(3) gives an
equivalence $e:\mathcal D\to h_{\le n}\mathcal D$.
Postcomposition with $e$ gives the vertical equivalences in
\[
\begin{CD}
\operatorname{Fun}(\pi_{\le n}X,\mathcal D) @>{u_X^*}>>
\operatorname{Fun}(X,\mathcal D)\\
@V{e_*}V{\simeq}V @V{e_*}V{\simeq}V\\
\operatorname{Fun}(\pi_{\le n}X,h_{\le n}\mathcal D) @>{u_X^*}>>
\operatorname{Fun}(X,h_{\le n}\mathcal D).
\end{CD}
\]
The bottom functor is an isomorphism, so the top functor is an equivalence.
Here postcomposition preserves equivalences by carrying a quasi-inverse
and its equivalence homotopies to the corresponding functor categories.
\end{proof}

\begin{lemma}[precomposition by an equivalence]\label{lem:precomp-equivalence}
Let $g:A\to B$ be a homotopy equivalence of Kan complexes. Then, for
every $\infty$-category $\mathcal{D}$, precomposition
\[
g^*:
\operatorname{Fun}(B,\mathcal{D})
\longrightarrow
\operatorname{Fun}(A,\mathcal{D})
\]
is an equivalence of $\infty$-categories.
\end{lemma}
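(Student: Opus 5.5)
The plan is to reduce to a formal statement about equivalences in the $(\infty,2)$-category of $\infty$-categories, using that a homotopy equivalence of Kan complexes has a homotopy inverse.

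First, choose a homotopy inverse $h:B\to A$. Since $A$ and $B$ are Kan complexes, $g$ being a homotopy equivalence provides homotopies
\[
\alpha:\Delta^1\times A\to A,\qquad \beta:\Delta^1\times B\to B,
\]
from $\operatorname{id}_A$ to $hg$ and from $\operatorname{id}_B$ to $gh$, respectively. Equivalently, $\alpha$ and $\beta$ are edges of $\operatorname{Fun}(A,A)$ and $\operatorname{Fun}(B,B)$. Because the targets are Kan complexes, these functor simplicial sets are Kan complexes \cite[Corollary~3.1.3.4]{Kerodon}, so every edge in them is an isomorphism.

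Next, I would apply precomposition. The candidate quasi-inverse to $g^*$ is $h^*:\operatorname{Fun}(A,\mathcal D)\to\operatorname{Fun}(B,\mathcal D)$. The homotopy $\alpha$ induces a map
\[
\Delta^1\times\operatorname{Fun}(A,\mathcal D)\to\operatorname{Fun}(A,\mathcal D),
\]
obtained by adjunction from the composite
\[
\Delta^1\times\operatorname{Fun}(A,\mathcal D)\times A\to\operatorname{Fun}(A,\mathcal D)\times A\to\mathcal D .
\]
Here the first map applies $\alpha$ after using the diagonal in the $\Delta^1$ and $A$ variables; more cleanly, it is precomposition with $\alpha$, defined via the map $\operatorname{Fun}(A,A)\times\operatorname{Fun}(A,\mathcal D)\to\operatorname{Fun}(A,\mathcal D)$ given by composition. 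This is a natural transformation from $\operatorname{id}$ to $(hg)^*=g^*h^*$. The symmetric construction with $\beta$ gives a natural transformation from $\operatorname{id}$ to $h^*g^*$.

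The main point, and the only nontrivial step, is that these natural transformations are natural isomorphisms. For this I would use that the composition map
\[
\operatorname{Fun}(A,A)\times\operatorname{Fun}(A,\mathcal D)\to\operatorname{Fun}(A,\mathcal D)
\]
is a functor of $\infty$-categories, so it carries a pair consisting of an isomorphism in $\operatorname{Fun}(A,A)$ and an identity edge to an isomorphism. Since $\alpha$ is an isomorphism in the Kan complex $\operatorname{Fun}(A,A)$, its image in $\operatorname{Fun}(\operatorname{Fun}(A,\mathcal D),\operatorname{Fun}(A,\mathcal D))$ is an isomorphism. Alternatively, one can check componentwise, using the criterion that a natural transformation is an isomorphism if each component is \cite[Theorem~4.4.4.4]{Kerodon}: each component is $F\circ\alpha_a$, a functor applied to an edge of a Kan complex, hence an isomorphism in $\mathcal D$. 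I expect this componentwise check to be the cleanest route. With both composites isomorphic to the identities, $g^*$ is an equivalence with inverse $h^*$.

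Alternatively, one could cite directly that $g$ is a categorical equivalence of simplicial sets, because weak homotopy equivalences between Kan complexes are categorical equivalences, and that $\operatorname{Fun}(-,\mathcal D)$ carries categorical equivalences to equivalences \cite[Tag~01EX]{Kerodon}. I would mention this as the one-line proof and keep the argument above as the explicit version.
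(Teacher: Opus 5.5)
Your proposal is correct and is essentially the paper's argument: the paper notes (citing Kerodon Example~4.5.1.13) that $g$ is an equivalence of $\infty$-categories, then takes a homotopy inverse $h$ and observes that the homotopies $hg\simeq\operatorname{id}_A$ and $gh\simeq\operatorname{id}_B$ induce natural equivalences making $h^*$ a quasi-inverse to $g^*$. Your version adds the step the paper leaves implicit, namely why $\operatorname{id}\to g^*h^*$ and $\operatorname{id}\to h^*g^*$ are invertible: either the curried composition functor sends the invertible edge $\alpha$ of the Kan complex $\operatorname{Fun}(A,A)$ to an isomorphism, or one checks invertibility componentwise with the pointwise criterion.
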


\begin{proof}
By \cite[Example~4.5.1.13, Tag~\href{https://kerodon.net/tag/01E2}{01E2}]{Kerodon}, $g$ is an
equivalence of $\infty$-categories. More explicitly, choose a
simplicial homotopy inverse $h:B\to A$. The homotopies
$hg\simeq\operatorname{id}_A$ and $gh\simeq\operatorname{id}_B$
induce natural equivalences between the composites of $g^*$ and
$h^*$ and the respective identity functors. Thus $h^*$ is a
quasi-inverse to $g^*$.
\end{proof}

\subsection{Homotopy coherent nerve and local Kan simplicial categories}\label{sec:hc-nerve}

\begin{definition}\label{def:locally-kan}
A simplicial category $\mathcal{C}$ is locally Kan if each mapping
space is a Kan complex.

A simplicial functor $F:\mathcal C\to\mathcal D$ is a
\emph{Dwyer--Kan equivalence} if
\begin{enumerate}
\item for every $X,Y\in\mathcal C$, the map
  $\operatorname{Map}_{\mathcal C}(X,Y)\to
  \operatorname{Map}_{\mathcal D}(FX,FY)$ is a weak homotopy
  equivalence; and
\item the induced functor $hF:h\mathcal C\to h\mathcal D$ is essentially
  surjective.
\end{enumerate}
Here $h\mathcal C$ is the category with the same objects as
$\mathcal C$ and morphisms
$\pi_0\operatorname{Map}_{\mathcal C}(X,Y)$
\cite[Construction~2.4.6.1, Tag~\href{https://kerodon.net/tag/00LX}{00LX}]{Kerodon}; composition descends from the
simplicial composition maps. The two conditions above are the
weakly fully faithful and weakly essentially surjective conditions
of \cite[Definition~4.8.4.8, Tag~\href{https://kerodon.net/tag/01Q0}{01Q0}]{Kerodon}. A Dwyer--Kan
equivalence is also called a weak equivalence of simplicial categories.
\end{definition}

Here and below, $\operatorname{Path}$ denotes the left
adjoint path simplicial category of the homotopy coherent nerve
\cite[Section~1.1.5, p.~23]{HTT}.

\begin{proposition}\label{prop:hc-nerve-properties}
Let $\mathcal{C}$ be a locally Kan simplicial category.
\begin{enumerate}
\item $N^{\mathrm{hc}}(\mathcal{C})$ is an $\infty$-category
  \cite[Theorem~2.4.5.1, Tag~\href{https://kerodon.net/tag/00LJ}{00LJ}]{Kerodon}.
\item For every $X,Y\in\mathcal C$, the canonical comparison map
  \[
  \operatorname{Map}_{\mathcal C}(X,Y)
  \longrightarrow
  \operatorname{Map}_{N^{\mathrm{hc}}(\mathcal C)}(X,Y)
  \]
  is a homotopy equivalence of Kan complexes
  \cite[Remark~4.6.7.6, Tag~\href{https://kerodon.net/tag/01LF}{01LF}]{Kerodon}.
\item If $\mathcal D$ is another locally Kan simplicial category and
  $F:\mathcal C\to\mathcal D$ is a simplicial functor, then
  $N^{\mathrm{hc}}(F)$ is an equivalence of $\infty$-categories if and
  only if $F$ is a Dwyer--Kan equivalence
  \cite[Corollary~4.8.4.9, Tag~\href{https://kerodon.net/tag/01Q1}{01Q1}]{Kerodon}.
\item The counit
  \[
  \operatorname{Path}\bigl[N^{\mathrm{hc}}(\mathcal{C})\bigr]
  \to\mathcal{C}
  \]
  is a Dwyer--Kan equivalence \cite[Theorem~2.2.0.1]{HTT}.
\end{enumerate}
\end{proposition}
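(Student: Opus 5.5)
This proposition collects four standard facts about the homotopy coherent nerve, each with a cited source, so the plan is to reduce each item to the cited result and check that its hypotheses are met by a locally Kan simplicial category.

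For (1), I will use that $N^{\mathrm{hc}}$ is right adjoint to $\operatorname{Path}$. An inner horn $\Lambda^m_j\to N^{\mathrm{hc}}(\mathcal C)$ therefore corresponds to a simplicial functor $\operatorname{Path}[\Lambda^m_j]\to\mathcal C$. The inclusion $\operatorname{Path}[\Lambda^m_j]\to\operatorname{Path}[\Delta^m]$ changes only the mapping space between $0$ and $m$, and only by a map of the form $\Lambda^{m-1}_{j}\times\ldots\to\Delta^{m-1}\times\ldots$, which is anodyne. Kan mapping spaces then supply the extension. This is exactly Kerodon Theorem~2.4.5.1.

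For (2), I will compare the strict mapping space $\operatorname{Map}_{\mathcal C}(X,Y)$ with the right-pinched morphism space $\operatorname{Hom}^{\mathrm R}$ of $N^{\mathrm{hc}}(\mathcal C)$. There is a natural map from the former to the latter induced by the cubes that appear in $\operatorname{Path}[\Delta^m]$. Kerodon Remark~4.6.7.6 identifies this map as a homotopy equivalence once the mapping spaces are Kan, and all models of mapping spaces in an $\infty$-category are homotopy equivalent.

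For (3), I will deduce the statement from (2) together with the identification $h N^{\mathrm{hc}}(\mathcal C)\cong h\mathcal C$. Full faithfulness of $N^{\mathrm{hc}}(F)$ is equivalent to condition (1) of Definition~\ref{def:locally-kan}, by naturality of the comparison in (2). Essential surjectivity is detected on homotopy categories, which matches condition (2). An $\infty$-categorical functor is an equivalence if and only if it is fully faithful and essentially surjective, which gives Kerodon Corollary~4.8.4.9.

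For (4), I will quote Lurie's theorem that $(\operatorname{Path},N^{\mathrm{hc}})$ is a Quillen equivalence between the Joyal and Bergner model structures (HTT Theorem~2.2.5.1). Every locally Kan simplicial category is Bergner-fibrant, so the derived counit at $\mathcal C$ is a weak equivalence. Since every simplicial set is Joyal-cofibrant, the derived counit is the ordinary counit, which is HTT Theorem~2.2.0.1.

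The main obstacle is (2): showing that the comparison map is natural and an equivalence requires the combinatorics of the cubes in $\operatorname{Path}[\Delta^m]$. I will cite this step rather than reprove it.
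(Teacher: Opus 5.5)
Your treatment of (1)--(3) matches the paper's. Each item is the cited Kerodon result, and for (3) the paper likewise combines the mapping-space comparison of (2) with essential surjectivity on homotopy categories.

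There is one small slip in your sketch of (1). On the $(0,m)$ mapping space, the inner-horn inclusion is the inclusion into the cube $(\Delta^1)^{m-1}$ of its boundary with one open face deleted. This is the pushout-product of an endpoint inclusion $\Delta^0\hookrightarrow\Delta^1$ with $\partial(\Delta^1)^{m-2}\hookrightarrow(\Delta^1)^{m-2}$, not a horn times something. It is still anodyne, so the argument stands.

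For (4) you take a different route.
\begin{itemize}
\item \emph{The paper's route.} It reads (4) directly off HTT Theorem~2.2.0.1. The counit $\operatorname{Path}[N^{\mathrm{hc}}(\mathcal C)]\to\mathcal C$ is bijective on objects and a weak homotopy equivalence on every mapping space. So it meets both clauses of Definition~\ref{def:locally-kan}, with essential surjectivity automatic.
\item \emph{Your route.} You deduce it from the Joyal--Bergner Quillen equivalence (HTT Theorem~2.2.5.1). Since $\mathcal C$ is Bergner-fibrant and $N^{\mathrm{hc}}(\mathcal C)$ is Joyal-cofibrant, the counit is adjoint to the identity. It is therefore a Bergner weak equivalence, i.e.\ a Dwyer--Kan equivalence.
\end{itemize}

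Your deduction is valid if Theorem~2.2.5.1 is taken as a black box. It is not independent evidence, however. In HTT the categorical (Joyal) equivalences are defined through $\operatorname{Path}$. The Quillen-equivalence half of Theorem~2.2.5.1 is proved precisely by showing that this counit is an equivalence, via Theorem~2.2.0.1. So your route uses heavier machinery and ultimately rests on the same result the paper cites directly.
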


\begin{proof}
Assertion (1) is the cited inner-horn theorem. Assertion (2) is the
cited comparison of enriched and intrinsic mapping spaces, and
assertion (3) combines that comparison with essential surjectivity.
For (4), $\operatorname{Path}[N^{\mathrm{hc}}(\mathcal C)]$ has the
same objects as $\mathcal C$, and \cite[Theorem 2.2.0.1]{HTT} says that
the counit is a weak homotopy equivalence on every mapping space. It is
therefore a Dwyer--Kan equivalence by Definition~\ref{def:locally-kan}.
\end{proof}

\begin{remark}\label{rem:kerodon-numbering}
The version convention and stable locators for Kerodon citations are
listed in Appendix A.2.
\end{remark}

\subsection{Enriched categories, bifunctors, and ends}\label{sec:enriched-ends}

\begin{definition}[$\mathbf{V}$-enriched categories, functors, and
bifunctors]\label{def:enriched-monoidal}

Let $\mathbf{V}$ be a closed symmetric monoidal category with unit $I$.
\begin{enumerate}
\item A $\mathbf{V}$-enriched category $\mathcal{C}$ consists of objects,
  a $\mathbf{V}$-object
  $\underline{\operatorname{Hom}}_{\mathcal{C}}(x,y)$ for each pair
  $x,y$, composition maps
  \[
  \underline{\operatorname{Hom}}_{\mathcal{C}}(y,z)\otimes
  \underline{\operatorname{Hom}}_{\mathcal{C}}(x,y)
  \longrightarrow
  \underline{\operatorname{Hom}}_{\mathcal{C}}(x,z),
  \]
  and identity maps
  $I\to\underline{\operatorname{Hom}}_{\mathcal{C}}(x,x)$,
  subject to the associativity and unit axioms of
  \cite[Section 1.2]{Kelly1982}.
\item A $\mathbf{V}$-enriched functor
  $F:\mathcal{C}\to\mathcal{D}$ between $\mathbf{V}$-enriched categories
  is an assignment $x\mapsto F(x)$ on objects together with
  $\mathbf{V}$-maps
  \[
  \underline{\operatorname{Hom}}_{\mathcal{C}}(x,y)
  \longrightarrow
  \underline{\operatorname{Hom}}_{\mathcal{D}}(F(x),F(y))
  \]
  compatible with composition and identities, i.e.\ satisfying the
  axioms of \cite[Section 1.2]{Kelly1982}. A $\mathbf{V}$-natural
  transformation $\alpha:F\to G$ between $\mathbf{V}$-enriched functors
  is a family
  \[
  \alpha_x:I\to
  \underline{\operatorname{Hom}}_{\mathcal{D}}(F(x),G(x))
  \]
  satisfying the $\mathbf{V}$-naturality condition of
  \cite[Section 1.2]{Kelly1982}; a $\mathbf{V}$-natural isomorphism is
  one whose every component is invertible in the underlying category
  $\mathcal D_0$, whose hom-sets are
  $\mathbf V(I,\underline{\operatorname{Hom}}_{\mathcal D}(-,-))$.
  The tensor product $\mathcal C\otimes\mathcal D$ has objects $(x,y)$
  and hom-objects
  \[
  (\mathcal C\otimes\mathcal D)((x,y),(x',y'))
  :=\underline{\operatorname{Hom}}_{\mathcal{C}}(x,x')\otimes
  \underline{\operatorname{Hom}}_{\mathcal{D}}(y,y')
  \]
  with composition using the symmetry of $\mathbf V$. A
  $\mathbf V$-enriched bifunctor is a $\mathbf V$-functor
  $T:\mathcal C\otimes\mathcal D\to\mathcal E$; equivalently, its maps
  from the displayed hom-objects to the hom-objects of $\mathcal E$
  satisfy the joint composition and unit axioms. These are the
  conventions of \cite[Section 1.4]{Kelly1982}.
\end{enumerate}
The enriched opposite $\mathcal C^{\mathrm{op}}$ has the same objects and
$\underline{\operatorname{Hom}}_{\mathcal C^{\mathrm{op}}}(x,y)
:=\underline{\operatorname{Hom}}_{\mathcal C}(y,x)$, with composition
defined using the symmetry of $\mathbf V$.
\end{definition}

\begin{definition}[enriched end]\label{def:enriched-end}

Let $\mathbf{V}$ be a complete closed symmetric monoidal category, with internal
Hom denoted by $\underline{\operatorname{Hom}}_{\mathbf{V}}$, let
$\mathcal{C}$ be a small $\mathbf{V}$-enriched category and let
$T:\mathcal{C}^{\mathrm{op}}\otimes\mathcal{C}\to\mathbf{V}$ be a
$\mathbf{V}$-enriched bifunctor. Completeness and closedness of
$\mathbf V$, together with smallness of $\mathcal C$, ensure that the
$\mathbf{V}$-valued enriched end of $T$ exists and is the
equalizer
\[
\int_{x\in\mathcal{C}}
T(x,x)
\longrightarrow
\prod_{x\in\mathcal{C}}
T(x,x)
\rightrightarrows
\prod_{x,y\in\mathcal{C}}
\underline{\operatorname{Hom}}_{\mathbf{V}}\bigl(
\underline{\operatorname{Hom}}_{\mathcal{C}}(x,y),
T(x,y)
\bigr),
\]
where the two parallel arrows are induced by the covariant action
$T(x,-)$ and the contravariant action $T(-,y)$ of $\mathcal{C}$;
equivalently, it is the object
\[
Z:=\int_{x\in\mathcal{C}}
T(x,x)
\]
equipped with a universal $\mathbf{V}$-natural wedge
\[
\lambda_x:Z\to T(x,x)
\]
(\cite[Section 2.1]{Kelly1982}).

For $\mathbf{V}$-enriched functors $F,G:\mathcal{C}\to\mathcal{D}$,
their enriched natural-transformation object is the end
\[
\int_{x\in\mathcal{C}}
\underline{\operatorname{Hom}}_{\mathcal{D}}\bigl(F(x),G(x)\bigr)
\]
obtained by taking
$T(x,y)=\underline{\operatorname{Hom}}_{\mathcal{D}}(F(x),G(y))$.
We use this construction with $\mathbf{V}=\mathcal{V}$ of
Definition~\ref{def:category-svect} with the closed symmetric monoidal
structure of Definition~\ref{def:simplicial-vector-space}, and with
$\mathbf V=\mathsf{sSet}$ under its cartesian monoidal structure
for simplicial functor categories.
\end{definition}

\section{Classifying spaces and $n$-representation categories}\label{sec:classifying-spaces}

This section introduces the classifying space of a simplicial group and
the $n$-representation categories studied in the rest of the paper.

\subsection{Classifying spaces}\label{sec:classifying-space-basic}

\begin{definition}\label{def:BG}
Let $G$ be a simplicial group. The simplicial category
$B^{\mathrm{simp}}G$ has a single object $\ast$ and
\[
B^{\mathrm{simp}}G(\ast,\ast):=G.
\]
Composition is degreewise multiplication in $G$, and the identity is
its unit in each degree. A homomorphism $G\to H$ induces a simplicial
functor $B^{\mathrm{simp}}G\to B^{\mathrm{simp}}H$. The classifying
space is
\[
\mathbf{B}G
:=
N^{\mathrm{hc}}\bigl(B^{\mathrm{simp}}G\bigr).
\]
\end{definition}

\begin{remark}\label{rem:bg-kan-wbar}

The simplicial set $\mathbf{B}G$ has exactly one vertex, since the
$0$-simplices of $N^{\mathrm{hc}}(B^{\mathrm{simp}}G)$ are the objects
of $B^{\mathrm{simp}}G$. It is a Kan complex. Indeed, the mapping
simplicial set $B^{\mathrm{simp}}G(\ast,\ast)=G$ is Kan by Moore's
theorem (\cite[Lemma I.3.4]{GoerssJardine}), so
$N^{\mathrm{hc}}(B^{\mathrm{simp}}G)$ is an $\infty$-category by
Proposition~\ref{prop:hc-nerve-properties}(1). By
Proposition~\ref{prop:hc-nerve-properties}(2), its homotopy category
has one object and endomorphism group
$\pi_0\operatorname{Map}_{B^{\mathrm{simp}}G}(\ast,\ast)=\pi_0(G)$;
hence it is a groupoid. By
\cite[Proposition 1.2.5.1]{HTT} the coherent nerve is a Kan complex.
Thus $\mathbf{B}G$ is a connected Kan complex.

The comparison with the classical bar construction is a natural
zigzag of weak equivalences.
Arakawa's \cite[Theorem~3.6]{Arakawa2023} compares its bar construction
with its version $N_{\mathrm{alt}}$ of the coherent nerve; the
left--right convention for $\overline W G$ is reconciled by passing
to $G^{\mathrm{op}}$ and using the natural inversion isomorphism
\cite[Remark~2.2]{Arakawa2023}. To pass to the present
$N^{\mathrm{hc}}=N_{\mathrm{std}}$, use the natural weak equivalence of
simplicial groups $G\to\operatorname{Sing}|G|$. It induces weak
equivalences on both coherent nerves, and the two nerve conventions
are naturally isomorphic for $B^{\mathrm{simp}}\operatorname{Sing}|G|$
\cite[Introduction, footnote~1]{Arakawa2023}. Every intermediate
nerve is Kan by the preceding argument. Hence the zigzag consists
of weak equivalences between Kan complexes and identifies
$\mathbf BG$ naturally with the homotopy type of $\overline W G$.

\end{remark}

\textbf{Convention.} For $m\ge1$, a map $f:G\to H$ of
simplicial groups is called an $m$-equivalence if $\pi_i(f)$ is an
isomorphism for $0\le i<m$ and $\pi_m(f)$ is surjective; in degree
zero this means an isomorphism of the component groups $\pi_0G$
and $\pi_0H$. The same term is used for a map between connected
Kan complexes when it
induces isomorphisms on $\pi_i$ for $0\le i<m$ and a surjection on
$\pi_m$, at the specified basepoints. This convention is used throughout
the paper.

\begin{proposition}\label{prop:bg-homotopy}
Let $f:G\to H$ be a map of simplicial groups.
\begin{enumerate}
\item $\pi_k(\mathbf{B}G)\cong\pi_{k-1}(G)$ for $k\ge1$.
\item $\pi_0(\mathbf{B}f)$ is an isomorphism.
\item $\pi_k(\mathbf{B}f)\cong\pi_{k-1}(f)$ for $k\ge1$.
\item for every $n\ge0$, by the convention stated above, $f$ is an
  $(n+1)$-equivalence if and only if $\mathbf{B}f$ is an
  $(n+2)$-equivalence.
\end{enumerate}
\end{proposition}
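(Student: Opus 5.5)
The plan is to compare $\mathbf BG=N^{\mathrm{hc}}(B^{\mathrm{simp}}G)$ with its path-space fibration. This gives a long exact sequence, and part (1) follows from it. Parts (2)--(4) then follow by naturality. For (1), I will identify the loop space of $\mathbf BG$ at its unique vertex with $G$ itself. Proposition~\ref{prop:hc-nerve-properties}(2) gives a homotopy equivalence of Kan complexes
\[
G=\operatorname{Map}_{B^{\mathrm{simp}}G}(\ast,\ast)\longrightarrow
\operatorname{Map}_{\mathbf BG}(\ast,\ast).
\]
Since $\mathbf BG$ is a Kan complex (Remark~\ref{rem:bg-kan-wbar}), the intrinsic mapping space $\operatorname{Map}_{\mathbf BG}(\ast,\ast)$ is a model for the based loop space $\Omega_\ast\mathbf BG$. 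It is, for instance, the fiber over $(\ast,\ast)$ of $\operatorname{Fun}(\Delta^1,\mathbf BG)\to\mathbf BG\times\mathbf BG$.

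Next I use the fibration $\operatorname{Fun}(\Delta^1,\mathbf BG)\times_{\mathbf BG}\{\ast\}\to\mathbf BG$. Its total space is contractible and its fiber is $\Omega_\ast\mathbf BG$. The long exact sequence gives $\pi_k(\mathbf BG,\ast)\cong\pi_{k-1}(\Omega_\ast\mathbf BG,\mathrm{id})$ for $k\ge1$. For $k=1$ this is a bijection of sets, and I will check that it is a group isomorphism, compatible with composition in the homotopy category. The basepoint of $G$ here is the unit $e$. Translation in the simplicial group shows that all components of $G$ have isomorphic homotopy groups, so the choice of basepoint does not matter. Combining the two identifications gives $\pi_k(\mathbf BG)\cong\pi_{k-1}(G,e)$. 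As a cross-check, the zigzag with $\overline WG$ in Remark~\ref{rem:bg-kan-wbar} gives the same answer through the classical fibration $G\to WG\to\overline WG$.

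For (2), both $\mathbf BG$ and $\mathbf BH$ have exactly one vertex, so $\pi_0$ is a point on each side and $\pi_0(\mathbf Bf)$ is trivially a bijection. For (3), I will check that the isomorphisms of (1) are natural in $f$. The comparison map of Proposition~\ref{prop:hc-nerve-properties}(2) is natural in simplicial functors, and the path fibration is functorial. Hence the square relating $\pi_{k-1}(f)$ and $\pi_k(\mathbf Bf)$ commutes.

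Part (4) is then bookkeeping. The map $f$ is an $(n+1)$-equivalence when $\pi_i(f)$ is an isomorphism for $i\le n$ and $\pi_{n+1}(f)$ is surjective. By (3), this is equivalent to $\pi_k(\mathbf Bf)$ being an isomorphism for $1\le k\le n+1$ and surjective for $k=n+2$. Adding the automatic $\pi_0$ isomorphism from (2), this says exactly that $\mathbf Bf$ is an $(n+2)$-equivalence.

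The main obstacle is the identification in (1) of $\operatorname{Map}_{\mathbf BG}(\ast,\ast)$ with the based loop space. In particular, the $k=1$ case must respect the group structures and not only the underlying sets. I would handle this first by citing the Kerodon description of morphism spaces in a Kan complex as path-space fibers. For the group structure, I would use that composition in $h\mathbf BG$ agrees with multiplication on $\pi_0(G)$, as already noted in Remark~\ref{rem:bg-kan-wbar}.
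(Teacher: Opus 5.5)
Your proposal is correct, but it proves (1) and (3) by a different route from the paper. The paper's proof does not use the mapping-space comparison of the coherent nerve. It computes with Arakawa's reduced bar construction $\overline W_{\mathrm{Ar}}G$ in three steps:
\begin{enumerate}
\item It takes the isomorphisms for $k\ge2$ from the principal fibration $G\to WG\to\overline W_{\mathrm{Ar}}G$, whose total space is contractible.
\item It checks the degree-one group law by hand, using the $2$-simplices $(a,z)$ with faces $a$, $a\,d_0z$, $d_1z$.
\item It transports everything to $\mathbf BG$ along the natural Kan zigzag of Remark~\ref{rem:bg-kan-wbar}.
\end{enumerate}

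You stay inside $N^{\mathrm{hc}}(B^{\mathrm{simp}}G)$ instead. You use the homotopy equivalence of Proposition~\ref{prop:hc-nerve-properties}(2) and the path fibration of the Kan complex $\mathbf BG$. For the group law at $k=1$, you use the composition-preserving identification of $h\mathbf BG$ with $hB^{\mathrm{simp}}G$. It is worth stating explicitly that $\pi_1$ of a Kan complex is the endomorphism group of the vertex in its homotopy category. Then the long-exact-sequence bijection at $k=1$ needs no separate multiplicativity check.

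Your route avoids the convention bookkeeping of the zigzag: passing through $\operatorname{Sing}|G|$, passing to $G^{\mathrm{op}}$, and reconciling the two nerve conventions. Naturality in $f$ follows directly from functoriality of $N^{\mathrm{hc}}$, of the comparison map, and of the long exact sequence. The paper's route gives an explicit classical model in which the noncommutative $\pi_1$ is visible on simplices, and it identifies $\mathbf BG$ with $\overline WG$ at the same time.

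One small simplification: the comparison map is the identity on vertices, so it sends the unit $e$ to the degenerate edge at $\ast$. That edge is the basepoint of the fiber in your long exact sequence, so your translation argument for changing basepoints in $G$ is not needed.
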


\begin{proof}

All homotopy groups are based at the identity vertex of $G$ and the
unique vertex of its classifying spaces. Use the right-action bar
construction of \cite[Construction~2.1 and Corollary~2.7]{Arakawa2023}.
Its principal fibration
$G\to WG\to\overline W_{\mathrm{Ar}}G$ has contractible total space.
The long exact homotopy sequence therefore gives natural group
isomorphisms
$\pi_k(\overline W_{\mathrm{Ar}}G)\cong\pi_{k-1}(G)$ for $k\ge2$.

The degree-one case requires checking the noncommutative group law.
In Arakawa's reduced bar construction a $1$-simplex is $a\in G_0$,
and a $2$-simplex $(a,z)\in G_0\times G_1$ has faces
$a$, $a\,d_0z$, and $d_1z$. Its fundamental-group relation is
$[a\,d_0z]=[a][d_1z]$. Taking $z=s_0b$ gives
$[ab]=[a][b]$, while taking $a=1$ gives
$[d_0z]=[d_1z]$. Hence the labeling map factors as a group
isomorphism $\pi_0G\cong\pi_1(\overline W_{\mathrm{Ar}}G)$:
the $1$-simplices supply generators and these $2$-simplices give
exactly the component and multiplication relations. The
construction is natural in $G$.

The natural Kan weak-equivalence zigzag of
Remark~\ref{rem:bg-kan-wbar} transports these isomorphisms to
$\mathbf BG$, proving (1), and identifies the map induced by $f$
with $\pi_{k-1}(f)$ for every $k\ge1$, proving (3).
Both $\mathbf{B}G$ and $\mathbf{B}H$ have exactly one vertex, so
$\pi_0(\mathbf{B}f):\ast\to\ast$ is automatically an isomorphism; this
is (2). Finally, by the convention stated before this
proposition, items (2) and (3) translate the defining conditions for
$f$ being an $(n+1)$-equivalence into the defining conditions for
$\mathbf{B}f$ being an $(n+2)$-equivalence. This proves (4).
\end{proof}

\subsection{$n$-representations}
\label{sec:truncated-rep-spaces}

Recall from Definition~\ref{def:n-linear-space} that
$\mathbf{Vect}^{(n)}_k$ is the coefficient target
for height $n$.

\begin{definition}[$n$-representation $\infty$-category]
\label{def:n-representation-category}
Let $n\ge1$ and let $G$ be any simplicial group.  The $\infty$-category of
$n$-representations of $G$ is
\[
\mathbf{Rep}^{(n)}_k(G)
:=
\operatorname{Fun}\bigl(\mathbf BG,\mathbf{Vect}^{(n)}_k\bigr).
\]
Here $\mathbf BG$ is a Kan complex by Remark~\ref{rem:bg-kan-wbar}, and
$\mathbf{Vect}^{(n)}_k$ is the full $\infty$-subcategory of
Definition~\ref{def:n-linear-space}, so the functor category is defined.
When $G$ is a homotopy $n$-group, we call its objects the
$n$-representations of the homotopy $n$-group $G$. No truncation
hypothesis on $G$ is needed for the definition.
\end{definition}

\noindent\textbf{Failure of closure under the levelwise tensor product.}
For $n\ge2$, $\mathbf{Vect}^{(n)}_k$ is not closed under the
levelwise tensor inherited from $\mathcal V$. Indeed, for
$X=N^{-1}(S^{n-1}k)$, Remark~\ref{rem:dold-kan-monoidal} gives
\[
N(X\otimes_{\mathcal V}X)\simeq
S^{n-1}k\otimes_k S^{n-1}k\cong S^{2n-2}k.
\]
Since $2n-2\ge n$, the tensor lies outside the prescribed homology range.
For $n=1$, the degree-zero homology range is closed under this tensor:
the field-valued Künneth formula and the normalized shuffle show that
the tensor of two complexes with homology only in degree zero again
has homology only in degree zero.
A separately defined truncated tensor would require its own coherence
construction.

\begin{definition}\label{def:full-rep-cat}
The untruncated representation $\infty$-category is
\[
\mathcal{R}_k(G)
:=
\operatorname{Fun}\bigl(\mathbf{B}G,\mathcal{V}^{\infty}\bigr).
\]
This is defined for every simplicial group $G$ and imposes no
finite homological-height bound on the coefficient objects.
\end{definition}

\begin{remark}\label{rem:bg-naturality}

A simplicial-group map $f:G\to H$ induces $B^{\mathrm{simp}}f$ and hence
$\mathbf Bf:\mathbf BG\to\mathbf BH$. Precomposition defines pullback
functors
\[
f^*:\mathcal R_k(H)\longrightarrow\mathcal R_k(G).
\]
For $n\ge1$, the same construction gives
\[
f^*:\mathbf{Rep}^{(n)}_k(H)
\longrightarrow\mathbf{Rep}^{(n)}_k(G).
\]
This construction is strictly contravariantly functorial:
$\operatorname{id}_G^*=\operatorname{id}$ and
$(g\circ f)^*=f^*\circ g^*$ for composable simplicial-group maps,
because $B^{\mathrm{simp}}$ and $N^{\mathrm{hc}}$ preserve composition
and these restriction functors are literal precomposition.
Proposition~\ref{prop:bg-homotopy} records the corresponding
naturality on homotopy groups.

\end{remark}

\begin{definition}\label{def:fiber-functor}
Evaluation at the unique vertex $\ast$ of $\mathbf BG$ sends an
untruncated representation $F\in\mathcal R_k(G)$ to its underlying
object $F(\ast)\in\mathcal V^{\infty}$. For $n\ge1$, its restriction
defines the $n$-representation fiber functor
\[
\omega_G^{(n)}:
\mathbf{Rep}^{(n)}_k(G)\longrightarrow\mathbf{Vect}^{(n)}_k,
\qquad
F\longmapsto F(\ast).
\]
Because $\mathbf Bf$ sends the unique vertex to the unique vertex, the
pullback functors of Remark~\ref{rem:bg-naturality} satisfy the canonical
equality
\[
\omega_G^{(n)}\circ f^*=\omega_H^{(n)}.
\]
This is an equality of functors in the chosen model, including on
higher simplices: evaluating $F\circ\mathbf Bf$ at $\ast$ is
evaluating $F$ at $\mathbf Bf(\ast)=\ast$. The same evaluation
compatibility holds for untruncated representations.
\end{definition}

For an ordinary group $\Gamma$, write
$\operatorname{Rep}^{\mathrm{ord}}_k(\Gamma)$ for the ordinary category
of all $k$-vector spaces with a $\Gamma$-action and equivariant linear
maps, equivalently all left modules over the ordinary group algebra
$k[\Gamma]$. The correspondence extends the action linearly from the
group basis of $k[\Gamma]$ and leaves equivariant maps unchanged.
No finite-dimensionality assumption is imposed.

\begin{proposition}[ordinary representations in height one]
\label{prop:one-representations}
For every simplicial group $G$, there is an equivalence compatible with the
fiber functors
\[
\mathbf{Rep}^{(1)}_k(G)
\simeq
N\bigl(\operatorname{Rep}^{\mathrm{ord}}_k(\pi_0G)\bigr).
\]
In particular, if $G$ is a homotopy $1$-group, its $1$-representations are
ordinary linear representations of its constant strict model $\pi_0G$.
\end{proposition}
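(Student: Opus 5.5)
Reduce the source to a $1$-groupoid, then compute functors from it into vector spaces directly.

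\textbf{Step 1: replace the target.} By Proposition~\ref{prop:n-linear-category-height} with $n=1$ there is an equivalence $e:\mathbf{Vect}^{(1)}_k\simeq N(\mathsf{Vect}_k)$. It is realized by the inclusion of constant degree-zero objects, whose full subcategory is literally $N(\mathsf{Vect}_k)$; the inverse takes $H_0$. Postcomposition with an equivalence induces an equivalence of functor $\infty$-categories, as in the proof of Proposition~\ref{prop:homotopy-n-universal}. Hence
\[
\mathbf{Rep}^{(1)}_k(G)\simeq \operatorname{Fun}(\mathbf BG,N(\mathsf{Vect}_k)).
\]
This is compatible with evaluation at $\ast$, because postcomposition commutes with evaluation.

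\textbf{Step 2: truncate the source.} The nerve $N(\mathsf{Vect}_k)$ of an ordinary category is a strict $(1,1)$-category. Proposition~\ref{prop:homotopy-n-universal} with $n=1$ therefore gives a strict isomorphism
\[
\operatorname{Fun}(\pi_{\le1}\mathbf BG,N(\mathsf{Vect}_k))\cong\operatorname{Fun}(\mathbf BG,N(\mathsf{Vect}_k)),
\]
again commuting with evaluation at the unique vertex.

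We then identify $\pi_{\le1}\mathbf BG$. It is a $1$-groupoid, namely the nerve of a groupoid, with one object, because $\mathbf BG$ has one vertex (Remark~\ref{rem:bg-kan-wbar}). Its automorphism group is $\pi_1(\mathbf BG)\cong\pi_0G$ by Definition~\ref{def:homotopy-n-truncation}(2) and Proposition~\ref{prop:bg-homotopy}(1). So there is an isomorphism $\pi_{\le1}\mathbf BG\cong N(B\pi_0G)$, where $B\pi_0G$ is the one-object groupoid. This uses the standard fact that a $1$-groupoid is isomorphic to the nerve of its homotopy category.

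\textbf{Step 3: compute functors between nerves.} The nerve is fully faithful, and $N$ preserves products with $\Delta^1$. Hence $\operatorname{Fun}(N(B\pi_0G),N(\mathsf{Vect}_k))\cong N(\operatorname{Fun}(B\pi_0G,\mathsf{Vect}_k))$. The category $\operatorname{Fun}(B\pi_0G,\mathsf{Vect}_k)$ is exactly $\operatorname{Rep}^{\mathrm{ord}}_k(\pi_0G)$, and under this identification evaluation at the object is the forgetful functor.

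Composing Steps 1--3 gives the equivalence compatible with the fiber functors. The final sentence of the statement follows from the weak equivalence $q:G\to\operatorname{const}\pi_0G$ of Remark~\ref{rem:homotopy-n-group-strict-models}, or directly, since the answer depends only on $\pi_0G$.

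\textbf{Main obstacle.} The delicate point is compatibility with the fiber functors in Step 1. The composite of the equivalence with the evaluations must agree with $H_0$, or with the inclusion, up to natural equivalence rather than on the nose. I would handle this by working with the inclusion $\iota:N(\mathsf{Vect}_k)\to\mathbf{Vect}^{(1)}_k$ and showing that $\iota_*$ is an equivalence. The relation $\omega\circ\iota_*=\iota\circ\mathrm{ev}$ then holds strictly, so compatibility is a strict commutative square whose horizontal maps are equivalences.
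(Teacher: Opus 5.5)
Your proposal is correct and follows essentially the paper's route: replace $\mathbf{Vect}^{(1)}_k$ by $N(\mathsf{Vect}_k)$ via Proposition~\ref{prop:n-linear-category-height}, then use that functors into the nerve of an ordinary category factor through the homotopy category, which here is the one-object groupoid with automorphism group $\pi_1(\mathbf BG)\cong\pi_0G$. Your detour through the strict universal property of $\pi_{\le1}\mathbf BG\cong N(h\mathbf BG)$, and your use of the strict inclusion $\iota$ to make the evaluation square commute on the nose, only spell out steps that the paper compresses into the identification $\operatorname{Fun}(A,N(D))\simeq N(\operatorname{Fun}(hA,D))$; note that the nerve computation in your Step~3 needs $N$ to preserve products with every $\Delta^m$, not just $\Delta^1$.
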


\begin{proof}
Choose the equivalence
$\mathbf{Vect}^{(1)}_k\simeq N(\mathsf{Vect}_k)$ of
Proposition~\ref{prop:n-linear-category-height}. For an
$\infty$-category $A$ and an ordinary category $D$, functors into
$N(D)$ and their natural transformations are determined by $hA$,
giving $\operatorname{Fun}(A,N(D))\simeq
N(\operatorname{Fun}(hA,D))$. Here $h(\mathbf BG)$ is the one-object
groupoid with automorphism group
$\pi_1(\mathbf BG)\cong\pi_0G$ as groups by
Proposition~\ref{prop:bg-homotopy}. Its functors to $\mathsf{Vect}_k$
and natural transformations are precisely ordinary representations
and intertwiners. Under the chosen coefficient equivalence, evaluation
at the unique vertex corresponds to the ordinary forgetful functor.
\end{proof}

\begin{table}[htbp]
\centering
\small
\renewcommand{\arraystretch}{1.25}
\begin{tabularx}{\textwidth}{|c|X|X|X|}
\hline
height & group model & linear target & representation model \\
\hline
$n=1$ & $G\simeq\operatorname{const}\pi_0G$
& a vector space in degree $0$
& an ordinary action $\pi_0G\to\operatorname{GL}(V)$ \\
\hline
$n=2$ & a Moore-length-one simplicial group, corresponding to a crossed
module after choosing the strict model
& a two-term complex $V_1\to V_0$, equivalently an internal category in
vector spaces
& a homotopy coherent action; a crossed-module morphism only after compatible
strict models have been chosen \\
\hline
general $n$ & an $(n-1)$-truncated simplicial group
& a complex representative concentrated in degrees $0,\ldots,n-1$
& a functor $\mathbf BG\to\mathbf{Vect}^{(n)}_k$ \\
\hline
\end{tabularx}
\caption{Low-dimensional models for $n$-representations. The group-model
column assumes that $G$ is a homotopy $n$-group. For arbitrary $G$, it describes
the $(n-1)$-truncated source model $Q_nG$ of
Proposition~\ref{lem:visible-source-reduction}, rather than $G$ itself.}
\label{tab:n-representation-low-models}
\end{table}

\begin{remark}[comparison with crossed-module models]\label{rem:n-rep-strict-model-boundary}
An object of $\mathbf{Rep}^{(2)}_k(G)$ is intrinsically a homotopy coherent
functor $\mathbf BG\to\mathbf{Vect}^{(2)}_k$.  Suppose, in addition, that a
strict point-set action $G'\to\operatorname{Aut}(V')$ representing this object
has been chosen, and that its source and target are compatible simplicial
groups of Moore length at most one.  Theorem~2.2 of
\cite{MutluPorter1998} then sends this strict morphism to a morphism of
crossed modules. The strict diagram models of
Section~\ref{sec:rep-module-categories} realize coherent representations
using projectively bifibrant diagrams; their coefficient condition is specified in
Proposition~\ref{prop:n-representation-function-model}.
\end{remark}

\subsection{The group algebra functor}\label{sec:group-algebra-functor}

\begin{definition}\label{def:group-algebra}
The simplicial group algebra $k[G]$ is the simplicial vector space
whose $n$-simplices form the free $k$-vector space $k[G_n]$ on the set
$G_n$. The face and degeneracy maps are the linear extensions
\[
k[d_i]:k[G_{n+1}]\to k[G_n],
\qquad
k[s_i]:k[G_n]\to k[G_{n+1}]
\]
of the corresponding maps $d_i:G_{n+1}\to G_n$ and
$s_i:G_n\to G_{n+1}$ of $G$. The multiplication is the bilinear
extension of the group multiplication $G_n\times G_n\to G_n$, and the
unit is the element of $k[G_n]$ corresponding to the identity of
$G_n$. Since faces and degeneracies are group homomorphisms, their
linear extensions preserve multiplication and units; thus these maps
make $k[G]$ a simplicial associative $k$-algebra.
A simplicial-group map $f:G\to H$ extends degreewise and linearly to a
simplicial algebra map $k[f]:k[G]\to k[H]$. This construction is
functorial: $k[\operatorname{id}_G]=\operatorname{id}_{k[G]}$ and
$k[g\circ f]=k[g]\circ k[f]$, as follows by checking on group-basis
elements.
\end{definition}

\subsection{Source truncation and homotopy invariance}
\label{sec:truncated-invariance}

The argument in this subsection is independent of the field and of
Dold--Kan.  It first isolates the source-truncation statement for an
arbitrary locally $(n-1)$-truncated $\infty$-category and then applies it to the natural target
$\mathbf{Vect}^{(n)}_k$.

\begin{proposition}[source-truncation identity]\label{prop:A3}
Let $n\ge0$, let $u:X\to Y$ be a map of Kan complexes, and let
$\mathcal D$ be a locally $(n-1)$-truncated $\infty$-category.  Write
$a_X:X\to\pi_{\le n}X$ and $a_Y:Y\to\pi_{\le n}Y$ for the natural
truncation maps. As functors from $\operatorname{Fun}(\pi_{\le n}Y,\mathcal D)$
to $\operatorname{Fun}(X,\mathcal D)$, there is an identity
\[
u^*\circ a_Y^*
=a_X^*\circ\bigl(\pi_{\le n}u\bigr)^*.
\]
Consequently, $u^*:\operatorname{Fun}(Y,\mathcal D)\to
\operatorname{Fun}(X,\mathcal D)$ is an equivalence if and only if
precomposition with $\pi_{\le n}u$ is an equivalence.
\end{proposition}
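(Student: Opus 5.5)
The identity is a strict equality of simplicial maps, so I will derive it from naturality of the truncation maps. By Definition~\ref{def:homotopy-n-truncation}, $\pi_{\le n}$ is a functor on Kan complexes (it is a construction on simplicial sets), and the tautological map $a_X:X\to\pi_{\le n}X$ is natural in $X$. Hence the square
\[
(\pi_{\le n}u)\circ a_X=a_Y\circ u
\]
commutes strictly as a map of simplicial sets $X\to\pi_{\le n}Y$. I will verify this directly from \cite[Construction~3.5.6.10]{Kerodon}, where $a_X$ sends a simplex to its class, and $\pi_{\le n}u$ is defined on classes by applying $u$. Since precomposition $\operatorname{Fun}(-,\mathcal D)$ is strictly contravariantly functorial, applying it to this square gives $u^*\circ a_Y^*=a_X^*\circ(\pi_{\le n}u)^*$ as functors $\operatorname{Fun}(\pi_{\le n}Y,\mathcal D)\to\operatorname{Fun}(X,\mathcal D)$.

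For the consequence, I will use Proposition~\ref{prop:homotopy-n-universal}. Because $X$ and $Y$ are Kan and $\mathcal D$ is locally $(n-1)$-truncated, it shows that $a_X^*$ and $a_Y^*$ are equivalences of $\infty$-categories. In the identity, $a_Y^*$ and $a_X^*$ are equivalences. Two-out-of-three for equivalences of $\infty$-categories, a property of the homotopy category of $\infty$-categories, then gives that $u^*$ is an equivalence if and only if $u^*\circ a_Y^*$ is. That composite equals $a_X^*\circ(\pi_{\le n}u)^*$, which is an equivalence if and only if $(\pi_{\le n}u)^*$ is.

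The main point to check is strict naturality of $a$ rather than naturality up to homotopy. If the cited construction only provided a homotopy-commutative square, the displayed equality would weaken to a natural equivalence. The "if and only if" would still follow, but the claimed identity would not. I expect that Kerodon's construction, defined as a quotient by a homotopy relation on simplices, is strictly functorial, so the equality holds on the nose. The edge case $n=0$, where $\pi_{\le 0}X$ is the discrete set $\pi_0X$, is covered by the same argument.
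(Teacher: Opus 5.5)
Your proposal is correct and matches the paper's proof: the strict naturality square $\pi_{\le n}u\circ a_X=a_Y\circ u$, strict functoriality of precomposition, the vertical equivalences $a_X^*,a_Y^*$ from Proposition~\ref{prop:homotopy-n-universal}, and two-out-of-three. Your worry about strictness resolves as you expect, because Kerodon's $\pi_{\le n}$ is a strictly functorial quotient construction whose tautological map is natural, so the identity holds on the nose.
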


\begin{proof}
Naturality of homotopy truncation gives
$\pi_{\le n}u\circ a_X=a_Y\circ u$.  Precomposition therefore gives a
commutative square
\[
\begin{CD}
\operatorname{Fun}(Y,\mathcal D) @>{u^*}>>
\operatorname{Fun}(X,\mathcal D)\\
@A{a_Y^*}A{\simeq}A @A{a_X^*}A{\simeq}A\\
\operatorname{Fun}(\pi_{\le n}Y,\mathcal D)
@>{(\pi_{\le n}u)^*}>>
\operatorname{Fun}(\pi_{\le n}X,\mathcal D).
\end{CD}
\]
The vertical functors are equivalences, including when $n=0$, by
Proposition~\ref{prop:homotopy-n-universal}.  The displayed identity and
two-out-of-three prove the final assertion.
If $\mathcal D$ is a strict $(n,1)$-model, the vertical
functors are isomorphisms and the identity may also be written
$u^*=a_X^*\circ(\pi_{\le n}u)^*\circ(a_Y^*)^{-1}$.
\end{proof}

\begin{theorem}[source-truncation invariance]
\label{thm:source-truncation-invariance}
Let $n\ge0$, let $u:X\to Y$ be a map of Kan complexes, and let
$\mathcal D$ be a locally $(n-1)$-truncated $\infty$-category.  If
\[
\pi_{\le n}u:\pi_{\le n}X\xrightarrow{\ \simeq\ }\pi_{\le n}Y
\]
is an equivalence, then precomposition induces an equivalence
\[
u^*:\operatorname{Fun}(Y,\mathcal D)
\xrightarrow{\ \simeq\ }
\operatorname{Fun}(X,\mathcal D).
\]
\end{theorem}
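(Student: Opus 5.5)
The plan is to reduce the theorem to Proposition~\ref{prop:A3} and then show that precomposition with $\pi_{\le n}u$ is an equivalence. Proposition~\ref{prop:A3} says that $u^*:\operatorname{Fun}(Y,\mathcal D)\to\operatorname{Fun}(X,\mathcal D)$ is an equivalence if and only if
\[
(\pi_{\le n}u)^*:\operatorname{Fun}(\pi_{\le n}Y,\mathcal D)\to\operatorname{Fun}(\pi_{\le n}X,\mathcal D)
\]
is an equivalence. So it suffices to prove the second statement under the hypothesis that $\pi_{\le n}u$ is an equivalence.

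For this I will use Definition~\ref{def:homotopy-n-truncation}(1): $\pi_{\le n}X$ and $\pi_{\le n}Y$ are Kan complexes. An equivalence between Kan complexes is a homotopy equivalence, either by Whitehead's theorem as cited in Definition~\ref{def:homotopy-n-truncation} or by reading ``equivalence'' in that sense directly. Lemma~\ref{lem:precomp-equivalence}, applied with $g=\pi_{\le n}u$, $A=\pi_{\le n}X$ and $B=\pi_{\le n}Y$, then shows that $(\pi_{\le n}u)^*$ is an equivalence of $\infty$-categories for every $\infty$-category $\mathcal D$. Local $(n-1)$-truncatedness of $\mathcal D$ is not used at this step; it enters only through Proposition~\ref{prop:A3}.

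Combining the two steps through the commutative square of Proposition~\ref{prop:A3} gives the conclusion. Explicitly, the vertical maps $a_X^*$ and $a_Y^*$ are equivalences by Proposition~\ref{prop:homotopy-n-universal}, so $u^*\circ a_Y^*=a_X^*\circ(\pi_{\le n}u)^*$ is an equivalence, and two-out-of-three for equivalences of $\infty$-categories shows that $u^*$ is one. The case $n=0$ needs no separate argument: there $\pi_{\le 0}$ is the discrete set $\pi_0$, $\mathcal D$ has empty or contractible mapping spaces, and Proposition~\ref{prop:homotopy-n-universal} covers this case as stated.

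The step needing the most care is converting ``equivalence'' of the truncations into the homotopy equivalence that Lemma~\ref{lem:precomp-equivalence} requires. I would handle it by noting that a weak homotopy equivalence between Kan complexes admits a simplicial homotopy inverse (standard; Kerodon's Whitehead theorem), after which everything else is formal.
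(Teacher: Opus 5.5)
Your proposal is correct and follows the paper's proof: Lemma~\ref{lem:precomp-equivalence} makes $(\pi_{\le n}u)^*$ an equivalence, the vertical functors $a_X^*,a_Y^*$ in the square of Proposition~\ref{prop:A3} are equivalences by Proposition~\ref{prop:homotopy-n-universal}, and two-out-of-three finishes the argument. Your remark that a weak equivalence of Kan complexes has a homotopy inverse only makes explicit what the paper compresses into ``equivalence of $\infty$-groupoids.''
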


\begin{proof}
Since $\pi_{\le n}u$ is an equivalence of $\infty$-groupoids,
Lemma~\ref{lem:precomp-equivalence} makes
$(\pi_{\le n}u)^*$ an equivalence.  The two vertical functors in the
natural square of Proposition~\ref{prop:A3} are also equivalences;
two-out-of-three gives the assertion.
\end{proof}

\begin{theorem}[$n$-representation homotopy invariance]\label{thm:A1}
Let $n\ge1$ and let $f:G\to H$ be a map of simplicial groups.  Suppose
that
\[
\pi_i(f):\pi_i(G)\longrightarrow\pi_i(H)
\]
is an isomorphism for $0\le i\le n-1$.  Then pullback induces an
equivalence
\[
f^*:
\mathbf{Rep}^{(n)}_k(H)
\xrightarrow{\ \simeq\ }
\mathbf{Rep}^{(n)}_k(G).
\]
\end{theorem}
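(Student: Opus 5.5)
The plan is to deduce the statement from Theorem~\ref{thm:source-truncation-invariance}, taking $X=\mathbf BG$, $Y=\mathbf BH$, $u=\mathbf Bf$ and $\mathcal D=\mathbf{Vect}^{(n)}_k$. This requires three ingredients.

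\emph{Locality of the target.} First, $\mathcal D$ is a locally $(n-1)$-truncated $\infty$-category. This is exactly Proposition~\ref{prop:n-linear-category-height}, whose mapping-space computation rests on Theorem~\ref{thm:mapping-homology-formula}.

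\emph{Kan sources.} Second, $\mathbf BG$ and $\mathbf BH$ are Kan complexes. This is recorded in Remark~\ref{rem:bg-kan-wbar}.

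\emph{Truncation of $\mathbf Bf$.} Third, and this is the substantive step, we check that $\pi_{\le n}(\mathbf Bf)$ is an equivalence. We use criterion~(5) of Definition~\ref{def:homotopy-n-truncation}.
\begin{itemize}
\item On $\pi_0$: both classifying spaces have a single vertex, so $\pi_0(\mathbf Bf)$ is a bijection (Proposition~\ref{prop:bg-homotopy}(2)). For the same reason it suffices to check homotopy groups at the unique vertex.
\item On $\pi_m$ for $1\le m\le n$: Proposition~\ref{prop:bg-homotopy}(3) identifies $\pi_m(\mathbf Bf)$ with $\pi_{m-1}(f)$. This is an isomorphism by hypothesis, since $0\le m-1\le n-1$.
\end{itemize}
In degree $m=1$ the identification is an isomorphism of possibly nonabelian groups. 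The proof of Proposition~\ref{prop:bg-homotopy} checks the group law directly, and this is exactly what the criterion needs. Hence $\pi_{\le n}(\mathbf Bf)$ is an equivalence of Kan complexes.

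\emph{Conclusion.} Theorem~\ref{thm:source-truncation-invariance} now gives that precomposition
\[
(\mathbf Bf)^*:\operatorname{Fun}(\mathbf BH,\mathbf{Vect}^{(n)}_k)\to\operatorname{Fun}(\mathbf BG,\mathbf{Vect}^{(n)}_k)
\]
is an equivalence. By Remark~\ref{rem:bg-naturality}, this functor is literally $f^*:\mathbf{Rep}^{(n)}_k(H)\to\mathbf{Rep}^{(n)}_k(G)$.

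\emph{Main obstacle.} The only delicate point is the basepoint and naturality bookkeeping in the third step. Criterion~(5) needs the isomorphisms $\pi_m(\mathbf BG)\cong\pi_{m-1}(G)$ to be natural in $f$, so that the induced map on $\pi_m(\mathbf Bf)$ really corresponds to $\pi_{m-1}(f)$, and not merely that the two groups are abstractly isomorphic. I would cite the naturality of the zigzag in Remark~\ref{rem:bg-kan-wbar} for this.

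We do not need $\mathbf Bf$ itself to be an $(n+1)$-equivalence. The hypothesis gives isomorphisms on $\pi_1,\dots,\pi_n$ of the classifying spaces, and that is precisely the input to criterion~(5). No condition on $\pi_{n+1}$ is required.
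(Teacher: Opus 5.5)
Your proposal is correct and follows the paper's proof essentially step for step. Both arguments check criterion~(5) of Definition~\ref{def:homotopy-n-truncation} for $\mathbf Bf$ via Proposition~\ref{prop:bg-homotopy}, including the group-law check in degree one, take local $(n-1)$-truncation of $\mathbf{Vect}^{(n)}_k$ from Proposition~\ref{prop:n-linear-category-height}, and then apply Theorem~\ref{thm:source-truncation-invariance}.
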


\begin{proof}
The classifying spaces are connected Kan complexes by
Remark~\ref{rem:bg-kan-wbar}.  Proposition~\ref{prop:bg-homotopy} gives
\[
\pi_j(\mathbf Bf)\cong\pi_{j-1}(f)
\qquad(1\le j\le n),
\]
For $j=1$, this is an isomorphism of groups, since the hypothesis at
$i=0$ is a group isomorphism and the classifying-space comparison
preserves multiplication. Both spaces have a single component, so
Definition~\ref{def:homotopy-n-truncation}(5) identifies
$\pi_{\le n}(\mathbf Bf)$ as an equivalence.  By
Proposition~\ref{prop:n-linear-category-height},
$\mathbf{Vect}^{(n)}_k$ is locally $(n-1)$-truncated.  Apply
Theorem~\ref{thm:source-truncation-invariance} with
$X=\mathbf BG$, $Y=\mathbf BH$, and
$\mathcal D=\mathbf{Vect}^{(n)}_k$.
\end{proof}

\begin{corollary}\label{cor:A2}
Let $n\ge1$ and let $f:G\to H$ be an $n$-equivalence in the convention
of Section~\ref{sec:classifying-space-basic}; that is,
\[
\pi_i(f)\text{ is an isomorphism for }0\le i\le n-1,
\qquad
\pi_n(f)\text{ is surjective}.
\]
Then $f^*:\mathbf{Rep}^{(n)}_k(H)\to
\mathbf{Rep}^{(n)}_k(G)$ is an equivalence.
\end{corollary}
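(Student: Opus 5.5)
The corollary is a special case of Theorem~\ref{thm:A1}, so the plan is simply to check its hypotheses and invoke it. By the convention of Section~\ref{sec:classifying-space-basic}, an $n$-equivalence $f:G\to H$ induces isomorphisms $\pi_i(f)$ for $0\le i\le n-1$; in degree zero this is an isomorphism of the component groups $\pi_0G\cong\pi_0H$. These are exactly the hypotheses of Theorem~\ref{thm:A1}. That theorem then gives the equivalence
\[
f^*:\mathbf{Rep}^{(n)}_k(H)\xrightarrow{\ \simeq\ }\mathbf{Rep}^{(n)}_k(G)
\]
directly. The surjectivity of $\pi_n(f)$ is not used.

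To make the role of the discarded condition transparent, I would add a short remark. Proposition~\ref{prop:bg-homotopy}(4) translates the hypothesis into $\mathbf Bf$ being an $(n+1)$-equivalence of connected Kan complexes. Only the isomorphisms on $\pi_j(\mathbf Bf)$ for $1\le j\le n$ enter Definition~\ref{def:homotopy-n-truncation}(5). This is because $\mathbf{Vect}^{(n)}_k$ is locally $(n-1)$-truncated (Proposition~\ref{prop:n-linear-category-height}), so only $\pi_{\le n}\mathbf BG$ matters (Theorem~\ref{thm:source-truncation-invariance}). The surjectivity on $\pi_{n+1}(\mathbf Bf)\cong\pi_n(f)$ concerns homotopy invisible to the target.

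There is no real obstacle. The only point needing care is that the degree-zero condition must be read as a group isomorphism, which the convention already stipulates; no further argument is needed.
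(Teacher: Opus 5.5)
Your proposal is correct and matches the paper's proof. Both observe that an $n$-equivalence satisfies the isomorphism hypotheses of Theorem~\ref{thm:A1} in degrees $0,\ldots,n-1$, and that the $\pi_n$-surjectivity is never used; your side remark translating this via Proposition~\ref{prop:bg-homotopy}(4) into $\mathbf Bf$ being an $(n+1)$-equivalence is accurate but not needed.
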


\begin{proof}
An $n$-equivalence has the full isomorphism range required by
Theorem~\ref{thm:A1}, which gives the result. Its additional
$\pi_n$-surjectivity condition plays no role in the argument.
\end{proof}

\begin{proposition}[reduction to the truncated source]
\label{lem:visible-source-reduction}
For $n\ge1$, put
\[
Q_nG:=\operatorname{cosk}^{\mathsf{sGrp}}_{n}(G|_{\le n}),
\]
a functorial simplicial-group model for $\tau_{\le n-1}G$
\cite[Example~3.5.7.24, Tag~\href{https://kerodon.net/tag/055B}{055B}]{Kerodon}.  For every
locally $(n-1)$-truncated $\infty$-category $\mathcal D$,
precomposition with the canonical map
$\mathbf BG\to\mathbf BQ_nG$ induces an equivalence
\[
\operatorname{Fun}(\mathbf BQ_nG,\mathcal D)
\xrightarrow{\ \simeq\ }
\operatorname{Fun}(\mathbf BG,\mathcal D).
\]
In particular,
\[
\mathbf{Rep}^{(n)}_k(Q_nG)
\xrightarrow{\ \simeq\ }
\mathbf{Rep}^{(n)}_k(G)
\]
compatibly with the fiber functors.  If $G$ is a homotopy $n$-group,
then $G\to Q_nG$ is a weak equivalence.
\end{proposition}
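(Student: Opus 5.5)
The plan is to reduce everything to Theorem~\ref{thm:source-truncation-invariance} applied to $u=\mathbf B(c):\mathbf BG\to\mathbf BQ_nG$, where $c:G\to Q_nG$ is the canonical map. The only homotopical input needed about $c$ is that $\pi_i(c)$ is an isomorphism for $0\le i\le n-1$ and that $\pi_i(Q_nG)=0$ for $i\ge n$. I will take both from the cited Kerodon example: the coskeleton $\operatorname{cosk}_n(G|_{\le n})$ computed in simplicial sets is an $(n-1)$-truncation of the Kan complex $G$, with the unit map inducing isomorphisms on $\pi_i$ for $i\le n-1$. The remaining preliminary point is that the coskeleton formed in $\mathsf{sGrp}$ has underlying simplicial set equal to the coskeleton formed in $\mathsf{sSet}$. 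This holds because $\operatorname{cosk}_n$ is a right Kan extension, computed by limits, and the forgetful functor $\mathsf{Grp}\to\mathsf{Set}$ creates limits. Consequently $Q_nG$ is a simplicial group, $c$ is a homomorphism, and both constructions are functorial in $G$.

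With this in hand, the proof applies Proposition~\ref{prop:bg-homotopy}(3), which gives $\pi_j(\mathbf Bc)\cong\pi_{j-1}(c)$ for $j\ge1$. The map $\mathbf Bc$ is therefore an isomorphism on $\pi_j$ for $1\le j\le n$, and it is trivially bijective on $\pi_0$ since both spaces have one vertex. For $j=1$ this is a group isomorphism, exactly as in the proof of Theorem~\ref{thm:A1}. Definition~\ref{def:homotopy-n-truncation}(5) then makes $\pi_{\le n}(\mathbf Bc)$ an equivalence, and Theorem~\ref{thm:source-truncation-invariance} gives the first displayed equivalence for every locally $(n-1)$-truncated $\mathcal D$. (Alternatively, one can invoke Theorem~\ref{thm:A1} directly for the target $\mathbf{Vect}^{(n)}_k$.)

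The $\mathbf{Rep}^{(n)}_k$ statement follows by taking $\mathcal D=\mathbf{Vect}^{(n)}_k$, which is locally $(n-1)$-truncated by Proposition~\ref{prop:n-linear-category-height}. Compatibility with the fiber functors is the strict equality $\omega^{(n)}_G\circ c^*=\omega^{(n)}_{Q_nG}$ from Definition~\ref{def:fiber-functor}.

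For the final claim, suppose $G$ is a homotopy $n$-group, so $\pi_i(G)=0$ for $i\ge n$. Then $c$ induces isomorphisms on all $\pi_i$: for $i\le n-1$ by the truncation property, and for $i\ge n$ because both sides vanish. Since both $G$ and $Q_nG$ are Kan and every basepoint can be translated to $e$ by group multiplication, $c$ is a weak equivalence.

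The main obstacle is the precise degree bookkeeping in the cited coskeleton statement, namely checking that $\operatorname{cosk}_n$ of the $n$-skeleton of a Kan complex is the $(n-1)$-truncation with the stated isomorphisms up to $\pi_{n-1}$, rather than an off-by-one variant. The risk is that $\pi_n$ might fail to vanish, or that the map on $\pi_{n-1}$ might only be surjective. To settle this, I would check directly that an $i$-simplex of $\operatorname{cosk}_n$ for $i>n$ is determined by its $n$-skeleton, which kills $\pi_i$ for $i\ge n$. I would also check that for $i\le n-1$, spheres and their null-homotopies live in dimensions at most $n$, where $Q_nG$ agrees with $G$. That gives injectivity on $\pi_{n-1}$, since null-homotopies of $(n-1)$-spheres use $n$-simplices.
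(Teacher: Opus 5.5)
Your proposal is correct and is essentially the paper's proof: identify the underlying simplicial set of $Q_nG$ with $\operatorname{cosk}^{\mathsf{sSet}}_n$ of that of $G$, and apply the Kerodon coskeleton-truncation example with parameter $n-1$. Then transfer to $\mathbf BG\to\mathbf BQ_nG$ via Proposition~\ref{prop:bg-homotopy} and conclude with Theorem~\ref{thm:source-truncation-invariance} and Proposition~\ref{prop:n-linear-category-height}. The only difference is in how you identify the two coskeleta. You use the pointwise limit formula for the right Kan extension together with limit-creation by $\mathsf{Grp}\to\mathsf{Set}$, whereas the paper uses the degreewise free-group adjunction; both are valid, and yours is slightly more direct.
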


\begin{proof}
Let $U:\mathsf{sGrp}\to\mathsf{sSet}$ be the degreewise forgetful functor
and $F$ its degreewise free-group left adjoint. These adjunctions also
hold for $n$-truncated objects, and restriction commutes with $F$.
Successive adjunctions therefore give, naturally in a simplicial set $X$,
\[
\operatorname{Hom}_{\mathsf{sSet}}(X,UQ_nG)
\cong\operatorname{Hom}_{\mathsf{sGrp}_{\le n}}
  ((FX)|_{\le n},G|_{\le n})
\cong\operatorname{Hom}_{\mathsf{sSet}}
  (X,\operatorname{cosk}^{\mathsf{sSet}}_n UG).
\]
Thus $UQ_nG\cong\operatorname{cosk}^{\mathsf{sSet}}_n UG$, carrying the
canonical map to the canonical coskeleton map. Since $UG$ is Kan,
\cite[Example~3.5.7.24, Tag~\href{https://kerodon.net/tag/055B}{055B}]{Kerodon}, applied with its parameter $n-1$,
shows that $G\to Q_nG$ is an isomorphism on
$\pi_0,\ldots,\pi_{n-1}$ and the target has no higher homotopy groups.
By Proposition~\ref{prop:bg-homotopy}, the induced map
$\mathbf BG\to\mathbf BQ_nG$ is an isomorphism on homotopy groups
through degree $n$, and its target is an $n$-type.  It therefore exhibits
$\mathbf BQ_nG$ as $\pi_{\le n}(\mathbf BG)$.  The first assertion is
Theorem~\ref{thm:source-truncation-invariance}; the representation
statement uses Proposition~\ref{prop:n-linear-category-height}, and
evaluation compatibility is pointwise.  If $G$ is a homotopy $n$-group,
the truncation map is an isomorphism on every possibly nonzero homotopy
group, hence a weak equivalence of Kan complexes.
\end{proof}

\begin{remark}\label{rem:A3}
Proposition~\ref{prop:A3} is an identity decomposition, not a converse
to Theorem~\ref{thm:source-truncation-invariance}.  An equivalence of
functor categories need not force $\pi_{\le n}u$ to be an equivalence:
take the inclusion $u:\Delta^0\hookrightarrow\Delta^0\sqcup\Delta^0$
of one component and the terminal target $\mathcal D=\Delta^0$.
Precomposition is an isomorphism of terminal functor categories, whereas
$\pi_{\le n}u=u$ fails already on components for every $n\ge0$.
Section~\ref{sec:detection} proves degree-zero conservativity for
$n$-representations and gives precise coefficient-dependent failures in
positive degree.
\end{remark}

\section{Representation $\infty$-categories and module categories}\label{sec:rep-module-categories}

This section compares strict representation categories, representation
$\infty$-categories, and module categories. These comparisons give concrete models for coherent actions and their
mapping spaces, including the finite-height subcategories studied here.

\subsection{Enriched representations as strict functor categories}\label{sec:enriched-strict-functors}

A strict action of $G$ on a simplicial vector space defines a
representation. Equivariant maps between these representations form
the hom-objects of a $\mathcal V$-enriched category.

\paragraph{\textbf{Strict actions.}}
For $V\in\mathcal V$, composition makes $\operatorname{Map}(V,V)$
a simplicial monoid. Let $\operatorname{Aut}(V)_m$ be its group of
units in degree $m$. These are invertible parameterized maps
$V\otimes k[\Delta^m]\to V$, with composition using the diagonal
of $\Delta^m$.
Simplicial operators preserve composition and units, so the groups
$\operatorname{Aut}(V)_m$ form a simplicial group.
A strict representation is a pair $(V,\rho)$ with
\[
\rho:G\longrightarrow\operatorname{Aut}(V)
\]
a simplicial group homomorphism. These pairs are the objects of
$\mathbf{Rep}_k^{\mathrm{enr}}(G)$.

\paragraph{\textbf{Equivariant maps.}}
For two representations $(V,\rho)$ and $(W,\sigma)$, the
tensor--mapping-space adjunction gives the linearly extended actions
\[
\alpha:k[G]\otimes_{\mathcal V}V\longrightarrow V,
\qquad
\beta:k[G]\otimes_{\mathcal V}W\longrightarrow W.
\]
A simplicial linear map $h:V\to W$ is equivariant when
\[
h\circ\alpha=\beta\circ(1\otimes h).
\]
The equivariant internal Hom imposes this condition on parameterized
maps through the equalizer in $\mathcal V$
\[
\underline{\operatorname{Hom}}_{\mathbf{Rep}}((V,\rho),(W,\sigma))
\longrightarrow
\underline{\operatorname{Hom}}_{\mathcal V}(V,W)
\rightrightarrows
\underline{\operatorname{Hom}}_{\mathcal V}
  (k[G]\otimes_{\mathcal V}V,W).
\]
The two arrows encode, respectively, applying the source action
before the map and applying the target action after it.
Explicitly, they are adjoint to
\[
\underline{\operatorname{Hom}}_{\mathcal V}(V,W)\otimes k[G]\otimes V
\xrightarrow{\,1\otimes\alpha\,}
\underline{\operatorname{Hom}}_{\mathcal V}(V,W)\otimes V
\xrightarrow{\,\operatorname{ev}\,}W
\]
and
\[
\begin{aligned}
\underline{\operatorname{Hom}}_{\mathcal V}(V,W)\otimes k[G]\otimes V
&\xrightarrow{\,\mathrm{swap}\otimes1\,}
k[G]\otimes\underline{\operatorname{Hom}}_{\mathcal V}(V,W)\otimes V\\
&\xrightarrow{\,1\otimes\operatorname{ev}\,}
k[G]\otimes W
\xrightarrow{\,\beta\,}W.
\end{aligned}
\]
Here tensor products are identified using the associativity constraints.
Completeness of $\mathcal V$ supplies the equalizer. Equivariant maps
are closed under composition and include the identities, so the
composition and units of $\mathcal V$ give
$\mathbf{Rep}_k^{\mathrm{enr}}(G)$ its $\mathcal V$-enrichment.

\paragraph{\textbf{The functor description.}}
A simplicial functor from the single-object category
$B^{\mathrm{simp}}G$ chooses $V$ and an action preserving multiplication
and the unit. Its natural transformations satisfy the equivariance
condition above. Write
$\operatorname{Fun}_{\Delta}(B^{\mathrm{simp}}G,\mathcal V)$ for this
simplicial functor category, equipped with the $\mathcal V$-valued
equivariant hom-objects above.

\begin{theorem}\label{thm:rep-strict-functor}
There is an isomorphism of $\mathcal V$-enriched categories
\[
\mathbf{Rep}_k^{\mathrm{enr}}(G)
\;\cong\;
\operatorname{Fun}_{\Delta}\bigl(B^{\mathrm{simp}}G,\mathcal{V}\bigr).
\]
\end{theorem}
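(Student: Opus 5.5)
I will compare the two sides directly, object by object and then hom-object by hom-object, and check that the identifications respect composition and units. Throughout I read $B^{\mathrm{simp}}G$ as a simplicial category, that is, an $\mathsf{sSet}$-enriched category, and $\mathcal V$ through its simplicial enrichment $\operatorname{Map}=\operatorname{Und}\underline{\operatorname{Hom}}_{\mathcal V}$ of Definition~\ref{def:underlying-infinity}. This is the standing convention of the functor description above.

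\emph{Objects.} A simplicial functor $F:B^{\mathrm{simp}}G\to\mathcal V$ consists of an object $V=F(\ast)$ and a map of simplicial sets
\[
G\longrightarrow\operatorname{Map}(V,V)
\]
preserving the degreewise composition and the identities $(u_V)_m(1)$. A monoid map out of a group lands in the units. The units in degree $m$ form $\operatorname{Aut}(V)_m$, so such a map is exactly a simplicial group homomorphism $\rho:G\to\operatorname{Aut}(V)$. The correspondence is literally an equality of data, hence bijective, and it commutes with restriction along homomorphisms.

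\emph{Hom-objects.} For functors $F=(V,\rho)$ and $F'=(W,\sigma)$, the enriched natural-transformation object of Definition~\ref{def:enriched-end}, taken $\mathcal V$-valued, is the end over the single object $\ast$. It is therefore the equalizer of two maps
\[
\underline{\operatorname{Hom}}_{\mathcal V}(V,W)\rightrightarrows
\underline{\operatorname{Hom}}_{\mathcal V}\bigl(\mathcal H,\underline{\operatorname{Hom}}_{\mathcal V}(V,W)\bigr),
\]
where $\mathcal H$ is the hom-object of the $\mathcal V$-category freely generated by $B^{\mathrm{simp}}G$. Under the free functor $k[-]:\mathsf{sSet}\to\mathcal V$, which is strong monoidal from the cartesian to the levelwise tensor since $k[K\times L]\cong k[K]\otimes_{\mathcal V}k[L]$, this hom-object is $\mathcal H=k[G]$. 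One map is induced by precomposition with $\rho$ and the other by postcomposition with $\sigma$.

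Using the tensor--Hom adjunction together with the symmetry and associativity of $\mathcal V$, I will rewrite the target as
\[
\underline{\operatorname{Hom}}_{\mathcal V}(k[G]\otimes_{\mathcal V}V,W).
\]
The two maps then become precisely the displayed adjoints built from $1\otimes\alpha$ followed by $\operatorname{ev}$, and from $\mathrm{swap}$, $1\otimes\operatorname{ev}$ and $\beta$. Here $\alpha$ and $\beta$ are the adjoints of $\rho$ and $\sigma$ extended linearly over $k[G]$, using the freeness of $k[G_m]$ on $G_m$. It follows that the two equalizers coincide as subobjects of $\underline{\operatorname{Hom}}_{\mathcal V}(V,W)$.

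\emph{Main obstacle.} This last identification is where the work lies. Its core is a degreewise check on elements. An $r$-simplex $a:V\otimes k[\Delta^r]\to W$ is equivariant on one side exactly when, for every $g\in G_s$, every $\theta:[s]\to[r]$ and every $v\in V_s$,
\[
a_s\bigl(\rho(g)_s(v\otimes\mathrm{id})\otimes\theta\bigr)
=\sigma(g)_s\bigl(a_s(v\otimes\theta)\otimes\mathrm{id}\bigr),
\]
with $g$ pulled back along $\theta$ as appropriate. I will verify that the other side reduces to the same equation by unwinding the explicit composition formula $m_{V,W,Z}$ of Definition~\ref{def:underlying-infinity}. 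In that formula both factors share the same parameter $\theta$, so the same diagonal appears on both sides.

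\emph{Composition and units.} Both categories take their composition and units from those of $\mathcal V$, restricted to the equalizer subobjects. Since the subobjects coincide, the identity-on-hom-objects assignment is a $\mathcal V$-functor. It is bijective on objects and an isomorphism on every hom-object, hence an isomorphism of $\mathcal V$-enriched categories.
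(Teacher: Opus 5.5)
Your proof is correct. The object step is exactly the paper's: a simplicial monoid map out of $G$ lands in the units, so it is a simplicial group map into $\operatorname{Aut}(V)$. On hom-objects, however, you take a genuinely different route.

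\emph{The paper's route.} It equips $\operatorname{Fun}_{\Delta}(B^{\mathrm{simp}}G,\mathcal V)$ with the $\mathcal V$-valued equivariant equalizers \emph{by definition}. Its hom-object identification is therefore the identity, and composition and units agree because both are restricted from $\mathcal V$. Its only further work is to note that $\operatorname{Und}$ creates limits, so the underlying simplicial set of that equalizer is the standard $\mathsf{sSet}$-valued end $\int_{x}\operatorname{Map}(F(x),F'(x))$.

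\emph{Your route.} You give the $\mathcal V$-enrichment an intrinsic meaning, as Kelly's $\mathcal V$-valued end over the free $\mathcal V$-category $k[B^{\mathrm{simp}}G]$. Its hom-object is $k[G]$ because $k[-]$ is strong monoidal. You then prove, via $\underline{\operatorname{Hom}}_{\mathcal V}(k[G],\underline{\operatorname{Hom}}_{\mathcal V}(V,W))\cong\underline{\operatorname{Hom}}_{\mathcal V}(k[G]\otimes_{\mathcal V}V,W)$, that the end's two structure maps are precisely the paper's two displayed adjoints. This justifies what the paper builds into its definition, at the cost of the change-of-base and adjunction bookkeeping.

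Two smaller remarks. Your displayed elementwise condition is already exactly right with $g\in G_s$; no pullback of $g$ along $\theta$ occurs in this formulation, and the check is redundant once the adjunction argument is in place. Also, you do not state the $\mathsf{sSet}$-end compatibility that later sections rely on (Lemma~\ref{lem:bifibrant-inclusion}, Theorem~\ref{thm:function-model}). It follows at once from your setup: $\operatorname{Und}$ preserves limits, and $\operatorname{Und}\underline{\operatorname{Hom}}_{\mathcal V}(k[G],X)\cong\operatorname{Map}_{\mathsf{sSet}}(G,\operatorname{Und}X)$ because $k[G]\otimes_{\mathcal V}k[\Delta^n]\cong k[G\times\Delta^n]$.
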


\begin{proof}
A simplicial functor
$F:B^{\mathrm{simp}}G\to\mathcal V$ is determined by the
object $V:=F(\ast)$ and by a simplicial monoid homomorphism
\[
G
\longrightarrow
\operatorname{Map}_{\mathcal{V}}(V,V)
=
\operatorname{Und}\bigl(\underline{\operatorname{Hom}}_{\mathcal{V}}(V,V)\bigr)
\]
that is, a map preserving composition and the unit. Since every
$g\in G_n$ has an inverse $g^{-1}$ and $F$ preserves composition,
\[
F(g)\circ F(g^{-1})
=
F(gg^{-1})
=
F(e)
=
\mathrm{id}_V,
\]
and similarly $F(g^{-1})\circ F(g)=\mathrm{id}_V$ in
$\operatorname{Map}_{\mathcal V}(V,V)_n$. Hence the simplicial monoid map
factors uniquely through $\operatorname{Aut}(V)$. Thus
$F$ determines a strict representation $(V,\rho)$ with
$\rho(g):=F(g)$.

Conversely, given a strict representation $(V,\rho)$, define
$F(\ast):=V$ and $F(g):=\rho(g)$ for $g\in G$. Because $\rho$ is a
simplicial group homomorphism, this assignment preserves composition
and identities on $B^{\mathrm{simp}}G(\ast,\ast)=G$, and hence defines
a simplicial functor. The two
constructions are mutually inverse.

For corresponding objects, both enriched categories now have, by
definition, the same equalizer as their hom-object. The object
identification therefore induces the identity on every hom-object.
These identities respect composition and units because both enrichments
restrict those of $\mathcal V$. Hence the mutually inverse object
constructions form an isomorphism of $\mathcal V$-enriched categories.

The compatibility with the usual simplicial enrichment follows from
the fact that
$\operatorname{Und}:\mathcal V\to\mathsf{sSet}$ creates limits:
limits are computed levelwise, and
$\mathsf{Vect}_k\to\mathsf{Set}$ creates limits. Thus the underlying
simplicial set of the equivariant-Hom equalizer is the
enriched-natural-transformation end
\[
\int_{x\in B^{\mathrm{simp}}G}
\operatorname{Map}(F(x),F'(x)).
\]
The underlying simplicial category is therefore the standard
simplicial functor category.
\end{proof}

\subsection{The function model theorem}\label{sec:function-model}

For a simplicial category $\mathbf{A}$ and a small simplicial category
$\mathcal{C}$, write
$\mathbf{A}^{\mathcal{C}}:=\operatorname{Fun}_\Delta(\mathcal{C},\mathbf{A})$
for the simplicial category of simplicial functors
$\mathcal{C}\to\mathbf{A}$. Its objects are simplicial functors, and
its mapping spaces are the ends of
Definition~\ref{def:enriched-end} applied with
$\mathbf V=\mathsf{sSet}$:
\[
\operatorname{Map}_{\mathbf{A}^{\mathcal{C}}}(F,G)
=
\int_{x\in\mathcal{C}}\operatorname{Map}_{\mathbf{A}}(F(x),G(x)).
\]
Here the end is taken in $\mathsf{sSet}$; it exists because
$\mathcal C$ is small and $\mathsf{sSet}$ is complete.

In the special case $\mathbf A=\mathcal V$ and
$\mathcal C=B^{\mathrm{simp}}G$, Section~\ref{sec:enriched-strict-functors}
identifies this mapping space with the underlying simplicial set of a
$\mathcal V$-valued equivariant-Hom equalizer.
In particular, the underlying ordinary category has the simplicial
natural transformations as morphisms, i.e. the $0$-simplices of these
mapping spaces.

If $\mathbf A$ is a combinatorial simplicial model category, the
diagram category $\mathbf A^{\mathcal C}$ admits a combinatorial
simplicial projective model structure.

We apply
\cite[Proposition A.3.3.2]{HTT} with enrichment base
$\mathcal S=\mathsf{sSet}$, equipped with the Kan model structure
and Cartesian product. This base is an excellent model category by
\cite[Example A.3.2.18]{HTT}. In the terminology of
\cite[Definition A.3.1.5]{HTT}, an $\mathsf{sSet}$-enriched model
category is a simplicial model category. Thus the remaining inputs
are the combinatorial simplicial model category $\mathbf A$ and
the small simplicial category $\mathcal C$.

\paragraph{\textbf{The projective structure.}}
For a natural transformation $u:F\to F'$ in $\mathbf A^{\mathcal C}$,
the weak equivalences and projective fibrations are determined
objectwise:
\[
\begin{aligned}
u\text{ is a weak equivalence}
&\ \Longleftrightarrow\
u_x:F(x)\to F'(x)\text{ is a weak equivalence for every }x,\\
u\text{ is a projective fibration}
&\ \Longleftrightarrow\
u_x:F(x)\to F'(x)\text{ is a fibration for every }x.
\end{aligned}
\]
A projective cofibration has the left lifting property against
all maps that are both projective fibrations and weak equivalences
\cite[Definition A.3.3.1]{HTT}. These lifts are natural transformations,
so the lifting condition includes compatibility with the diagram
structure.
The simplicial tensor is computed objectwise,
$(F\otimes K)(x)=F(x)\otimes K$, and
\cite[Remark A.3.3.4]{HTT} supplies its compatibility with the
projective model structure.

\paragraph{\textbf{Application to strict representations.}}
Here $\mathbf A=\mathcal V$ is combinatorial and simplicial by
Proposition~\ref{prop:model-structure-svect}, with mapping spaces
$\operatorname{Und}\underline{\operatorname{Hom}}_{\mathcal V}(V,W)$
and tensor $V\otimes K=V\otimes_{\mathcal V}k[K]$.
The indexing category $\mathcal C=B^{\mathrm{simp}}G$ has one object
and the small simplicial mapping space $G$.
Hence the theorem applies to the strict representation category
identified in Section~\ref{sec:enriched-strict-functors}.
For a map of strict representations, weak equivalence means that
the underlying normalized map is a quasi-isomorphism, and fibration
means that it is surjective in every positive degree.
Projective cofibrancy concerns lifts that respect the $G$-action.
It provides the appropriate source models for derived mapping
spaces and for the comparison with coherent representations below.

\begin{lemma}\label{lem:bifibrant-inclusion}
Give $\mathbf A^{\mathcal C}$ its projective simplicial
model structure, and write $(\mathbf A^{\mathcal C})^\circ$ for its full
simplicial subcategory of fibrant-cofibrant objects. If
$\mathbf A=\mathcal V$ and $\mathcal C=B^{\mathrm{simp}}G$, then every
projective diagram is fibrant, and
$(\mathbf A^{\mathcal C})^\circ$ is locally Kan.
\end{lemma}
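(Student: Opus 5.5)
The lemma has two parts, and both follow from the projective model structure and the simplicial model axiom, so I will prove them in that order.

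\textbf{Every projective diagram is fibrant.} Take an object $F$ of $\mathcal V^{B^{\mathrm{simp}}G}$. Its map to the terminal diagram $0$ (the constant zero functor) is, by the definition of the projective structure, a projective fibration exactly when each component $F(x)\to0$ is a fibration in $\mathcal V$. The indexing category has only the object $\ast$, so the single component is $F(\ast)\to0$. This is a fibration because Proposition~\ref{prop:model-structure-svect} makes every object of $\mathcal V$ fibrant. Explicitly, $N(0)_n=0$, so surjectivity in positive degrees holds trivially. Hence every diagram is projectively fibrant, and $(\mathbf A^{\mathcal C})^\circ$ is just the full subcategory of projectively cofibrant diagrams.

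\textbf{Local Kan property.} Take $F,F'$ in $(\mathbf A^{\mathcal C})^\circ$. The projective structure is a simplicial model category by \cite[Proposition A.3.3.2]{HTT}, applied with the inputs recorded before the lemma. In a simplicial model category, $\operatorname{Map}(F,F')$ is a Kan complex whenever $F$ is cofibrant and $F'$ is fibrant. This is the standard consequence of axiom SM7 for the cofibration $\emptyset\to F$ and the fibration $F'\to\ast$.

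I also want a direct argument that avoids SM7, and it does not even need cofibrancy. Theorem~\ref{thm:rep-strict-functor} identifies this mapping space with $\operatorname{Und}$ of the equivariant-Hom equalizer. That equalizer is a simplicial sub-vector space of $\underline{\operatorname{Hom}}_{\mathcal V}(F(\ast),F'(\ast))$ that is levelwise linear and closed under faces and degeneracies, because the two parallel arrows are maps in $\mathcal V$. The second assertion of Proposition~\ref{prop:simplicial-vector-kan} then shows that its underlying simplicial set is Kan.

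\textbf{Expected obstacle.} The only delicate point is matching the simplicial enrichment of the HTT projective structure with the end formula and the equivariant equalizer. The last paragraph of the proof of Theorem~\ref{thm:rep-strict-functor} settles this, since $\operatorname{Und}$ creates limits. With that identification in place, both arguments apply and the lemma follows.
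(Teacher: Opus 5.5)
Your proposal is correct, and your direct argument is essentially the paper's proof. You get fibrancy because projective fibrations are objectwise fibrations and every object of $\mathcal V$ is fibrant. You get the local Kan property by identifying the mapping space with $\operatorname{Und}$ of the equivariant-Hom equalizer, which is a simplicial vector space, and then applying Proposition~\ref{prop:simplicial-vector-kan}.

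The SM7 argument you give first is a valid standard alternative, with the simplicial compatibility supplied by \cite[Remark A.3.3.4]{HTT}, but the paper does not use it.
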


\begin{proof}

Projective fibrations are objectwise fibrations. Every
object of $\mathcal V$ is fibrant by
Proposition~\ref{prop:model-structure-svect}, so every object of the projective
diagram model category is fibrant. Thus the objects of
$(\mathbf A^{\mathcal C})^\circ$ are precisely the projectively
cofibrant diagrams. For such diagrams
$F,F'$, the mapping simplicial set in the full subcategory is the same as
in $\mathbf A^{\mathcal C}$. By the $\mathcal V$-enrichment constructed
in Section~\ref{sec:enriched-strict-functors}, it is the underlying
simplicial set of the $\mathcal V$-valued equivariant-Hom equalizer. That
equalizer is a simplicial $k$-vector space, so its underlying simplicial
set is Kan by Proposition~\ref{prop:simplicial-vector-kan}. Hence the full
subcategory is locally Kan.

\end{proof}

\noindent\textbf{Warning.} The inclusion
$(\mathbf A^{\mathcal C})^\circ\hookrightarrow\mathbf A^{\mathcal C}$
need not be a Dwyer--Kan equivalence for the given simplicial enrichment. A model
weak-equivalence zigzag becomes invertible only after localization; it
need not be a simplicial homotopy equivalence.
For example, take $k=\mathbb F_p$ and constant $G=C_p$. The constant
trivial $k[G]$-module $M=k$ is not projective: the local ring
$k[C_p]\cong k[t]/(t-1)^p$ has only free finitely generated projectives,
whose $k$-dimensions are multiples of $p$.
A projectively cofibrant simplicial $k[G]$-module $Q$ has projective $Q_0$.
If $M$ were simplicially homotopy equivalent to such a $Q$, the
composite $M\to Q\to M$ would equal the identity in degree zero,
because $M$ is constant. This would make $M$ a retract of $Q_0$,
a contradiction. Thus the inclusion is not essentially surjective
on the homotopy category of that enrichment before localization, although
it is fully faithful on its mapping spaces.

\begin{theorem}\label{thm:function-model}
Set $\mathbf{A}:=\mathcal{V}$ and
$\mathcal{C}:=B^{\mathrm{simp}}G$, and give
$\mathbf A^{\mathcal C}$ the projective model structure. There is an
equivalence of
$\infty$-categories
\[
\Theta:
N^{\mathrm{hc}}\bigl((\mathbf{A}^{\mathcal{C}})^\circ\bigr)
\;\xrightarrow{\ \simeq\ }\;
\mathcal{R}_k(G)
:=
\operatorname{Fun}\bigl(\mathbf{B}G,\mathcal{V}^{\infty}\bigr).
\]
\end{theorem}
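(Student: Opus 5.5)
The plan is to derive the statement from the strictification theorem \cite[Proposition~4.2.4.4]{HTT}. For a combinatorial simplicial model category $\mathbf A$ and a small simplicial category $\mathcal C$, that result gives an equivalence
\[
N^{\mathrm{hc}}\bigl((\mathbf A^{\mathcal C})^\circ\bigr)\xrightarrow{\ \simeq\ }
\operatorname{Fun}\bigl(N^{\mathrm{hc}}\mathcal C,\,N^{\mathrm{hc}}(\mathbf A^\circ)\bigr),
\]
where the projective structure on $\mathbf A^{\mathcal C}$ is used. I will check the hypotheses first. By Proposition~\ref{prop:model-structure-svect}, $\mathbf A=\mathcal V$ is combinatorial and simplicial. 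The indexing category $\mathcal C=B^{\mathrm{simp}}G$ is small. The projective model structure exists by \cite[Proposition~A.3.3.2]{HTT}, as recorded just before Lemma~\ref{lem:bifibrant-inclusion}. By Lemma~\ref{lem:bifibrant-inclusion}, $(\mathbf A^{\mathcal C})^\circ$ is locally Kan, so its coherent nerve is an $\infty$-category.

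Next I identify the two sides of the cited equivalence with those in the statement. On the source side, Definition~\ref{def:BG} gives $N^{\mathrm{hc}}\mathcal C=\mathbf BG$ on the nose. On the target side, Proposition~\ref{prop:strong-cofibrant} shows that every object of $\mathcal V$ is bifibrant. Hence $\mathbf A^\circ=(\mathcal V,\operatorname{Map})$ as a simplicial category, and $N^{\mathrm{hc}}(\mathbf A^\circ)=\mathcal V^\infty$ exactly as in Definition~\ref{def:underlying-infinity}. The composite is then $\Theta$.

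The main obstacle is making the comparison functor explicit, since Lurie states the result as an equivalence with a functor category and produces the map through the $\operatorname{Path}\dashv N^{\mathrm{hc}}$ adjunction. I would construct $\Theta$ directly. The evaluation simplicial functor
\[
(\mathbf A^{\mathcal C})^\circ\times\mathcal C\to\mathbf A^\circ
\]
sends $(F,x)$ to $F(x)$. It is simplicial because mapping spaces in $\mathbf A^{\mathcal C}$ are ends, and each end comes with a projection to $\operatorname{Map}(F(x),F'(x))$. Applying $N^{\mathrm{hc}}$ and using that $N^{\mathrm{hc}}$ preserves products (it is a right adjoint) gives
\[
N^{\mathrm{hc}}\bigl((\mathbf A^{\mathcal C})^\circ\bigr)\times\mathbf BG\to\mathcal V^\infty,
\]
whose adjoint is $\Theta$. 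This is the comparison map in \cite[Proposition~4.2.4.4]{HTT}, so it is an equivalence.

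If a self-contained check is preferred, I would instead verify the two conditions of an equivalence directly. Essential surjectivity comes from rectifying homotopy coherent diagrams via $\operatorname{Path}[\mathbf BG]\to B^{\mathrm{simp}}G$, which is a Dwyer--Kan equivalence by Proposition~\ref{prop:hc-nerve-properties}(4), followed by projective cofibrant replacement. Full faithfulness comes from comparing the end $\operatorname{Map}(F,F')$ for cofibrant $F$ with the mapping spaces of the functor $\infty$-category via Proposition~\ref{prop:hc-nerve-properties}(2). In the write-up I would simply cite Lurie for these two steps. The warning preceding the theorem explains why the restriction to $(\mathbf A^{\mathcal C})^\circ$ cannot be dropped: the inclusion of bifibrant diagrams need not be a Dwyer--Kan equivalence before localization, so the comparison must use cofibrant diagrams.
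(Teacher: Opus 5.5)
Your proposal follows the paper's proof. Both apply \cite[Proposition~4.2.4.4]{HTT} with $\mathbf A=\mathcal V$ and $\mathcal C=B^{\mathrm{simp}}G$, check that $\mathcal V$ is combinatorial simplicial and that the projective structure exists, and use Proposition~\ref{prop:strong-cofibrant} to identify $N^{\mathrm{hc}}(\mathbf A^\circ)$ with $\mathcal V^\infty$.

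One correction: the cited proposition takes a simplicial set $S$ together with an equivalence $\operatorname{Path}[S]\to\mathcal C$; it is not stated for an arbitrary small $\mathcal C$ with $S=N^{\mathrm{hc}}\mathcal C$. The counit Dwyer--Kan equivalence of Proposition~\ref{prop:hc-nerve-properties}(4), available because $G$ is Kan, is therefore the source hypothesis of your main argument, as in the paper, and not only part of your optional self-contained check.
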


\begin{proof}
Apply \cite[Proposition 4.2.4.4]{HTT} with
$\mathbf{A}:=\mathcal{V}$, $\mathcal{C}:=B^{\mathrm{simp}}G$,
$K:=N^{\mathrm{hc}}(\mathcal C)=\mathbf BG$, and $U:=\mathbf A$.
The enrichment category is $\mathcal S:=\mathsf{sSet}$ with its
Kan model structure.

\emph{Source comparison.}
Both $\mathcal C$ and $K$ are small in the fixed universe.
The source $B^{\mathrm{simp}}G$ is locally Kan because its single mapping
space is $G$, and $G$ is Kan by
Proposition~\ref{prop:simplicial-group-kan}. By
Proposition~\ref{prop:hc-nerve-properties}(4), the counit
\[
\operatorname{Path}\bigl[N^{\mathrm{hc}}(B^{\mathrm{simp}}G)\bigr]
\longrightarrow B^{\mathrm{simp}}G
\]
is a Dwyer--Kan equivalence. Write
$u:\operatorname{Path}[K]\to\mathcal C$ for this map; it is the source
equivalence required by the cited proposition.

\emph{Model-category conditions.}
The category $\mathcal S$ is excellent
\cite[Example A.3.2.18]{HTT}. Proposition~\ref{prop:model-structure-svect}
supplies the combinatorial simplicial model structure on $\mathbf A$,
with functorial factorizations and cofibrant replacements. By
\cite[Definition A.3.4.1]{HTT}, a full subcategory is a chunk when the
functorial factorization properties (a) and (b) hold; by
\cite[Example A.3.4.4]{HTT}, $\mathbf{A}$ is a chunk of itself. Since
$\mathcal{C}:=B^{\mathrm{simp}}G$ is a small simplicial category,
\cite[Proposition A.3.3.2]{HTT} endows $\mathbf{A}^{\mathcal{C}}$ with
a projective combinatorial model structure, and
\cite[Remark A.3.3.4]{HTT} shows that this is again a simplicial model
structure; applying
\cite[Example A.3.4.4]{HTT} to $\mathbf{A}^{\mathcal{C}}$ with its
projective model structure shows that $\mathbf{A}^{\mathcal{C}}$ is a
chunk of itself. Thus, by \cite[Definition A.3.4.9]{HTT},
$U=\mathbf{A}$ is a $\mathcal{C}$-chunk of $\mathbf{A}$.

\emph{Target and conclusion.}

By Proposition~\ref{prop:strong-cofibrant}, every object of
$\mathcal{V}$ is both fibrant and cofibrant, so
$\mathbf{A}^{\circ}=\mathcal{V}$; hence
$N^{\mathrm{hc}}(\mathbf{A}^{\circ})=\mathcal{V}^{\infty}$.

The cited rectification proposition therefore gives
\[
N^{\mathrm{hc}}\bigl((\mathbf{A}^{\mathcal{C}})^{\circ}\bigr)
\;\simeq\;
\operatorname{Fun}\bigl(\mathbf{B}G,\mathcal{V}^{\infty}\bigr).
\]

\end{proof}

\begin{proposition}[$n$-representation function model]
\label{prop:n-representation-function-model}
Let $n\ge1$.  Write
\[
\bigl(\mathcal V^{B^{\mathrm{simp}}G}\bigr)^{\circ,(n)}
\]
for the full simplicial subcategory of projectively bifibrant diagrams
$F$ satisfying
\[
H_i\bigl(NF(\ast)\bigr)=0\qquad(i\ge n).
\]
The equivalence $\Theta$ of Theorem~\ref{thm:function-model} restricts to
an equivalence
\[
N^{\mathrm{hc}}
\Bigl(\bigl(\mathcal V^{B^{\mathrm{simp}}G}\bigr)^{\circ,(n)}\Bigr)
\xrightarrow{\ \simeq\ }
\mathbf{Rep}^{(n)}_k(G),
\]
and this equivalence is compatible, up to its canonical natural
equivalence, with evaluation at the unique object.
\end{proposition}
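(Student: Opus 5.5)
The plan is to restrict the equivalence $\Theta$ of Theorem~\ref{thm:function-model} to full subcategories on both sides and check that the restriction is still fully faithful and essentially surjective. Full faithfulness is automatic. The source $\bigl(\mathcal V^{B^{\mathrm{simp}}G}\bigr)^{\circ,(n)}$ is a full simplicial subcategory of $(\mathbf A^{\mathcal C})^\circ$, and by Proposition~\ref{prop:hc-nerve-properties} its coherent nerve is a full $\infty$-subcategory of $N^{\mathrm{hc}}((\mathbf A^{\mathcal C})^\circ)$. The target $\mathbf{Rep}^{(n)}_k(G)=\operatorname{Fun}(\mathbf BG,\mathbf{Vect}^{(n)}_k)$ is likewise the full subcategory of $\mathcal R_k(G)$ on functors whose value at the unique vertex lies in $\mathbf{Vect}^{(n)}_k$. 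Here we use that $\mathbf{Vect}^{(n)}_k\subseteq\mathcal V^\infty$ is full and closed under equivalence, and that $\mathbf BG$ has a single vertex, so a functor lands in the full subcategory exactly when its vertex value does. A fully faithful functor between $\infty$-categories restricts to an equivalence between full subcategories precisely when the two object classes correspond up to equivalence.

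It therefore suffices to show that, for a projectively bifibrant diagram $F$, the object $\operatorname{ev}_\ast\Theta(F)\in\mathcal V^\infty$ is equivalent to $F(\ast)$, naturally in $F$. This is the main obstacle: \cite[Proposition~4.2.4.4]{HTT} builds $\Theta$ through $\operatorname{Path}[K]\to\mathcal C$, and the compatibility with evaluation has to be extracted from that construction. I would proceed as follows. The rectification functor sends $F$ to the adjoint of the composite $\operatorname{Path}[\mathbf BG]\xrightarrow{u}B^{\mathrm{simp}}G\xrightarrow{F}\mathcal V^\circ$, that is, to $N^{\mathrm{hc}}(F)\circ(\text{unit }\mathbf BG\to N^{\mathrm{hc}}\operatorname{Path}[\mathbf BG])$, possibly after a fibrant--cofibrant replacement. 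The counit $u$ is the identity on objects, since $\operatorname{Path}[N^{\mathrm{hc}}\mathcal C]$ and $\mathcal C$ have the same objects. Evaluating at the vertex $\ast$ therefore returns $F(\ast)$ itself, or an object weakly equivalent to it if a replacement intervenes. In the latter case the replacement is an objectwise weak equivalence, and all objects of $\mathcal V$ are bifibrant by Proposition~\ref{prop:strong-cofibrant}, so it becomes an equivalence in $\mathcal V^\infty$. This produces the canonical natural equivalence $\operatorname{ev}_\ast\circ\Theta\simeq\operatorname{ev}_\ast\circ\iota$ on $N^{\mathrm{hc}}((\mathbf A^{\mathcal C})^\circ)$, where $\iota$ is the inclusion into $\mathcal V^\infty$ induced by evaluation of strict diagrams.

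Given this, the homology condition $H_i(NF(\ast))=0$ for $i\ge n$ is invariant under weak equivalence, as noted in Definition~\ref{def:n-linear-space}. Hence $\Theta(F)\in\mathbf{Rep}^{(n)}_k(G)$ exactly when $F$ lies in the $(n)$-subcategory, so $\Theta$ maps the source into the target. For essential surjectivity, take $X\in\mathbf{Rep}^{(n)}_k(G)$. By Theorem~\ref{thm:function-model} there is a projectively bifibrant $F$ with $\Theta(F)\simeq X$. Evaluating at $\ast$ gives $F(\ast)\simeq X(\ast)$, which is an $n$-linear space, so $F$ lies in the source subcategory. Restricting the natural equivalence of the previous paragraph along the inclusion then gives the stated compatibility with evaluation at the unique object.
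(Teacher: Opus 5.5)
Your proposal is correct and follows essentially the same route as the paper. Both arguments rest on three points: evaluation at $\ast$ commutes with $\Theta$ up to the canonical equivalence; since $\mathbf{Vect}^{(n)}_k$ is full and replete, both subcategories are exactly inverse images under evaluation; so full faithfulness comes from fullness, and essential surjectivity from choosing a preimage under $\Theta$. Your justification of the evaluation compatibility, via the counit $\operatorname{Path}[\mathbf BG]\to B^{\mathrm{simp}}G$ being the identity on objects, is a more explicit version of the paper's one-line appeal to naturality of the HTT comparison.
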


\begin{proof}
The comparison in HTT Proposition~4.2.4.4 is natural under restriction
of diagrams. At the unique vertex, its comparison is the ordinary
evaluation of a diagram at $\ast$, so it gives a naturally commutative
square
\[
\begin{CD}
N^{\mathrm{hc}}
\bigl((\mathcal V^{B^{\mathrm{simp}}G})^\circ\bigr)
@>{\Theta}>>
\operatorname{Fun}(\mathbf BG,\mathcal V^\infty)\\
@V{\operatorname{ev}_\ast}VV @VV{\operatorname{ev}_\ast}V\\
\mathcal V^\infty @= \mathcal V^\infty.
\end{CD}
\]
The subcategory $\mathbf{Vect}^{(n)}_k\subset\mathcal V^\infty$ is full
and replete, because membership is the weak-equivalence-invariant
condition $H_i(NV)=0$ for $i\ge n$.  Consequently the source displayed
in the statement is exactly the inverse image of
$\mathbf{Vect}^{(n)}_k$ under the left evaluation functor, while
$\mathbf{Rep}^{(n)}_k(G)$ is the inverse image under the right evaluation
functor.

Full faithfulness of the restriction follows from fullness of these
subcategories.  For essential surjectivity, choose a preimage under
$\Theta$ of an $n$-representation.  Evaluation compatibility identifies
its base-point value, up to equivalence, with an object of
$\mathbf{Vect}^{(n)}_k$; repleteness therefore puts the chosen preimage in
the indicated source subcategory.  This proves the restricted equivalence
and its evaluation compatibility.
\end{proof}

\subsection{Equivariant Hom computes derived mapping spaces}\label{sec:mapping-spaces-preserved}

\begin{theorem}\label{thm:mapping-spaces-preserved}
Let $(V,\rho),(W,\sigma)$ be strict representations whose
corresponding diagrams $F_{V,\rho},F_{W,\sigma}$ are projectively
cofibrant (and hence bifibrant). Then
\[
\operatorname{Und}\Bigl(\underline{\operatorname{Hom}}_{\mathbf{Rep}}
\bigl((V,\rho),(W,\sigma)\bigr)\Bigr)
\;\simeq\;
\operatorname{Map}_{\mathcal{R}_k(G)}
\bigl(\Theta F_{V,\rho},\Theta F_{W,\sigma}\bigr).
\]
\end{theorem}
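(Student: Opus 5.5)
The plan is to factor the comparison through the simplicial category $(\mathbf A^{\mathcal C})^\circ$ of Lemma~\ref{lem:bifibrant-inclusion}, with $\mathbf A=\mathcal V$ and $\mathcal C=B^{\mathrm{simp}}G$, and then transport across the equivalence $\Theta$ of Theorem~\ref{thm:function-model}.

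\emph{Step 1: identify the left side with a strict mapping space.} By Theorem~\ref{thm:rep-strict-functor} and the discussion following it, $\operatorname{Und}$ of the equivariant-Hom equalizer equals the end
\[
\int_{x\in B^{\mathrm{simp}}G}\operatorname{Map}\bigl(F_{V,\rho}(x),F_{W,\sigma}(x)\bigr),
\]
because $\operatorname{Und}$ creates limits. This end is the simplicial mapping space $\operatorname{Map}_{\mathbf A^{\mathcal C}}(F_{V,\rho},F_{W,\sigma})$. Both diagrams are projectively cofibrant, and they are fibrant by Lemma~\ref{lem:bifibrant-inclusion}. Hence they are objects of the full subcategory $(\mathbf A^{\mathcal C})^\circ$, which has the same mapping space, and that space is Kan.

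\emph{Step 2: pass to the coherent nerve.} Since $(\mathbf A^{\mathcal C})^\circ$ is locally Kan, Proposition~\ref{prop:hc-nerve-properties}(2) gives a homotopy equivalence of Kan complexes
\[
\operatorname{Map}_{(\mathbf A^{\mathcal C})^\circ}(F_{V,\rho},F_{W,\sigma})\simeq\operatorname{Map}_{N^{\mathrm{hc}}((\mathbf A^{\mathcal C})^\circ)}(F_{V,\rho},F_{W,\sigma}).
\]

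\emph{Step 3: transport along $\Theta$.} $\Theta$ is an equivalence of $\infty$-categories, hence fully faithful. It therefore induces homotopy equivalences
\[
\operatorname{Map}_{N^{\mathrm{hc}}((\mathbf A^{\mathcal C})^\circ)}(F,F')\simeq\operatorname{Map}_{\mathcal R_k(G)}(\Theta F,\Theta F').
\]
This uses the characterization of equivalences as fully faithful and essentially surjective functors (Kerodon, Theorem~4.6.2.21). Composing the three identifications gives the statement.

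\emph{Main obstacle.} I expect the main difficulty to be Step 1. One must check that the $\mathsf{sSet}$-enriched structure used by HTT Proposition~4.2.4.4 for the projective model structure is exactly the end of the $\operatorname{Map}$ spaces, with $\operatorname{Map}=\operatorname{Und}\underline{\operatorname{Hom}}_{\mathcal V}$. Otherwise the strict and $\infty$-categorical mapping spaces would be compared with respect to different enrichments. I would confirm this by noting that HTT Remark~A.3.3.4 takes the simplicial structure on $\mathbf A^{\mathcal C}$ to be the objectwise tensor with the end as mapping object, and that Proposition~\ref{prop:model-structure-svect} fixes the simplicial structure of $\mathcal V$ with mapping spaces $\operatorname{Und}\underline{\operatorname{Hom}}_{\mathcal V}$. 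The cofibrancy hypothesis is essential, in view of the Warning after Lemma~\ref{lem:bifibrant-inclusion}: it is exactly what places the objects in $(\mathbf A^{\mathcal C})^\circ$, where the strict mapping space is derived.
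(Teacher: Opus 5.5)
Your proposal is correct and follows the paper's proof essentially step for step. You first identify $\operatorname{Und}$ of the equivariant Hom with the strict mapping space in the locally Kan category $(\mathcal V^{B^{\mathrm{simp}}G})^\circ$ via Theorem~\ref{thm:rep-strict-functor}, then pass to the coherent nerve by the comparison of Proposition~\ref{prop:hc-nerve-properties}(2) (Kerodon Remark~4.6.7.6), and finally transport along the equivalence $\Theta$. Your extra check that the simplicial enrichment used with HTT Proposition~4.2.4.4 is the end of the $\operatorname{Map}$ spaces is the same point the paper records in the discussion preceding Lemma~\ref{lem:bifibrant-inclusion} and at the end of the proof of Theorem~\ref{thm:rep-strict-functor}.
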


\begin{proof}

After applying $\operatorname{Und}$,
Theorem~\ref{thm:rep-strict-functor} identifies the left-hand Kan complex
with the mapping simplicial set
\[
\operatorname{Map}_{(\mathcal V^{B^{\mathrm{simp}}G})^\circ}
(F_{V,\rho},F_{W,\sigma}).
\]
The two diagrams lie in the locally Kan simplicial category
$(\mathcal V^{B^{\mathrm{simp}}G})^\circ$. Hence the canonical comparison
map of \cite[Remark~4.6.7.6, Tag~\href{https://kerodon.net/tag/01LF}{01LF}]{Kerodon} is a homotopy
equivalence from this
Kan complex to
\[
\operatorname{Map}_{N^{\mathrm{hc}}
((\mathcal V^{B^{\mathrm{simp}}G})^\circ)}
(F_{V,\rho},F_{W,\sigma}).
\]
Finally, the equivalence $\Theta$ of Theorem~\ref{thm:function-model}
induces an equivalence on this mapping space, giving the displayed result.
\end{proof}

\begin{corollary}[strict $n$-representation mapping spaces]
\label{cor:n-representation-mapping-model}
If $F,F'$ lie in
$\bigl(\mathcal V^{B^{\mathrm{simp}}G}\bigr)^{\circ,(n)}$, then their
simplicial mapping object computes the mapping space between the
corresponding objects of $\mathbf{Rep}^{(n)}_k(G)$.
\end{corollary}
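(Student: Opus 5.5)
The corollary should follow by combining Theorem~\ref{thm:mapping-spaces-preserved} with the restricted equivalence of Proposition~\ref{prop:n-representation-function-model}, using that both relevant subcategories are full.

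\emph{Step 1: the strict mapping object.} Let $F,F'$ lie in $\bigl(\mathcal V^{B^{\mathrm{simp}}G}\bigr)^{\circ,(n)}$. By Theorem~\ref{thm:rep-strict-functor}, they correspond to strict representations $(V,\rho)$ and $(W,\sigma)$. Their simplicial mapping object is the $\mathcal V$-valued equivariant-Hom equalizer. Its underlying simplicial set is the end $\int_{x}\operatorname{Map}(F(x),F'(x))$. Since $(\cdot)^{\circ,(n)}$ is a full simplicial subcategory of $(\mathcal V^{B^{\mathrm{simp}}G})^\circ$, this mapping space is the same whether it is computed in the subcategory or in the ambient bifibrant category. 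By Lemma~\ref{lem:bifibrant-inclusion} it is a Kan complex.

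\emph{Step 2: comparison with the coherent nerve.} Theorem~\ref{thm:mapping-spaces-preserved} applies directly to projectively cofibrant diagrams. It identifies this Kan complex, up to homotopy equivalence, with $\operatorname{Map}_{\mathcal R_k(G)}(\Theta F,\Theta F')$. The identification is the composite of two maps:
\begin{itemize}
\item the Kerodon comparison map into the mapping space of $N^{\mathrm{hc}}$, which is a homotopy equivalence by Proposition~\ref{prop:hc-nerve-properties}(2);
\item the map induced on mapping spaces by the equivalence $\Theta$.
\end{itemize}

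\emph{Step 3: passing to the finite-height subcategory.} By Proposition~\ref{prop:n-representation-function-model}, $\Theta F$ and $\Theta F'$ lie in $\mathbf{Rep}^{(n)}_k(G)$. This is a full $\infty$-subcategory of $\mathcal R_k(G)=\operatorname{Fun}(\mathbf BG,\mathcal V^\infty)$: it is $\operatorname{Fun}(\mathbf BG,\mathbf{Vect}^{(n)}_k)$ with $\mathbf{Vect}^{(n)}_k$ full in $\mathcal V^\infty$, and a functor category into a full subcategory is full in the ambient functor category. Its mapping spaces are therefore those of $\mathcal R_k(G)$. Chaining Steps 1 through 3 gives the claim.

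The only point needing care is this fullness assertion. A natural transformation between functors landing in $\mathbf{Vect}^{(n)}_k$ is an edge of $\operatorname{Fun}(\mathbf BG,\mathcal V^\infty)$ whose vertices land in the subcategory. Since the subcategory is full, the whole simplex $\Delta^1\times\mathbf BG\to\mathcal V^\infty$ factors through it. The same argument applies to the higher simplices used to model mapping spaces. I expect this step to be routine rather than a real obstacle.
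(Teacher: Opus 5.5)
Your proposal is correct and matches the paper's proof, which applies Theorem~\ref{thm:mapping-spaces-preserved} and notes that both $\bigl(\mathcal V^{B^{\mathrm{simp}}G}\bigr)^{\circ,(n)}$ and $\mathbf{Rep}^{(n)}_k(G)\subseteq\mathcal R_k(G)$ are full, so restricting to them leaves the mapping spaces unchanged. Your check that $\operatorname{Fun}(\mathbf BG,-)$ carries a full simplicial subset to a full subcategory is exactly the step the paper leaves implicit.
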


\begin{proof}
Apply Theorem~\ref{thm:mapping-spaces-preserved}. The source and target
subcategories in Proposition~\ref{prop:n-representation-function-model}
are full, so restricting to them leaves the relevant mapping spaces
unchanged.
\end{proof}

For arbitrary strict representations, choose projective cofibrant
replacements
\[
QF_{V,\rho}\xrightarrow{\sim}F_{V,\rho},
\qquad
QF_{W,\sigma}\xrightarrow{\sim}F_{W,\sigma}.
\]
Every diagram is already projectively fibrant by
Lemma~\ref{lem:bifibrant-inclusion}, so $QF_{V,\rho}$ and
$QF_{W,\sigma}$ are bifibrant. By
Theorem~\ref{thm:rep-strict-functor} they correspond to strict
representations, say $Q(V,\rho)$ and $Q(W,\sigma)$. The objects
$\Theta QF_{V,\rho}$ and $\Theta QF_{W,\sigma}$ depend only on the
original diagrams up to equivalence, and the theorem gives
\[
\operatorname{Und}\,
\underline{\operatorname{Hom}}_{\mathbf{Rep}}
(Q(V,\rho),Q(W,\sigma))
\simeq
\operatorname{Map}_{\mathcal R_k(G)}
(\Theta QF_{V,\rho},\Theta QF_{W,\sigma}).
\]
Different choices of projective cofibrant replacement give equivalent
objects and equivalent mapping spaces in the model-category localization.
Equivalently, the right side is the derived mapping space of the
original pair; the displayed left side is one point-set computation of
it using projectively cofibrant replacements.

\subsection{The module-theoretic description}\label{sec:module-bridge}

\begin{definition}\label{def:module-category-enriched}

A left $k[G]$-module is a simplicial $k$-vector space $M$ together with
a simplicial map
\[
\mu:k[G]\otimes_{\mathcal V} M\to M
\]
such that for every $n\ge0$,
$\mu(e_n\otimes_{\mathcal V} m)=m$ and
$\mu((ab)\otimes_{\mathcal V} m)=\mu(a\otimes_{\mathcal V}\mu(b\otimes_{\mathcal V} m))$
for all $a,b\in k[G]_n$ and $m\in M_n$. The ordinary category
$k[G]\text{-}\mathbf{Mod}$ has these modules as objects and the
simplicial maps $f:M\to N$ with $f(a\,m)=a\,f(m)$ for all $a\in k[G]_n$
and $m\in M_n$ as morphisms. It is enriched over $\mathcal{V}$: for
modules $M,N$, the mapping object
\[
\underline{\operatorname{Hom}}_{k[G]\text{-}\mathbf{Mod}}(M,N)
\subseteq
\underline{\operatorname{Hom}}_{\mathcal{V}}(M,N)
\]
is the sub-simplicial vector space whose $n$-simplices are the simplicial
maps $f:M\otimes_{\mathcal V} k[\Delta^n]\to N$ satisfying
\[
f(av\otimes_{\mathcal V} u)=a\,f(v\otimes_{\mathcal V} u)
\]
for all $r\ge0$, $a\in k[G]_r$, $v\in M_r$, and
$u\in k[\Delta^n]_r$. Here $k[G]$ acts on the first tensor factor and
acts trivially on $k[\Delta^n]$. In degree zero this condition recovers
the ordinary module morphisms. Composition and identities are inherited
from $\mathcal{V}$.

\end{definition}

{
The mapping subobjects and the inherited operations form a
$\mathcal V$-enrichment, as follows.

For fixed $M,N$ and $n\ge0$, the displayed
condition is a family of $k$-linear equations in $f$, so the admissible
$n$-simplices form a $k$-linear subspace of
$\underline{\operatorname{Hom}}_{\mathcal{V}}(M,N)_n$.
If $f$ is admissible, then its faces
and degeneracies in
$\underline{\operatorname{Hom}}_{\mathcal{V}}(M,N)$ are obtained by
precomposing with the simplicial operators of $\Delta^\bullet$. Because
both the $k[G]$-action and the cosimplicial structure of
$\Delta^\bullet$ are simplicial, each face and degeneracy of $f$
satisfies the same equation. The admissible simplices therefore form a
sub-simplicial vector space.

The composition and identity maps of the
self-enrichment of $\mathcal{V}$ restrict to these subobjects. If $g$
and $h$ are admissible, then for
$\Delta(u)=\sum u_{(1)}\otimes u_{(2)}$,
\[
\begin{aligned}
\bigl(h\circ(g\otimes_{\mathcal V}1)\circ(1\otimes_{\mathcal V}\Delta)\bigr)(am\otimes u)
&=\sum h\bigl(a\,g(m\otimes u_{(1)})\otimes u_{(2)}\bigr)\\
&=\sum a\,h\bigl(g(m\otimes u_{(1)})\otimes u_{(2)}\bigr)\\
&=a\,\bigl(h\circ(g\otimes_{\mathcal V}1)\circ(1\otimes_{\mathcal V}\Delta)\bigr)(m\otimes u),
\end{aligned}
\]
so the composite is admissible.
Let $\varepsilon:k[\Delta^n]\to\underline k$ be the
augmentation sending every simplex to $1$. For each $n$, the unit map
$\underline{k}\to\underline{\operatorname{Hom}}_{\mathcal{V}}(M,M)$
sends $1\in\underline k_n$ to the simplicial map
\[
M\otimes_{\mathcal V} k[\Delta^n]\to M,
\qquad
m\otimes u\mapsto \varepsilon(u)m.
\]
It sends $am\otimes u$ to
$\varepsilon(u)(am)=a(\varepsilon(u)m)$, hence it is admissible; its
$0$-simplex is $\mathrm{id}_M$.

Associativity and the unit equations are inherited from the
self-enrichment of $\mathcal V$.
}

\begin{theorem}\label{thm:module-bridge}
There is an isomorphism of $\mathcal{V}$-enriched categories
\[
\mathbf{Rep}_k^{\mathrm{enr}}(G)
\;\cong\;
k[G]\text{-}\mathbf{Mod}.
\]
Via Theorem~\ref{thm:rep-strict-functor} and this enriched isomorphism,
equip $k[G]\text{-}\mathbf{Mod}$ with the transported projective model
structure from $\mathcal V^{B^{\mathrm{simp}}G}$, and write
$(k[G]\text{-}\mathbf{Mod})^\circ$ for the full enriched
subcategory of bifibrant modules. This transport preserves the
specified weak equivalences, fibrations, and cofibrations. Then there
is an equivalence of
$\infty$-categories
\[
\mathcal{R}_k(G)
\;\simeq\;
N^{\mathrm{hc}}\bigl((k[G]\text{-}\mathbf{Mod})^\circ\bigr).
\]
\end{theorem}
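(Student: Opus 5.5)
The plan is to build the isomorphism on objects and on hom-objects, and then obtain the $\infty$-categorical statement formally from Theorem~\ref{thm:function-model}.

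\emph{Objects.} A strict representation $(V,\rho)$ yields, through the tensor--mapping-space adjunction already used in Section~\ref{sec:enriched-strict-functors}, the linearly extended action $\alpha:k[G]\otimes_{\mathcal V}V\to V$. In degree $n$, an $n$-simplex $\rho(g)\in\operatorname{Aut}(V)_n$ is a map $V\otimes k[\Delta^n]\to V$, and $\alpha_n(g\otimes v)=\rho(g)_n(v\otimes\operatorname{id}_{[n]})$. The unit and associativity axioms of Definition~\ref{def:module-category-enriched} then follow from $\rho(e)=\operatorname{id}$ and $\rho(gh)=\rho(g)\circ\rho(h)$. To see this, use the composition formula of Definition~\ref{def:underlying-infinity} with $\theta=\operatorname{id}_{[n]}$: it reads $b_n(a_n(v\otimes\operatorname{id})\otimes\operatorname{id})$.

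Conversely, given a module $\mu$, send $g\in G_n$ to the map $V\otimes k[\Delta^n]\to V$ defined by $v\otimes\theta\mapsto\mu(\theta^*g\otimes v)$ for $v\in V_s$ and $\theta:[s]\to[n]$. This is exactly the transpose formula in Definition~\ref{def:simplicial-vector-space}. Simpliciality comes from simpliciality of $\mu$ and of $G$. Multiplicativity uses the same diagonal parameter $\theta$ in the composition formula together with $\theta^*(gh)=\theta^*g\,\theta^*h$. Invertibility follows as in the proof of Theorem~\ref{thm:rep-strict-functor}. The two constructions are mutually inverse by the tensor--Hom bijection, since the adjunction is a bijection between maps $k[G]\otimes V\to V$ and maps $k[G]\to\underline{\operatorname{Hom}}(V,V)$, and under it the monoid axioms correspond.

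\emph{Hom-objects.} Both enrichments are sub-simplicial vector spaces of $\underline{\operatorname{Hom}}_{\mathcal V}(V,W)$, so it suffices to compare them degreewise as subsets. The equivariant Hom is an equalizer, so an $n$-simplex $f:V\otimes k[\Delta^n]\to W$ lies in it exactly when the two adjoint maps agree. Unwinding the two displayed composites on $a\otimes v\otimes u$ in degree $r$ gives $f(\alpha(a\otimes v)\otimes u)=\beta(a\otimes f(v\otimes u))$. This is precisely the admissibility condition $f(av\otimes u)=a\,f(v\otimes u)$. Composition and units agree because both are restricted from $\mathcal V$, so we get an isomorphism of $\mathcal V$-enriched categories that is the identity on hom-objects. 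I expect this unwinding to be the main obstacle, because the swap and the evaluation must be tracked against the diagonal in $k[\Delta^n]$. I would check it first on basis elements $a=g\in G_r$ and $u=\theta$ and then extend by linearity.

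\emph{Model structure and $\infty$-category.} Compose with the isomorphism of Theorem~\ref{thm:rep-strict-functor} to obtain an isomorphism of ordinary and simplicial categories $k[G]\text{-}\mathbf{Mod}\cong\mathcal V^{B^{\mathrm{simp}}G}$. Transport the projective model structure along it. An isomorphism of categories carries weak equivalences, fibrations and cofibrations to the corresponding classes by definition, and it matches bifibrant objects and mapping simplicial sets after applying $\operatorname{Und}$, as in the last paragraph of the proof of Theorem~\ref{thm:rep-strict-functor}. Hence $(k[G]\text{-}\mathbf{Mod})^\circ\cong(\mathcal V^{B^{\mathrm{simp}}G})^\circ$ as simplicial categories. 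Applying $N^{\mathrm{hc}}$ and then Theorem~\ref{thm:function-model} gives $N^{\mathrm{hc}}((k[G]\text{-}\mathbf{Mod})^\circ)\cong N^{\mathrm{hc}}((\mathcal V^{B^{\mathrm{simp}}G})^\circ)\simeq\mathcal R_k(G)$.
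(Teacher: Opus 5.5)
Your proposal is correct and follows essentially the paper's proof: top-simplex evaluation and its transpose $v\otimes\theta\mapsto\mu(\theta^*g\otimes v)$ on objects, identification of both hom-objects with the same subobject of $\underline{\operatorname{Hom}}_{\mathcal V}(V,W)$, and transport of the projective structure followed by Theorem~\ref{thm:function-model}. The differences are cosmetic. You invoke the tensor--Hom bijection where the paper recovers parameter values via $\bigl(\rho_n(g)\bigr)_r(m\otimes\theta)=\bigl(\rho_r(\theta^*g)\bigr)_r(m\otimes\mathrm{id}_{[r]})$, and you unwind the equalizer explicitly (your unwinding is right) where the paper passes from group elements to all of $k[G]$ by linearity.
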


\begin{proof}

First, a strict representation $(V,\rho)$ determines a module
structure by top-simplex evaluation:
\[
\mu_n\Bigl(\sum_i\lambda_i g_i\otimes v\Bigr)
:=\sum_i\lambda_i\,\bigl(\rho_n(g_i)\bigr)_n
       (v\otimes\mathrm{id}_{[n]}),
\qquad g_i\in G_n,\quad v\in V_n.
\]
Here $\rho_n(g_i):V\otimes k[\Delta^n]\to V$ is a simplicial map.
Naturality in both the degree and the parameter simplex makes $\mu$
a simplicial map. The group law, with composition using the parameter
diagonal, gives
\[
\mu_n(e_n\otimes v)=v,\qquad
\mu_n(ab\otimes v)=\mu_n(a\otimes\mu_n(b\otimes v))
\]
first on group-basis elements and then by bilinearity.

Conversely, let $\mu:k[G]\otimes_{\mathcal V}M\to M$ be a module
structure. For $g\in G_n$, define a simplicial map
$\rho_n(g):M\otimes k[\Delta^n]\to M$ by
\[
\bigl(\rho_n(g)\bigr)_r(m\otimes\theta)
=\mu_r(\theta^*g\otimes m),
\qquad \theta:[r]\to[n],\quad m\in M_r,
\]
and extend linearly in the basis $\theta$ of $k[\Delta^n]_r$.
For $\alpha:[s]\to[r]$, simpliciality follows from
\[
\alpha^*\mu_r(\theta^*g\otimes m)
=\mu_s((\theta\alpha)^*g\otimes\alpha^*m).
\]
For $\sigma:[n']\to[n]$, precomposition in the parameter gives
$\sigma^*\rho_n(g)=\rho_{n'}(\sigma^*g)$.
Composition uses the same parameter $\theta$ in both factors, so
\[
\begin{aligned}
\bigl(\rho_n(g)\circ\rho_n(h)\bigr)_r(m\otimes\theta)
&=\mu_r(\theta^*g\otimes\mu_r(\theta^*h\otimes m))\\
&=\mu_r(\theta^*(gh)\otimes m).
\end{aligned}
\]
The unit acts by $m\otimes\theta\mapsto m$ on each basis simplex.
Thus $\rho_n(e_n)$ is the mapping-space unit and
$\rho_n(g^{-1})$ is the inverse of $\rho_n(g)$; hence
$\rho:G\to\operatorname{Aut}(M)$ is a simplicial group homomorphism.

For a given mapping simplex $\rho_n(g)$, naturality recovers every
parameter value from top-simplex evaluation:
\[
\bigl(\rho_n(g)\bigr)_r(m\otimes\theta)
=\bigl(\rho_r(\theta^*g)\bigr)_r(m\otimes\mathrm{id}_{[r]}).
\]
Consequently the two constructions are inverse on strict representations.
Starting from $\mu$, the reverse composite agrees on every group-basis
element and therefore on all of $k[G]$ by linearity.

The enriched categories have the same mapping objects. By
Definition~\ref{def:module-category-enriched}, an $n$-simplex of
$\underline{\operatorname{Hom}}_{k[G]\text{-}\mathbf{Mod}}(M,N)$ is a
simplicial map $f:M\otimes_{\mathcal V} k[\Delta^n]\to N$ satisfying
\[
f(av\otimes_{\mathcal V} u)=a\,f(v\otimes_{\mathcal V} u)
\]
for all $r\ge0$, $a\in k[G]_r$, $v\in M_r$, and $u\in k[\Delta^n]_r$. If $f$ is such
a map, then taking $a=g$ shows that it is equivariant. Conversely, if
$f$ is equivariant, then for $a=\sum_i\lambda_i g_i$,
\[
f(am\otimes_{\mathcal V} u)
=\sum_i\lambda_i\,f(g_i m\otimes_{\mathcal V} u)
=\sum_i\lambda_i\,g_i f(m\otimes_{\mathcal V} u)
=a\,f(m\otimes_{\mathcal V} u),
\]
using linearity of $f$. Thus the mapping objects coincide, and because
composition and identities are inherited from $\mathcal{V}$ in both
cases, the isomorphism is enriched.

The enriched isomorphism transports the projective model
structure and identifies its full bifibrant subcategories. The second
equivalence is therefore the equivalence of
Theorem~\ref{thm:function-model} after applying
Theorem~\ref{thm:rep-strict-functor} and the displayed enriched
isomorphism.

\end{proof}

\begin{corollary}[$n$-representation module model]
\label{cor:n-representation-module-model}
Let $n\ge1$ and let $G$ be a simplicial group. Give
$k[G]\text{-}\mathbf{Mod}$ the transported projective model structure
of Theorem~\ref{thm:module-bridge}. Let
$(k[G]\text{-}\mathbf{Mod})^{\circ,(n)}$ be its full enriched
subcategory on bifibrant modules $M$ satisfying
\[
H_i(NM)=0\qquad(i\ge n).
\]
Then, taking the coherent nerve of the underlying simplicial category,
there is an equivalence
\[
N^{\mathrm{hc}}\bigl((k[G]\text{-}\mathbf{Mod})^{\circ,(n)}\bigr)
\xrightarrow{\ \simeq\ }\mathbf{Rep}^{(n)}_k(G).
\]
It is compatible, up to a canonical natural equivalence, with the
underlying-object functor to $\mathbf{Vect}^{(n)}_k$ and evaluation at
the unique vertex of $\mathbf BG$.
\end{corollary}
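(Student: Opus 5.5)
The plan is to obtain the corollary by restricting the equivalence of Theorem~\ref{thm:module-bridge}, much as Proposition~\ref{prop:n-representation-function-model} restricted Theorem~\ref{thm:function-model}.

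First I will make precise how the module category sits inside the diagram category. Theorems~\ref{thm:rep-strict-functor} and~\ref{thm:module-bridge} give an isomorphism of $\mathcal V$-enriched categories
\[
k[G]\text{-}\mathbf{Mod}\cong\mathcal V^{B^{\mathrm{simp}}G}.
\]
The projective model structure on modules is transported along this isomorphism, so it matches the bifibrant objects on the two sides. On objects, the isomorphism sends a module $M$ to the diagram $F_M$ with $F_M(\ast)=M$. It leaves the underlying simplicial vector space unchanged: the top-simplex evaluation formula in the proof of Theorem~\ref{thm:module-bridge} changes only the encoding of the action. Hence $H_i(NM)=H_i(NF_M(\ast))$, and the homology condition defining $(k[G]\text{-}\mathbf{Mod})^{\circ,(n)}$ corresponds exactly to the condition defining $(\mathcal V^{B^{\mathrm{simp}}G})^{\circ,(n)}$. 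The enriched isomorphism therefore restricts to an isomorphism of these full subcategories. Passing to underlying simplicial categories by $\operatorname{Und}$ and applying $N^{\mathrm{hc}}$ turns it into an isomorphism of simplicial sets.

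Next I compose this isomorphism with the equivalence of Proposition~\ref{prop:n-representation-function-model}. The result is the equivalence
\[
N^{\mathrm{hc}}\bigl((k[G]\text{-}\mathbf{Mod})^{\circ,(n)}\bigr)\xrightarrow{\ \simeq\ }\mathbf{Rep}^{(n)}_k(G).
\]

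For compatibility with the forgetful functors, note that under the isomorphism the underlying-object functor on modules is strictly equal to evaluation at $\ast$ on diagrams. Both send $M$ to $M$, and both act on mapping objects by the inclusion into $\underline{\operatorname{Hom}}_{\mathcal V}(M,N)$. The square from Proposition~\ref{prop:n-representation-function-model} commutes up to its canonical natural equivalence. Pasting the strictly commuting module--diagram square onto that square gives the asserted compatibility with evaluation at the unique vertex of $\mathbf BG$.

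The one step needing care is confirming that the transported model structure identifies bifibrant modules exactly with projectively bifibrant diagrams. This is what makes the restricted categories match on the nose, not merely up to equivalence. Theorem~\ref{thm:module-bridge} states that the transport preserves the specified weak equivalences, fibrations and cofibrations, so the matching follows. Everything else is bookkeeping of full subcategories and the fact that the identifications are literal equalities of underlying objects.
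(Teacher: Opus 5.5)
Your proposal is correct and matches the paper's argument. The composite enriched isomorphism of Theorems~\ref{thm:rep-strict-functor} and~\ref{thm:module-bridge} preserves underlying simplicial vector spaces and, by definition of the transported structure, bifibrancy, so it identifies the two height-$n$ full subcategories on the nose; Proposition~\ref{prop:n-representation-function-model} and its evaluation compatibility then finish the proof, and your pasting of the strictly commuting module--diagram square is a more explicit version of the paper's remark that both evaluations return the same simplicial vector space $M$.
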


\begin{proof}
The enriched isomorphisms of Theorems~\ref{thm:rep-strict-functor}
and~\ref{thm:module-bridge} preserve underlying simplicial vector
spaces. By the definition of the transported model structure, they
identify bifibrant modules with projectively bifibrant diagrams.
They therefore identify the full subcategories specified by
$H_i(NM)=0$ for $i\ge n$. Apply
Proposition~\ref{prop:n-representation-function-model}, including its
evaluation compatibility: both underlying-object and vertex evaluation
take the same simplicial vector space $M$. Membership is determined
by the homology of $NM$ and is preserved by weak equivalences.
\end{proof}

\section{Detection and coefficient obstructions}\label{sec:detection}
Source truncation gives sufficient conditions for restriction to be an
equivalence. We now examine what can be inferred in the reverse direction.
We consider restriction along a specified group map.

\subsection{What $n$-representations retain}
\label{sec:truncated-reconstruction}

For the coefficient target of Definition~\ref{def:n-linear-space},
the first result shows
that the degree-zero group map is still forced to be an isomorphism
when its induced restriction functor is an equivalence.

\begin{theorem}[degree-zero conservativity]\label{thm:A4}
Let $n\ge1$ and let $f:G\to H$ be a map of simplicial groups.  If
\[
f^*:\mathbf{Rep}^{(n)}_k(H)
\longrightarrow
\mathbf{Rep}^{(n)}_k(G)
\]
is an equivalence, then $\pi_0(f)$ is an isomorphism.  Compatibility
with the $n$-representation fiber functors is automatic because $f^*$
is precomposition.
\end{theorem}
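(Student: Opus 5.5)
The plan is to reduce to height one and then argue with ordinary group algebras. First I will show that an equivalence $f^*:\mathbf{Rep}^{(n)}_k(H)\to\mathbf{Rep}^{(n)}_k(G)$ restricts to an equivalence of height-one representation categories. Inside $\mathbf{Vect}^{(n)}_k$, the full subcategory $\mathbf{Vect}^{(1)}_k$ consists of the objects with homology concentrated in degree $0$. Because this condition is weak-equivalence invariant, $\mathbf{Rep}^{(1)}_k(G)$ is the full subcategory of $\mathbf{Rep}^{(n)}_k(G)$ cut out by the condition on $\omega^{(n)}_G$, and the same holds for $H$. Precomposition does not change the value at the vertex, by the equality $\omega^{(n)}_G\circ f^*=\omega^{(n)}_H$ of Definition~\ref{def:fiber-functor}. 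Hence $f^*$ is fully faithful on these subcategories. It is also essentially surjective onto $\mathbf{Rep}^{(1)}_k(G)$: a preimage of an object whose value lies in height one has an equivalent, hence height-one, value. So $f^*$ restricts to an equivalence $\mathbf{Rep}^{(1)}_k(H)\to\mathbf{Rep}^{(1)}_k(G)$ commuting with evaluation.

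Next I will pass to ordinary modules. Proposition~\ref{prop:one-representations} identifies $\mathbf{Rep}^{(1)}_k(G)$ with $N(\operatorname{Rep}^{\mathrm{ord}}_k(\pi_0G))$, compatibly with the forgetful functors. I need to check that under these identifications $f^*$ corresponds to ordinary restriction along $\varphi:=\pi_0(f):\Gamma:=\pi_0G\to\Lambda:=\pi_0H$. This should follow because $h(\mathbf Bf)$ is the one-object groupoid map given by $\pi_1(\mathbf Bf)=\pi_0(f)$ (Proposition~\ref{prop:bg-homotopy}), and the identification $\operatorname{Fun}(A,N(D))\simeq N(\operatorname{Fun}(hA,D))$ is natural in $A$. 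It follows that restriction $\varphi^*:k[\Lambda]\text{-}\mathrm{Mod}\to k[\Gamma]\text{-}\mathrm{Mod}$ is an equivalence of ordinary categories, and it commutes with the forgetful functors to $\mathsf{Vect}_k$.

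Finally, I will show that $\varphi$ is an isomorphism using extension and restriction of scalars. For injectivity, consider the left adjoint $\varphi_!=k[\Lambda]\otimes_{k[\Gamma]}-$. Since $\varphi^*$ is an equivalence, the unit $M\to\varphi^*\varphi_!M$ is an isomorphism. Apply this to the regular module $M=k[\Gamma]$, whose extension is $k[\Lambda]$ and whose unit is the linear map $k[\varphi]$. This map is then bijective, and since it sends the basis $\Gamma$ to the basis $\Lambda$, this makes $\varphi$ bijective, giving injectivity and surjectivity at once.

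If the transport of the unit is in doubt, there is a more robust route through essential surjectivity. The module $k[\Gamma]$ is isomorphic to $\varphi^*N$ for some $N$. Applying invariants under $\ker\varphi$: every element of $\ker\varphi$ acts trivially on $\varphi^*N$, but on the regular module $k[\Gamma]$ nontrivial elements act freely, forcing $\ker\varphi=1$. For surjectivity, full faithfulness gives $\operatorname{Hom}_\Lambda(k,k[\Lambda/\varphi\Gamma])\cong\operatorname{Hom}_\Gamma(k,k[\Lambda/\varphi\Gamma])$. The right side contains the map hitting the coset $\varphi\Gamma$, which is $\Gamma$-fixed. A $\Lambda$-fixed vector in the permutation module $k[\Lambda/\varphi\Gamma]$ must be constant on orbits with finite support, which forces a finite index. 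A dimension or counting argument on fixed vectors should then give $\varphi\Gamma=\Lambda$. I expect the main obstacle to be the bookkeeping in the second step, namely checking that the height-one identification turns $f^*$ into ordinary restriction along $\pi_0(f)$ naturally. The algebra in the last step is routine once that is in place.
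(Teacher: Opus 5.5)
Your proposal is correct and follows the paper's proof essentially step for step. You restrict $f^*$ to the full replete subcategories whose vertex value has homology only in degree zero. You identify these with $N(\operatorname{Rep}^{\mathrm{ord}}_k(\pi_0 G))$ and $N(\operatorname{Rep}^{\mathrm{ord}}_k(\pi_0 H))$ via Proposition~\ref{prop:one-representations}, so that $f^*$ becomes ordinary restriction along $\pi_0(f)$. You then use the unit of the extension--restriction adjunction at the regular module to show that $k[\pi_0 f]$, and hence $\pi_0 f$, is bijective.

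Your fixed-vector backup argument is valid but unnecessary. The paper handles the compatibility with $(\pi_0 f)^*$ by simply displaying the same commuting square you flagged as the bookkeeping step.
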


\begin{proof}
Let $\mathcal E_0\subset\mathbf{Vect}^{(n)}_k$ be the full replete
subcategory spanned by objects equivalent to vector spaces concentrated
in chain degree zero; equivalently, their normalized homology vanishes
in every positive degree. Theorem~\ref{thm:mapping-homology-formula} shows
that its mapping spaces are $0$-truncated and that
$\mathcal E_0\simeq N(\mathsf{Vect}_k)$ canonically.

For a simplicial group $K$, let $\mathcal D_K$ be the full subcategory of
$\mathbf{Rep}^{(n)}_k(K)$ whose base-point value lies in
$\mathcal E_0$.  Proposition~\ref{prop:one-representations} gives
canonical equivalences, compatible with evaluation,
\[
\mathcal D_K
\simeq
\mathbf{Rep}^{(1)}_k(K)
\simeq
N\bigl(\operatorname{Rep}^{\mathrm{ord}}_k(\pi_0K)\bigr).
\]
Because precomposition does not change the base-point value, $f^*$
preserves these full replete subcategories.  Full faithfulness restricts,
and essential surjectivity restricts as well: a preimage of an object of
$\mathcal D_G$ has an equivalent base-point value and therefore lies in
$\mathcal D_H$.  Hence restriction gives an equivalence fitting into
\[
\begin{CD}
\mathcal D_H @>{f^*}>> \mathcal D_G\\
@V{\simeq}VV @VV{\simeq}V\\
N\operatorname{Rep}^{\mathrm{ord}}_k(\pi_0H)
@>{(\pi_0f)^*}>>
N\operatorname{Rep}^{\mathrm{ord}}_k(\pi_0G).
\end{CD}
\]

The bottom functor is the nerve of the ordinary restriction functor,
so that ordinary functor is an equivalence. Put
$\varphi=\pi_0f:\Gamma\to\Lambda$, $A=k[\Gamma]$, and $B=k[\Lambda]$.
Here $A\text{-}\mathbf{Mod}$ and $B\text{-}\mathbf{Mod}$ contain all left modules.
Restriction along $k[\varphi]$ has the left adjoint
\[
B\otimes_A-:A\text{-}\mathbf{Mod}\longrightarrow B\text{-}\mathbf{Mod}.
\]
Indeed, a $B$-linear map $B\otimes_A M\to N$ is uniquely determined by
the $A$-linear map $m\mapsto F(1\otimes m)$, and conversely this map
extends by $b\otimes m\mapsto bF(1\otimes m)$. Since restriction is an
equivalence, its left adjoint is a quasi-inverse, and the adjunction unit
is an isomorphism at every module. At the regular module it is
\[
A\longrightarrow \operatorname{Res}_\varphi(B\otimes_A A)
\cong\operatorname{Res}_\varphi B.
\]
Under $b\otimes a\mapsto b\,k[\varphi](a)$ this is precisely
$k[\varphi]$. Hence $k[\varphi]$ is a linear isomorphism. If
$\varphi(\gamma)=\varphi(\gamma')$ for distinct group elements, then
the nonzero vector $\gamma-\gamma'$ lies in its kernel. If an element
of $\Lambda$ is missing from the image of $\varphi$, its basis vector
is missing from the linear span of that image. Thus $\varphi$ is both
injective and surjective, and is a group isomorphism.
\end{proof}

\noindent\textbf{Equivalence of module categories.}
Over $\mathbb C$, both $\mathbb C[C_4]$ and
$\mathbb C[C_2\times C_2]$ are isomorphic as algebras to $\mathbb C^4$:
factor $x^4-1$, respectively $x^2-1$ and $y^2-1$, into distinct linear
factors.  Their ordinary representation categories, hence their
$1$-representation categories, are equivalent, although the groups
are not isomorphic. The algebra isomorphisms give equivalences of
all-module categories, with no prescribed compatibility with the
representation tensor product. Such an equivalence therefore does not
determine the group. Theorem~\ref{thm:A4} detects $\pi_0$ using the
restriction functor of a specified group map.

\begin{remark}[source homotopy above degree $n-1$]
Let $\Gamma\ne0$ be abelian.  Since
$\mathbf B K(\Gamma,n)\simeq K(\Gamma,n+1)$ has contractible
$n$-truncation, Theorem~\ref{thm:source-truncation-invariance} gives,
via the specified map $K(\Gamma,n)\to *$, an evaluation-compatible
equivalence
\[
\mathbf{Rep}^{(n)}_k\bigl(K(\Gamma,n)\bigr)
\simeq
\mathbf{Vect}^{(n)}_k.
\]
Local $(n-1)$-truncation of the target makes $n$-representations
insensitive to $\pi_n$ of the source. This example is outside the
homotopy $n$-group range, which stops at $\pi_{n-1}$.
\end{remark}

\subsection{Coefficient homology and failure of detection}
\label{sec:kerodon-direct}

We next consider simplicial groups whose reduced $k$-homology vanishes.
Write
$\mathcal R_k(G)=\operatorname{Fun}(\mathbf BG,\mathcal V^\infty)$
for the untruncated representation category.

\begin{proposition}[coefficient-acyclic sources]
\label{prop:coefficient-acyclic-invisibility}
Let $G$ be a simplicial group such that
\[
\widetilde H_i(|G|;k)=0\qquad(i\ge0).
\]
Then, for the specified map $p:G\to *$, the constant functor
\[
p^*:\mathcal V^\infty
\simeq\mathcal R_k(*)\xrightarrow{\ \simeq\ }\mathcal R_k(G)
\]
is an equivalence with inverse given by vertex evaluation.
For every $r\ge1$, it restricts to an evaluation-compatible equivalence
\[
\mathbf{Vect}^{(r)}_k
\xrightarrow{\ \simeq\ }\mathbf{Rep}^{(r)}_k(G).
\]
\end{proposition}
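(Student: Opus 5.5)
We plan to work in the module model of Theorem~\ref{thm:module-bridge}, where $\mathcal R_k(G)\simeq N^{\mathrm{hc}}((k[G]\text{-}\mathbf{Mod})^\circ)$, and to reduce everything to the augmentation $\varepsilon:k[G]\to\underline k$ being a weak equivalence.

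\emph{The augmentation.} Degreewise, $k[G]$ is the free simplicial vector space on the underlying simplicial set of $G$. By Moore's theorem and Dold--Kan, $H_i(N k[G])\cong H_i(|G|;k)$. The augmentation induces the map to $H_i(\mathrm{pt};k)$, so the hypothesis $\widetilde H_i(|G|;k)=0$ for all $i\ge0$ says exactly that $\varepsilon$ is a weak equivalence in $\mathcal V$. For $i=0$ this requires $|G|$ to be connected, or more precisely $H_0(|G|;k)\cong k$ via $\varepsilon$.

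\emph{Inverse functors.} The constant functor $p^*$ sends $V$ to $V$ with trivial action, i.e.\ the restriction of scalars along $\varepsilon$. Evaluation satisfies $\omega\circ p^*=\mathrm{id}$ strictly, by Definition~\ref{def:fiber-functor}. It therefore suffices to give, for each bifibrant module $M$, a natural equivalence in $\mathcal R_k(G)$ between $M$ and $p^*(M)$, where $M$ on the right is the underlying object. We aim to use the natural zigzag of $k[G]$-module maps
\[
M\cong k[G]\otimes_{k[G]}M\xleftarrow{\ }\mathrm{B}(k[G],k[G],M)\xrightarrow{\ }\mathrm{B}(\underline k,k[G],M)\xrightarrow{\ } \underline k\otimes_{\mathcal V}M\cong p^*M?
\]
The last arrow is problematic, so we will instead use the simpler zigzag
\[
M\xleftarrow{\ \mu\ } k[G]\otimes_{\mathcal V}M\xrightarrow{\ \varepsilon\otimes1\ }\underline k\otimes_{\mathcal V}M\cong M_{\mathrm{triv}}.
\]
In the middle term, $k[G]$ acts diagonally (via $g\mapsto g\otimes g$), which makes both arrows $k[G]$-linear. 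The right arrow is a weak equivalence because tensoring over a field preserves quasi-isomorphisms, via the shuffle map of Remark~\ref{rem:dold-kan-monoidal} and K\"unneth.

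The left arrow is the \textbf{main obstacle}. Using the untwisting isomorphism $g\otimes m\mapsto g\otimes g^{-1}m$, the diagonal module becomes $k[G]\otimes M$ with action on the left factor only. Under this identification $\mu$ becomes $\varepsilon\otimes1$ up to the isomorphism, and hence is also a weak equivalence. Both arrows are underlying weak equivalences, so they are weak equivalences in the projective structure. After passing to the localization, using Theorem~\ref{thm:function-model} and \cite[Theorem~1.3.4.20]{HA} so that weak equivalences between arbitrary diagrams become equivalences, they give a natural equivalence $\mathrm{id}\simeq p^*\circ\omega$. Together with $\omega\circ p^*=\mathrm{id}$, this shows that $p^*$ is an equivalence with inverse $\omega$. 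Naturality in $M$ is clear because each map is natural in $M$. We also need to check that the diagonal and untwisted modules are naturally isomorphic as modules. This is where care is needed: the untwisting must be verified to be simplicial and $k[G]$-linear, which uses that inversion and multiplication in $G$ are simplicial.

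\emph{Restriction.} For each $r\ge1$, both $p^*$ and $\omega$ preserve the underlying object, hence preserve the replete full subcategories cut out by $H_i=0$ for $i\ge r$. The equivalence therefore restricts to $\mathbf{Vect}^{(r)}_k\simeq\mathbf{Rep}^{(r)}_k(G)$, exactly as in the proof of Proposition~\ref{prop:n-representation-function-model}, and it is evaluation-compatible.
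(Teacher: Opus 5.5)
Your strategy matches the paper's: the augmentation is a weak equivalence, a natural zigzag of strict module maps passes through $k[G]\otimes_{\mathcal V}M$, one localizes, and then restricts to the replete finite-height subcategories. The gap is that the zigzag as you specify it does not consist of $k[G]$-module maps.

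\emph{The action on the middle term.} With the diagonal action $g\cdot(h\otimes m)=gh\otimes gm$, the action map is not equivariant: $\mu(gh\otimes gm)=ghgm$, whereas $g\,\mu(h\otimes m)=ghm$. Nor is $\varepsilon\otimes1$ equivariant into the trivial module: it sends $gh\otimes gm$ to $gm$, while equivariance would require $m$. With the diagonal action, $\varepsilon\otimes1$ is a module map to $M$ with its \emph{original} action, so that zigzag never reaches $p^*$.

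Your untwisting sentence is also reversed. The map $\psi(h\otimes m)=h\otimes h^{-1}m$ carries the diagonal module isomorphically onto the free module. It is the action map \emph{out of the free module} that satisfies $\mu\circ\psi=\varepsilon\otimes1$. Transporting the diagonal-module $\mu$ instead gives $h\otimes m\mapsto h^2m$.

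\emph{The repair.} Give $k[G]\otimes_{\mathcal V}UM$ the action on the first factor only. Then $\mu$ is linear by associativity. The map $\varepsilon\otimes1\colon k[G]\otimes_{\mathcal V}UM\to M_{\mathrm{triv}}$ is linear because $\varepsilon$ is multiplicative, and it is a weak equivalence by K\"unneth. The map $\mu$ is a weak equivalence, either by the correctly oriented untwisting or, as in the paper, because $U\mu\circ(\eta\otimes1)=\mathrm{id}_{UM}$ with $\eta$ a weak equivalence (since $\varepsilon\eta=\mathrm{id}$). This is exactly the paper's zigzag $M\leftarrow k[G]\otimes_{\mathcal V}UM\to M_{\mathrm{triv}}$.

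\emph{The localization step.} This step needs more than you give. Theorem~\ref{thm:function-model} only handles projectively bifibrant diagrams. The trivial module $M_{\mathrm{triv}}$ need not be projectively cofibrant, so you cannot read off $p^*(UM)$ inside $N^{\mathrm{hc}}((k[G]\text{-}\mathbf{Mod})^\circ)$.

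The paper instead constructs a functor $E$ from the nerve of \emph{all} strict modules to $\mathcal R_k(G)$, using the evaluation $B^{\mathrm{simp}}G\times\mathcal M\to\mathcal V$. It checks that:
\begin{itemize}
\item $E$ inverts weak equivalences, because equivalences in functor $\infty$-categories are detected at vertices;
\item $E$ agrees with $\Theta$ on bifibrant modules;
\item $E$ induces an equivalence on the localization, via cofibrant replacement and \cite[Theorem~1.3.4.20]{HA};
\item $E(TX)$ is literally the constant functor $p^*X$, and $\operatorname{ev}_*E(M)=UM$.
\end{itemize}
Only with this in hand does the corrected zigzag give the natural equivalence $\mathrm{id}\simeq p^*\circ\omega$.
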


\begin{proof}
Put $A=k[G]$, with augmentation $\varepsilon:A\to k$ and unit
$\eta:k\to A$. The normalized complex of the underlying simplicial
vector space $A$ is the normalized chain complex of the underlying
simplicial set $G$ with coefficients in $k$. Hence
\[
H_i(NA)\cong H_i(|G|;k).
\]
The hypothesis makes $\varepsilon$ a weak equivalence. Since
$\varepsilon\eta=\mathrm{id}_k$, the unit $\eta$ is also a weak
equivalence.

Over a field, tensoring nonnegative chain complexes preserves
quasi-isomorphisms. The natural $k$-linear shuffle comparison of
Proposition~\ref{prop:enriched-dold-kan-comparison} and
Remark~\ref{rem:dold-kan-monoidal} therefore shows that
$X\otimes_k-$ preserves weak equivalences for every $X\in\mathcal V$.

On simplicial left $A$-modules, let $U$ forget the action,
let $F(X)=A\otimes_kX$ with $A$ acting on the first factor, and let
$T(X)$ have underlying object $X$ with action
$a\cdot x=\varepsilon(a)x$. These functors preserve weak equivalences,
and $UT=\mathrm{id}_{\mathcal V}$.
For each module $M$, there are natural $A$-linear maps
\[
M\xleftarrow{\ \mu_M\ }A\otimes_k UM
\xrightarrow{\ \phi_M\ }T(UM),
\qquad
\mu_M(a\otimes x)=a\cdot x,\quad
\phi_M(a\otimes x)=\varepsilon(a)x.
\]
Their $A$-linearity follows respectively from associativity and
$\varepsilon(ba)=\varepsilon(b)\varepsilon(a)$; the formulas commute
with all simplicial operators. The underlying map of $\phi_M$ is
$\varepsilon\otimes1$, hence a weak equivalence. Moreover,
\[
U\mu_M\circ(\eta\otimes1)=\mathrm{id}_{UM},
\]
so two-out-of-three makes $\mu_M$ a weak equivalence as well.
Thus $\mathrm{id}\leftarrow FU\to TU$ is a natural zigzag of weak
equivalences. After localization, both arrows become natural
equivalences. Together with $UT=\mathrm{id}$, this proves that $T$
and $U$ induce inverse equivalences of $\infty$-categories.

\smallskip\noindent\textbf{Localization of strict modules.}
Let $\mathcal M=A\text{-}\mathrm{Mod}$ carry the projective simplicial
model structure of Theorem~\ref{thm:module-bridge}, and let $W$ be its
weak equivalences. All its objects are fibrant. Functorial factorization
gives an ordinary functor
$Q:\mathcal M_{\mathrm{ord}}\to\mathcal M^\circ_{\mathrm{ord}}$
and a natural weak equivalence $q:Q\to\mathrm{id}$, where
$\mathcal M^\circ$ is the full simplicial subcategory of bifibrant modules;
write $W^\circ$ for its weak equivalences.
Naturality of $q$ and two-out-of-three show that $Q$ preserves $W$.
If $j$ denotes inclusion, the maps $jQ\to\mathrm{id}$ and
$Qj\to\mathrm{id}$ become natural equivalences after localization.
Consequently, $j$ induces an equivalence between the two localizations,
as in \cite[Remark~1.3.4.16]{HA}.
Since every cofibrant module is already fibrant,
\cite[Theorem~1.3.4.20]{HA} then gives
\[
N(\mathcal M_{\mathrm{ord}})[W^{-1}]
\simeq N(\mathcal M^\circ_{\mathrm{ord}})[(W^\circ)^{-1}]
\simeq N^{\mathrm{hc}}(\mathcal M^\circ).
\]
Here the second comparison is induced by the canonical map from the
ordinary nerve to the homotopy coherent nerve: in
\cite[Construction~1.3.4.18 and Lemma~1.3.4.19]{HA}, fibrant replacement
on cofibrant objects can be chosen to be the identity.

\smallskip\noindent\textbf{Comparison with coherent representations.}
The enriched diagram identification in Theorem~\ref{thm:module-bridge}
has an evaluation functor
$B^{\mathrm{simp}}G\times\mathcal M\to\mathcal V$.
Taking homotopy coherent nerves, restricting along the canonical map
$N(\mathcal M_{\mathrm{ord}})\to N^{\mathrm{hc}}(\mathcal M)$,
and currying gives
\[
E:N(\mathcal M_{\mathrm{ord}})
\longrightarrow\operatorname{Fun}(\mathbf BG,\mathcal V^\infty).
\]
A weak equivalence of modules has a weak equivalence as its underlying
map in $\mathcal V$, hence $E$ sends it to a pointwise equivalence.
Equivalences in a functor $\infty$-category are detected at vertices
\cite[Theorem~4.4.4.4, Tag~\href{https://kerodon.net/tag/01DK}{01DK}]{Kerodon},
so $E$ induces a functor $\overline E$ on the localization.
On $\mathcal M^\circ$, this evaluation construction is exactly the
comparison of Theorem~\ref{thm:function-model}.
The displayed localization comparisons therefore show that
\[
\overline E:N(\mathcal M_{\mathrm{ord}})[W^{-1}]
\xrightarrow{\ \simeq\ }\mathcal R_k(G)
\]
is an equivalence.

\smallskip\noindent\textbf{The specified functors.}
Before localization, vertex evaluation of $E(M)$ is $UM$, and
$E(TX)$ is the constant functor with value $X$: in every simplicial
degree, each group basis element acts by the identity, so the strict
action factors through $B^{\mathrm{simp}}*$.
These identities also hold on morphisms and are natural in $M$ and $X$.
Using $N(\mathcal V_{\mathrm{ord}})[W_{\mathcal V}^{-1}]
\simeq\mathcal V^\infty$ from Definition~\ref{def:underlying-infinity},
the universal property of localization extends them to
\[
\operatorname{ev}_*\circ\overline E\simeq U_\infty,
\qquad
\overline E\circ T_\infty\simeq p^*.
\]
This universal property applies to natural transformations as well as
functors \cite[Definition~1.3.4.1]{HA}.
In particular, the maps $E(q_{TX}):E(QTX)\to E(TX)=p^*X$
are natural equivalences, even when $TX$ itself is not projectively
cofibrant. Thus the inverse equivalences induced by $T$ and $U$ above
are precisely the specified constant functor and vertex evaluation.

Finally, finite height is a condition on the homology of the evaluated
object. Both $T$ and $U$ preserve this condition, so their inverse
equivalences restrict to the indicated full subcategories.
\end{proof}

The proposition illustrates \cite[Proposition~4.2.4.9]{HA}:
when the unit map $k\to A$ is an equivalence, forgetting the $A$-action
is an equivalence of categories.

\begin{corollary}[prime-to-characteristic invisibility]
\label{thm:prime-to-character-invisibility}
Let $n\ge2$, let $q$ be a prime invertible in $k$, and put
\[
G=K(\mathbb Z/q,n-1).
\]
Then precomposition along $G\to *$ gives an equivalence
\[
\mathbf{Vect}^{(n)}_k
\simeq
\mathbf{Rep}^{(n)}_k(*)
\xrightarrow{\ \simeq\ }
\mathbf{Rep}^{(n)}_k(G)
\]
compatible with the fiber functors, although
$\pi_{n-1}(G)=\mathbb Z/q\ne0$. The constant functor also gives
an equivalence $\mathcal V^\infty\simeq\mathcal R_k(G)$.
\end{corollary}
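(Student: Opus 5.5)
The plan is to reduce the corollary to Proposition~\ref{prop:coefficient-acyclic-invisibility} by checking its hypothesis $\widetilde H_i(|G|;k)=0$ for all $i\ge0$ when $G=K(\mathbb Z/q,n-1)$. For definiteness, realize $G$ as the simplicial abelian group obtained by Dold--Kan from the chain complex $S^{n-1}(\mathbb Z/q)$. This is a simplicial group whose underlying Kan complex has $\pi_{n-1}=\mathbb Z/q$ and no other homotopy. Since $n\ge2$, we have $n-1\ge1$, so $|G|$ is connected. In particular $\widetilde H_0(|G|;k)=0$, and $\pi_{n-1}(G)\ne0$ as the statement asserts.

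The main step, and the only real obstacle, is the vanishing of $\widetilde H_i(K(\mathbb Z/q,n-1);k)$ for $i\ge1$ when $q$ is invertible in $k$. I would prove this by induction on $m=n-1\ge1$.
\begin{enumerate}
\item For $m=1$, the space is $B\mathbb Z/q$, so $H_i(B\mathbb Z/q;k)=H_i(\mathbb Z/q;k)$. This vanishes for $i\ge1$, because $k[\mathbb Z/q]$ is semisimple when $q\in k^\times$ (Maschke), so $k$ is a projective module. Alternatively, the standard periodic resolution shows that the integral homology groups in positive degrees are $\mathbb Z/q$ or $0$, and the universal coefficient theorem kills both $\mathbb Z/q\otimes k$ and $\operatorname{Tor}(\mathbb Z/q,k)$.
\item For the inductive step, use the path--loop fibration $K(\mathbb Z/q,m)\to *\to K(\mathbb Z/q,m+1)$. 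The base is simply connected since $m+1\ge2$, so the Serre spectral sequence with $k$-coefficients has untwisted coefficients. By induction the fiber is $k$-acyclic, so the $E^2$ page collapses to the column $H_*(K(\mathbb Z/q,m+1);k)$. Since the total space is contractible, this forces $\widetilde H_*(K(\mathbb Z/q,m+1);k)=0$.
\end{enumerate}
As a cleaner alternative, one can use the mod-$\ell$ Hurewicz/Serre class theory: for the Serre class $\mathcal C$ of $\mathbb Z[1/q]$-torsion-free-killed groups, all integral homotopy groups lie in the class of $q$-primary torsion groups. Hence all reduced integral homology groups are $q$-primary torsion, and these vanish after tensoring and Tor with $k$, where $q$ is invertible. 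I would present the spectral sequence induction, since it is self-contained.

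With the hypothesis verified, Proposition~\ref{prop:coefficient-acyclic-invisibility} applied to the specified map $p:G\to *$ gives the untruncated equivalence $\mathcal V^\infty\simeq\mathcal R_k(G)$ via the constant functor, with inverse vertex evaluation. It also gives, for $r=n$, the restricted equivalence $\mathbf{Vect}^{(n)}_k\to\mathbf{Rep}^{(n)}_k(G)$.

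It remains to identify $\mathbf{Vect}^{(n)}_k$ with $\mathbf{Rep}^{(n)}_k(*)$. Since $\mathbf B*=\Delta^0$, we have $\operatorname{Fun}(\Delta^0,\mathbf{Vect}^{(n)}_k)=\mathbf{Vect}^{(n)}_k$, with evaluation the identity. Thus the constant functor is literally $p^*$ composed with this identification. Compatibility with the fiber functors then follows from the equality $\omega_G^{(n)}\circ p^*=\omega_*^{(n)}$ of Definition~\ref{def:fiber-functor}.
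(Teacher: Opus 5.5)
Your proposal is correct and follows the paper's proof. Both verify the hypothesis of Proposition~\ref{prop:coefficient-acyclic-invisibility} by induction on $m$, using the path fibration and the homology Serre spectral sequence for the inductive step, and then apply that proposition with $m=n-1$. The only difference is the base case: the paper contracts the normalized homogeneous bar complex of $B\mathbb Z/q$ with an explicit averaging homotopy, which is the chain-level form of your Maschke (or universal-coefficient) argument.
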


\begin{proof}
Write $C_q=\mathbb Z/q$. We prove
$\widetilde H_*(K(C_q,m);k)=0$ for all $m\ge1$.

For $m=1$, use the homogeneous bar model of $BC_q=K(C_q,1)$.
Its unnormalized $k$-chains have basis the diagonal $C_q$-orbits
$[g_0,\ldots,g_a]$, with boundary the alternating sum of deletions
and degeneracies repeating adjacent entries.
The averaging maps
\[
h_a[g_0,\ldots,g_a]
=\frac1q\sum_{g\in C_q}[g,g_0,\ldots,g_a],
\qquad
h_{-1}(1)=\frac1q\sum_{g\in C_q}[g]
\]
are well defined on these orbits by reindexing the sum.
They satisfy $dh+hd=\mathrm{id}$ on the augmented complex.
They also preserve degenerate chains, since a repeated adjacent entry
remains repeated after prepending $g$. Thus they contract the augmented
normalized complex, proving the required reduced homology vanishing.

For $m\ge2$, consider the path fibration
\[
K(C_q,m-1)\simeq\Omega K(C_q,m)
\longrightarrow P K(C_q,m)\longrightarrow K(C_q,m).
\]
The base is simply connected and the total space is contractible.
By induction, the fiber has only $H_0\cong k$. The homology Serre
spectral sequence \cite[Theorem~5.3, p.~527]{HatcherSpectralSequences}
has
\[
E^2_{a,b}
=H_a\bigl(K(C_q,m);H_b(K(C_q,m-1);k)\bigr)
=
\begin{cases}
H_a(K(C_q,m);k),&b=0,\\
0,&b>0.
\end{cases}
\]
All differentials vanish, since their bidegrees are $(-s,s-1)$
for $s\ge2$ and only the bottom row is nonzero.
The first-quadrant spectral sequence converges with a finite filtration
in each total degree to the homology of the contractible path space.
Consequently the base also has only zero-degree homology $k$.
Taking $m=n-1$, Proposition~\ref{prop:coefficient-acyclic-invisibility}
gives both the untruncated and finite-height assertions.
\end{proof}

\noindent\textbf{The regular representation.}
The augmentation $k[G]\to k$ is a weak equivalence, so the regular
representation has homology concentrated in degree zero. Its image
belongs to every finite-height subcategory.
Corollary~\ref{thm:prime-to-character-invisibility} shows that the
representation categories with evaluation fail to distinguish $G$
from the trivial group at every height and in the untruncated case.

\noindent\textbf{Low-dimensional comparison.}
At height two, Heredia--Elgueta
\cite[Theorem~4.3]{HerediaElgueta2016} show that, for a finite $2$-group and
under the corresponding prime-to-characteristic hypotheses, the homotopy
category of representations in Baez--Crans $2$-vector spaces depends only on
$\pi_0$. Their conclusion concerns the homotopy category of a
$2$-categorical coefficient model. For the particular source
$K(\mathbb Z/q,1)$, the equivalence above retains all mapping spaces
and also holds for the untruncated coefficient category.

\subsection{Outlook}
For $n\ge2$ and a prime $q$ invertible in $k$, the sources $K(\mathbb Z/q,n-1)$
and $*$ have different $\pi_{n-1}$ but equivalent representation
categories with evaluation, both at finite height and in the
untruncated case. Thus recovering $Q_nG\simeq\tau_{\le n-1}G$ calls
for restrictions on the source or coefficient targets that detect
these homotopy groups. A separate question is the construction of
a coherent tensor structure on the finite-height coefficient category.

\section*{Appendix A. Notation and source conventions}
\subsection*{A.1 Notation}
\begin{itemize}
\item $\operatorname{const}\Gamma$: the constant simplicial group on an
ordinary group $\Gamma$, with $\Gamma$ in every degree and all faces and
degeneracies equal to the identity (Remark~\ref{rem:homotopy-n-group-strict-models}).
\item $k$: a field; $\mathcal V$: simplicial $k$-vector spaces.
\item $N:\mathcal V\simeq\mathsf{Ch}_{\ge0}(k)$: normalization;
$S^ik$: $k$ concentrated in chain degree $i$.
\item $E_i=k[\Delta^i]/k[\partial\Delta^i]$ for $i\ge0$:
sphere test object; $NE_i\cong S^ik$ and $E_0=k[\Delta^0]$.
\item $\operatorname{Map}(V,W)=
\operatorname{Und}\underline{\operatorname{Hom}}_{\mathcal V}(V,W)$.
\item Within Definition~\ref{def:underlying-infinity},
$\operatorname{ev}_{V,W}$, $m_{V,W,Z}$ and $u_V$ denote evaluation,
composition and the enriched unit for the internal Hom.
\item $\mathsf{sSet}$: simplicial sets; $\mathcal V^\infty$: the
$\infty$-category presented by $(\mathcal V,\operatorname{Map})$.
\item $\mathbf{Vect}^{(n)}_k\subseteq\mathcal V^\infty$:
the full coefficient subcategory on $V$ with $H_i(NV)=0$ for $i\ge n$,
retaining all mapping spaces. Homology concentrated in $0,\ldots,m$
corresponds to $\mathbf{Vect}^{(m+1)}_k$.
\item $\mathbf BG=N^{\mathrm{hc}}(B^{\mathrm{simp}}G)$:
the connected classifying Kan complex;
$\pi_{i+1}(\mathbf BG)\cong\pi_i(G)$ for $i\ge0$, as groups when $i=0$.
\item $\mathbf{Rep}^{(n)}_k(G)=
\operatorname{Fun}(\mathbf BG,\mathbf{Vect}^{(n)}_k)$;
$\omega_G^{(n)}$ is evaluation at the unique vertex.
\item $\mathcal R_k(G)=\operatorname{Fun}(\mathbf BG,\mathcal V^\infty)$:
untruncated coherent representations.
\item $\mathbf{Rep}^{\mathrm{enr}}_k(G)$:
strict enriched representations; the superscript $\circ$ denotes
the full subcategory of projectively bifibrant objects.
\item $\operatorname{Rep}^{\mathrm{ord}}_k(\Gamma)$: ordinary
representations of an ordinary group on all $k$-vector spaces.
\item $(k[G]\text{-}\mathbf{Mod})^{\circ,(n)}$: bifibrant simplicial
modules with $H_i(NM)=0$ for $i\ge n$, with the full enriched Hom.
\item $k[G]$: the simplicial group algebra; $k[G]\text{-}\mathbf{Mod}$:
all simplicial left modules with their equivariant enriched Hom.
\item $h_{\le n}$: homotopy $n$-category; $\pi_{\le n}X$:
the chosen $n$-truncation of a Kan complex.
\item $Q_nG=\operatorname{cosk}^{\mathsf{sGrp}}_n(G|_{\le n})$:
the $(n-1)$-truncated source model.
\item In Proposition~\ref{prop:coefficient-acyclic-invisibility},
$\varepsilon:k[G]\to k$ is the augmentation and $\eta:k\to k[G]$
is the unit. The functors $U$, $F(X)=k[G]\otimes_kX$ and $T$
are respectively forgetting the action, the free module functor,
and assigning the trivial action via $\varepsilon$.
\end{itemize}

\subsection*{A.2 Kerodon citations}

Kerodon citations use the result type, the item number in the online
version checked on 7 October 2026, and its stable Tag.
Each Tag in a citation links to the corresponding statement.
The complete list of cited items is below; stable Tags identify the
statements independently of subsequent numbering changes.

\begin{longtable}{ll}
Current online item & Stable Tag\\
\hline
\endfirsthead
Current online item & Stable Tag\\
\hline
\endhead
Proposition 1.2.5.9 & \href{https://kerodon.net/tag/00MG}{00MG}\\
Theorem 2.4.5.1 & \href{https://kerodon.net/tag/00LJ}{00LJ}\\
Construction 2.4.6.1 & \href{https://kerodon.net/tag/00LX}{00LX}\\
Theorem 2.5.6.1 & \href{https://kerodon.net/tag/00QR}{00QR}\\
Remark 2.5.6.2 & \href{https://kerodon.net/tag/00QS}{00QS}\\
Proposition 2.5.7.8 & \href{https://kerodon.net/tag/00RP}{00RP}\\
Construction 2.5.8.2 & \href{https://kerodon.net/tag/00S2}{00S2}\\
Proposition 2.5.8.3 & \href{https://kerodon.net/tag/00S3}{00S3}\\
Proposition 2.5.8.5 & \href{https://kerodon.net/tag/00S5}{00S5}\\
Construction 2.5.8.6 & \href{https://kerodon.net/tag/00S6}{00S6}\\
Proposition 2.5.8.9 & \href{https://kerodon.net/tag/00S9}{00S9}\\
Theorem 3.2.7.1 & \href{https://kerodon.net/tag/00WV}{00WV}\\
Definition 3.5.5.1 & \href{https://kerodon.net/tag/053M}{053M}\\
Example 3.5.6.8 & \href{https://kerodon.net/tag/054B}{054B}\\
Construction 3.5.6.10 & \href{https://kerodon.net/tag/054D}{054D}\\
Corollary 3.5.6.16 & \href{https://kerodon.net/tag/054L}{054L}\\
Definition 3.5.7.1 & \href{https://kerodon.net/tag/054P}{054P}\\
Proposition 3.5.7.9 & \href{https://kerodon.net/tag/054V}{054V}\\
Example 3.5.7.24 & \href{https://kerodon.net/tag/055B}{055B}\\
Example 3.5.7.26 & \href{https://kerodon.net/tag/055D}{055D}\\
Theorem 4.4.4.4 & \href{https://kerodon.net/tag/01DK}{01DK}\\
Example 4.5.1.13 & \href{https://kerodon.net/tag/01E2}{01E2}\\
Remark 4.6.7.6 & \href{https://kerodon.net/tag/01LF}{01LF}\\
Corollary 4.7.6.24 & \href{https://kerodon.net/tag/06RT}{06RT}\\
Definition 4.7.7.9 & \href{https://kerodon.net/tag/057M}{057M}\\
Corollary 4.7.8.3 & \href{https://kerodon.net/tag/058S}{058S}\\
Definition 4.7.9.1 & \href{https://kerodon.net/tag/0597}{0597}\\
Example 4.7.9.6 & \href{https://kerodon.net/tag/059C}{059C}\\
Proposition 4.7.9.7 & \href{https://kerodon.net/tag/059D}{059D}\\
Corollary 4.7.9.16 & \href{https://kerodon.net/tag/059P}{059P}\\
Definition 4.8.4.8 & \href{https://kerodon.net/tag/01Q0}{01Q0}\\
Corollary 4.8.4.9 & \href{https://kerodon.net/tag/01Q1}{01Q1}\\
\end{longtable}

\section*{Acknowledgments}
I am deeply grateful to my advisor for patient guidance and thoughtful encouragement.
My heartfelt thanks also go to my family for their love, understanding, and unwavering support.
\section*{Tool use}
OpenAI GPT and DeepSeek models, together with Codex, were used to assist this work.
\bibliographystyle{abbrvurl}
\bibliography{references/references}
\enlargethispage{2\baselineskip}
\end{document}